\theoremstyle{plain}
      \newtheorem{theorem}{Theorem}
      \newtheorem{lemma}[theorem]{Lemma}
      \newtheorem{corollary}[theorem]{Corollary}
      \newtheorem{proposition}[theorem]{Proposition}
      \theoremstyle{definition}
     \theoremstyle{remark}
     \newtheorem{remark}[theorem]{Remark}
     \theoremstyle{Fact}
     \theoremstyle{Example}
     \newtheorem{exemple}[theorem]{Example}
\theoremstyle{notation}
\newcounter{Step}
\newcommand{\Lim}{\operatorname{Lim}}
\def\R{\mbox{I\hspace{-.15em}R} }
\def\Q{\mbox{l\hspace{-.47em}Q} }
\def\C{\hspace{.17em}\mbox{l\hspace{-.47em}C} }
\def\o{\otimes}
\begin{document}
\title[Time Reversal of free diffusions I]{Time Reversal of free diffusions I :\\ Reversed Brownian motion, Reversed SDE and first order regularity of conjugate variables}
\begin{abstract}
We show that solutions of free stochastic differential equations with regular drifts and diffusion coefficients, when considered backwards in time, still satisfy free SDEs for an explicit Brownian motion and drift. We also study the stochastic integral part with respect to this reversed free Brownian motion of canonical closed martingales. We deduce that conjugate variables computed along a free Brownian motion, an example of such a reversed martingale appearing in the definition of non-microstates free entropy, are in the $L^2$ domain of corresponding free difference quotients for almost every time.   
\end{abstract}


\author[Y. Dabrowski]{Yoann Dabrowski}\address{ 
Universit\'e de Lyon\\
Universit\'e Lyon 1\\
Institut Camille Jordan\\
43 blvd. du 11 novembre 1918\\
F-69622 Villeurbanne cedex\\
France}
\email{dabrowski@math.univ-lyon1.fr}
\thanks{Research partially supported by ANR Grant NEUMANN}

\subjclass[2000]{46L54, 60J60}
\keywords{Free diffusions, Free stochastic differential equations, non-commutative martingales, free entropy}
\date{}
\maketitle


\section*{Introduction}

In a fundamental series of papers, Voiculescu introduced analogs of entropy and Fisher information in the context of free probability theory. A first microstate free entropy $\chi(X_{1},...,X_{n})$ is defined as a normalized limit of the volume of sets of microstate i.e. matricial approximations (in moments) of the n-tuple of self-adjoints $X_{i}$ living in a (tracial) $W^{*}$-probability space $M$. Starting from a definition of a free Fisher information $\Phi^{*}(X_{1},...,X_{n})$ and computing it along a free Brownian motion, Voiculescu  \cite{Vo5} also defined a non-microstate free entropy $\chi^{*}(X_{1},...,X_{n})$, known by the fundamental work \cite{BGC} to be greater than the previous microstate entropy, and believed to be equal (at least modulo Connes' embedding conjecture). For more details, we refer the reader to the survey \cite{VoS} for a list of properties as well as applications of free entropies in the theory of von Neumann algebras.

Recently, the study of stationary free Stochastic Differential Equations (free SDEs) enabled progresses in either relating two  quantities derived from free entropy, free entropy dimensions $\delta_0(X_{1},...,X_{n})$ and $\delta^*(X_{1},...,X_{n})$ in the microstates and non-microstates pictures \cite{S07,Dab09}, or in obtaining direct applications of  finite free Fisher information to von Neumann algebras \cite{ID12}. The typical SDE solved in these approaches, first considered in \cite{S07} is
$$Y_{t}^{(i)}=Y_{0}^{(i)}-\frac{1}{2}\int_{0}^{t}\xi_{s}^{(i)}ds + S_{t}^{(i)}$$ where $\xi_{s}^{ (i)}$ is the i-th conjugate variable of $Y_{s}^{(i)}$'s in the sense of \cite{Vo5} (this is the free analogue of the score function), $S_{t}^{(i)}$ a free Brownian motion free with respect to $Y_{0}^{(i)}$. The first general problem is that very few cases where $\xi_{s}^{(i)}$ are Lipschitz are known, and the only known general solutions \cite{Dab10} are thus weak solutions, non-adapted to free Brownian motions. They are thus difficult to use in applications, for instance in \cite{ID12}, having strong solutions, or even a solution built in a free product is absolutely crucial to get the stronger applications~: absence of Cartan subalgebras.

The good property of having stationary solutions, enabling to build trace preserving homomorphism is thus payed by the price of very few examples known to be  well-behaved (since at this point a free version of recent results \cite{KryLp,FlanFed10} of strong solutions under weak regularity seams out of reach).  On the other hand, the definition of free entropy uses only $X_t^{(i)}=X_{0}^{(i)}+ S_{t}^{(i)}$ the simplest process, and instead of trying to modify the process considered, to make it reversible, it seems natural to try proving some reversibility of this given process. 

This is the spirit of the study of time reversal of diffusions in the classical case. Before summarizing the classical literature, one can already say that one expects (and we will prove later) that one needs to introduce the same conjugate variable to compensate the evolution of Brownian motion backwards in time. Precisely, if $\overline{X}_t^{(i)}=X_{T-t}^{(i)}$ and $\overline{\xi}_t^{(i)}$ the conjugate variable computed in this variable, we will prove (propositions \ref{RevSDE} and \ref{BrownianC}) there is a free Brownian motion adapted to a natural filtration $\overline{S}_{t}^{(i)}$ such that on $[0,T]$ :
$$\overline{X}_t^{(i)}=\overline{X}_0^{(i)}-\int_{0}^{t}\overline{\xi}_{s}^{(i)}ds + \overline{S}_{t}^{(i)}.$$

Since it is known from \cite{Vo5} that $\overline{\xi}_t^{(i)}$ is a martingale in the reversed filtration, the main application of the first paper in this series will be to the regularity of $\overline{\xi}_t^{(i)}$  deduced from computing the covariation of this martingale with stochastic integrals for $\overline{S}_{t}^{(i)}$. This will enable us to prove (see Corollary \ref{BrownianConj}) that for almost all $t$ , $\overline{\xi}_t^{(i)}\in D(\overline{\partial_t})$ the $L^2$ closure of the corresponding free difference quotient. We will give more applications to free entropy in the spirit of \cite{ID12} in the second part of this series.

Finally, since we are interested in more general processes than free Brownian motions (for instance the free liberation process of \cite{Vo6} is also crucial for free entropy applications and studied here in propositions \ref{liberationC} and corollary \ref{LiberationConj}), we will find rather general conditions to get an explicit SDE for the reversed process (cf. proposition \ref{RevSDE} and assumption (C) in section 2.1). 

Let us now explain the relation with classical works before summarizing the content of this paper.

Consider a solution on $[0,T]$ of a classical Markovian SDE driven by a classical Brownian motion $B_s$ :
$$X_{t}=X_{0}+ \int_0^tb(s,X(s))ds +\int_0^t\sigma(s,X(s))dB_s.$$

The problem of describing when $Y_t=X_{T-t}$ is not only a Markov process but also a diffusion solving the same kind of  SDEs was first raised by Nelson. 
His motivation came to model Quantum Mechanics (which is reversible) in Stochastic Mechanics. \cite{Nelson67,Nelson} found that formally, there should be a correction of the drift by appropriate score function, i.e. $Y_t$ should satisfy :
$$Y_{t}=Y_{0}+ \int_0^t\overline{b}(T-s,Y(s)))ds +\int_0^t\sigma(T-s,X(s))d\overline{B}_s,$$
with the new drift :
$$\overline{b}_j(T-s,y)=\frac{\sum_i\nabla_i((\sigma\sigma^*)_{ji}p_s)}{p_s}(y)-b_j(T-s,y),$$

where $p_t$ is the density with respect to Lebesgue measure of $(Y_{1,t},...,Y_{n,t}).$

This problem was solved mathematically after a series of contributions \cite{Anderson82,Follmer86,HaussmannPardoux,Pardoux} and the culmination is due to \cite{MNS89} where the reversed process is proved to satisfy the above reversed equation in the sense of a martingale problem as soon as the reversed drift makes sense  with the integrability required in order to formulate the martingale problem (see also \cite{JacodP} for later exploration in a more general L\'evy process context)

More interesting for us, in an earlier work, \cite{Pardoux} gave under stronger conditions an explicit formula for the Brownian motion driving the time reversed process~:
$$\overline{B}_t=B_{T-t}-B_T-\int_{T-t}^T\frac{\sum_i\nabla_i(\sigma_ip_s)}{p_s}(X_s)ds.$$
Even though he used there enlargement of filtration methods, one can also check this is a Brownian motion by a Levy Theorem known in the free case. This will be our approach. We will take the right assumptions in order to have a formula for the reversed Brownian motion, and then check it is indeed one by a free Levy characterization of free Brownian motion, the first version dating back to \cite{BGC}.

Let us finally summarize the content of this paper.

Section 1 gathers preliminaries about free stochastic integration and free SDEs. Since the reversed process is in general not known to be adapted to the filtration of the reversed Brownian motion, we will especially need Ito calculus beyond \cite{BS98} in this non-adapted context, even with bad continuity properties of the filtration. \cite{JungePerrin} is an inspiration there and later where non-commutative continuous time martingale techniques are needed. Section 1.2 will thus clarify this question of Ito formulas. Section 1.3 will fix our setting for SDEs and specify the stronger assumptions we need especially for the forward process. A straightforward time dependent free Ito formula necessary to use PDE techniques is written in section 1.4. Section 1.5 recalls basics on free entropy, appearing especially through properties of conjugate variables, to check our theory applies in the most basic examples.

Section 2 contains the main results about the construction of the reversed process.  We first check the expected process to be a reversed free Brownian motion in section 2.1. Our main assumption (C) used there contains either general properties for (variants of) conjugate variables along our starting diffusion process and the assumption we can solve some PDE that will be crucial to obtain martingales in the reversed filtration. This assumption does not always require the forward process to be a strong solution, but almost. In our basic examples, all those assumptions will be straightforward to check. 
These assumptions are modeled on what we can get in these  examples and in SDEs with regular coefficients we will study elsewhere.

Section 2.2  is rather technical, it summarizes some Dirichlet form techniques we will need, and prove the relation between forward and backward stochastic integrals in the case of regular biadapted integrands. Combined together, this enables us in section 2.3 to prove this relation in full generality in lemma \ref{RevStoInt} and then conclude that the reversed process satisfies the right SDE in proposition \ref{RevSDE}. The Dirichlet form techniques are especially important to make valid the reversed SDE until time $T$ (near time $0$ in the original sense of time) where the drift of the reversed process becomes typically more singular.

Section 3 deals with our first applications to free Brownian motion and liberation processes. Section 3.1 checks our assumption (C) in these cases, mostly gathering known results. Section 3.2 study the regularity of closed reversed martingales starting at the algebra $W^*(X_t)$ of a single time, especially its covariation with stochastic integrals with respect to the reversed free Brownian motion. Here again, our non-commutative Dirichlet form preliminaries are crucial to obtain regularity of the projection of these martingales on the space of stochastic integrals. This is what enables us to prove the stated regularity application to conjugate variables computed on free Brownian motion in section 3.3 Finally, section 3.4 gives an example of computation motivated by a question asked in free probability and demonstrating where the reversed SDE is useful in concrete applications. This is in the case of liberation of projections as in \cite{CollinsKemp}.

\section{Preliminaries}
\subsection{Terminology} 
We work with {\it  tracial} von Neumann algebras $(M,\tau)$, i.e. von Neumann algebras $M$  endowed with a faithful, normal, tracial state $\tau$. We denote by $\|x\|_2=\tau(x^*x)^{1/2}$  the  2-norm associated to $\tau$ and by $\|x\|$ the operator norm.
 We denote by $\mathcal Z(M)$ the {\it center} of $M$, by $\mathcal U(M)$ the {\it group of unitaries} of $M$ and by $(M)_1=\{x\in M|\hskip 0.02in\|x\|\leqslant 1\}$ the {\it unit ball} of $M$. We usually assume that $M$ is {\it separable}.



If $M$ and $N$ are tracial von Neumann algebras, then an $M$-$N$ {\it bimodule} is a Hilbert space $\mathcal H$ endowed with commuting normal $*$-homomorphisms $\pi:M\rightarrow \mathbb B(\mathcal H)$ and $\rho:N^{op}\rightarrow \mathbb B(\mathcal H)$. For $x\in M,y\in N$ and $\xi\in\mathcal H$ we denote $x\xi y=\pi(x)\rho(y)(\xi)$. If $M,N,P$ are tracial von Neumann algebras, $\mathcal H$ and $\mathcal K$ be $M$-$N$ and $N$-$P$ bimodules, respectively, then $\mathcal H{\otimes}_N\mathcal K$ denotes the {\it Connes tensor product}  endowed with the natural $M$-$P$ bimodule structure (see \cite{Po86}).

For a tracial *-algebra $M$, we will write $\mathcal{H}(M,\eta)$ the canonical $M-M$ bimodule associated to the completely positive map $\eta:M\to M$ (see e.g. \cite[section 1.1.2]{Po01}, when $M$ is a von Neumann algebra, $\eta$ will be assumed normal), i.e. the separation completion of $M\otimes M$ for the scalar product $\langle x\o y,x'\o y'\rangle=\tau(y^*\eta(x^*x')y').$ We will still call $1\o 1$ the image of this tensor in $\mathcal{H}(M,\eta)$.



\subsection{Free Brownian motion and stochastic integrals}
We recall here what is a free Brownian motion relative to a
subalgebra $B$ and a completely positive map $\eta:B\to B$ in our finite von Neumann algebra $(M,\tau).$

Thus $B$ is a fixed von Neumann sub-algebra with its canonical $\tau$-preserving conditional expectation $E_B$, $\eta:B\rightarrow B$ a unital completely positive map, assumed to be $\tau$-symmetric ($\tau(\eta(x)y)=\tau(x\eta(y)$) so that via Proposition 2.20 in \cite{SAVal99} the associated $B$-semicircular system is tracial. Given $B_{s}$ be an increasing filtration of von Neumann algebras $B\subset B_{0}$, we will call adapted B-free Brownian motion of covariance $\eta$ a family $S_s^j,j=1...,m,s\geq 0$ of adapted processes such that $(S_s^j)_{s\geq t}$ is Speicher's $B_s$-Gaussian stochastic processes $X_s^j$ with covariance given by $E_{B_{t}}((S_s^i-S_t^i)b(S_u^j-S_t^j))=((s\wedge u)-t)1_{\{i=j\}}\eta(E_{B}(b))$, for any $s,u\geq t$, $b\in B_{t}$. Stated otherwise in the notation of \cite{SAVal99} $W^{*}(B_t, S_s^j,s\geq t)=\Phi(B_t,\tilde{\eta}\circ E_B)$ where $\tilde{\eta}:B\rightarrow B(L^2([t,\infty))\o \C^m)\o B$ given by $\langle 1_{[ts)}\o e_i,\tilde{\eta}(b)(1_{[tu)}\o e_j)\rangle=((s\wedge u)-t)\eta(b)1_{\{i=j\}}$ with $(e_j)$ the canonical basis of $\C^m$. It is well known that in this context $W^*((S_s^j)_{s\geq t})$ is free with amalgamation over $B$ with $B_s$. 

We will need the following free Paul L\'evy's characterization of a free Brownian motion, proved in \cite{BGC} for the case $B=\C$ and in \cite{Dab10} for the general case, using crucially \cite{SpeicherHH}.
\begin{theorem}\label{FreeLevy}
Let $B_{s}$ be an increasing filtration of von Neumann algebras in a non-commutative tracial probability space $(M,\tau)$
$Z_{s}=(Z_{s}^{1},...,Z_{s}^{m}), s\in \R_{+}$ an m-tuple of self-adjoint processes adapted to this filtration $Z_{0}=0$ and~:
\begin{enumerate}
\item $E_{B_{s}}(Z_{t})=Z_{s}$
\item$Z_{t}-Z_{s}=U_{t,s}+V_{t,s}$ with $\tau(|U_{t,s}|^{4})\leq K (t-s)^{3/2}$ and $\tau(|V_{t,s}|^{2})\leq K (t-s)^{2}$ 
\item $\tau(Z_{t}^{k}AZ_{t}^{l}C)=\tau(Z_{s}^{k}AZ_{s}^{l}B)+(t-s)1_{\{k=l\}}\tau(A\eta(E_{B}(C)))+o(t-s)$ for any $A,C\in B_s$.
\end{enumerate}
Then $Z$ is a B-free Brownian motion of covariance $\eta$. 
\end{theorem}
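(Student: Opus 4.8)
The plan is to adapt the classical Paul L\'evy characterization to the amalgamated, operator-valued free setting, following the strategy of \cite{BGC} and \cite{Dab10}. The goal is to identify the joint moments of $Z$ with those of a $B$-free Brownian motion of covariance $\eta$, i.e. to show that the mixed $*$-moments of the increments $Z_t^j - Z_s^j$ interlaced with elements of the various $B_u$ coincide with the prescribed ones. Since $W^*(B_s, Z_u^j : u\ge s)$ is meant to equal the free Gaussian construction $\Phi(B_s,\tilde\eta\circ E_B)$, it suffices to check that the operator-valued (i.e. $E_B$-valued, after conditioning down the filtration) joint cumulants of $Z$ are semicircular: the pair cumulant is the prescribed covariance $((s\wedge u)-t)1_{\{i=j\}}\eta(E_B(\cdot))$ and all higher cumulants vanish.

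First I would fix a partition $0=t_0 < t_1 < \dots < t_n = t$ of a time interval and consider a word $Z^{k_1}_{u_1} A_1 Z^{k_2}_{u_2} A_2 \cdots$ with $A_i \in B_{s_i}$, then use the martingale property (1) together with the tower property of conditional expectations to reduce, by a telescoping argument, to controlling what a single small increment $Z_{t_{\ell+1}} - Z_{t_\ell}$ contributes. Hypothesis (3) is precisely the statement that, to first order in the mesh, a ``returning'' pair of increments over $[t_\ell, t_{\ell+1}]$ contributes exactly one factor of $(t_{\ell+1}-t_\ell)1_{\{k=l\}}\eta(E_B(\cdot))$ and that the indices must match; summing over the partition and letting the mesh go to zero turns these first-order contributions into the integral giving the semicircular covariance, by a Riemann-sum argument. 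Hypothesis (2), the $L^4$ bound $\tau(|U_{t,s}|^4)\le K(t-s)^{3/2}$ on the ``main'' part and the $L^2$ bound on a lower-order remainder, is what makes the combinatorics of higher-order terms negligible: any configuration in which three or more increments over the same mesh interval must be paired, or in which an increment is left ``unpaired'' against the conditional expectation, is shown to be $o(1)$ as the mesh shrinks, using (1) to kill unpaired increments in expectation and (2) plus non-commutative H\"older to bound the rest. This is exactly the mechanism by which, in the classical L\'evy theorem, the quadratic variation being $t$ and the higher variations vanishing forces a Gaussian; here one must be careful that all estimates are uniform in the $B_{s_i}$-valued ``test elements'' sitting between the increments, which is why the hypotheses are phrased with $A,C\in B_s$ rather than scalars.

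Concretely, once the mesh-refinement estimates are in place, I would argue that for every fixed word the limit of the Riemann sums is a sum over non-crossing pair partitions of the positions of the $Z$'s, with each pair $\{i,j\}$ (necessarily with $k_i = k_j$) contributing the factor $\int 1\,ds$ over the overlap of the relevant time-intervals composed with $\eta\circ E_B$ applied to everything nested strictly inside the pair --- i.e. precisely the Wick/moment-cumulant formula for the $B$-valued semicircular family with covariance $\tilde\eta\circ E_B$. The non-crossing structure emerges automatically from the nesting of conditional expectations dictated by (1) and the filtration. Matching this against the moments of $\Phi(B_0, \tilde\eta\circ E_B)$ (using Proposition 2.20 of \cite{SAVal99} to know the latter is tracial, so that comparing $\tau$-moments suffices), and invoking the freeness with amalgamation of $W^*((S^j_u)_{u\ge t})$ over $B$ with $B_t$ recalled above, identifies the law of $(Z_s)$ with that of an adapted $B$-free Brownian motion of covariance $\eta$.

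The main obstacle I expect is the bookkeeping in the mesh-refinement step: showing rigorously that \emph{all} terms beyond the non-crossing-pair contributions vanish in the limit, uniformly over the intervening $B_{s_i}$-elements, and handling the interplay between the two pieces $U_{t,s}$ and $V_{t,s}$ in hypothesis (2) (the $L^4$ versus $L^2$ asymmetry is tuned so that products of four $U$'s over one interval are $O((t-s)^{3/2}) = o(t-s)$ while $V$ is directly $o(t-s)$ in $L^2$). This is the genuinely technical heart; by contrast, extracting the covariance from (3) and recognizing the resulting combinatorial sum as the semicircular moment formula is comparatively routine. The amalgamation over $B$ adds notational weight --- one works with $E_B$-valued cumulants and must track which elements lie in which $B_{s_i}$ --- but does not change the structure of the argument, which is why citing \cite{SpeicherHH} for the operator-valued combinatorics and following the scalar case of \cite{BGC} is the natural route.
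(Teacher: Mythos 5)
The paper does not prove Theorem~\ref{FreeLevy}: it is quoted as a known result, proved in \cite{BGC} for $B=\C$ and in \cite{Dab10} for the general amalgamated case using \cite{SpeicherHH}, so there is no in-paper argument to compare your proposal against. Your sketch is a reasonable reconstruction of the moment-matching route taken in those references --- use (1) and the tower property to telescope, use (3) to pull out the pair contribution $(t-s)1_{\{k=l\}}\eta\circ E_B$, use (2) to show that all terms that are not non-crossing pairings vanish in the mesh-refinement limit, and then identify the limiting moments with those of $\Phi(B_0,\tilde\eta\circ E_B)$ --- but it remains schematic exactly where the content lies.

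Two points you should be aware of. First, the assertion that ``the non-crossing structure emerges automatically from the nesting of conditional expectations dictated by (1) and the filtration'' is doing a lot of work; in a L\'evy-type theorem, freeness of increments (equivalently, the non-crossing moment formula) is a \emph{conclusion}, not something that can be read off from adaptedness and martingality alone, and one has to show that the crossing pairings are killed by (2) in the limit, which is a separate estimate from killing higher-order (triple or more) clusters. Second, the arithmetic of why $(U,V)$ with the stated $L^4$/$L^2$ exponents suffices needs to be spelled out: after telescoping, one sums $\sim n$ terms over intervals of length $t/n$, with two increments from the same interval appearing together; $\|U\|_4 \lesssim (t/n)^{3/8}$ makes a paired $UU$ term $O((t/n)^{3/4})$ in $L^2$ (so the error against the $\eta E_B$ term from (3) must be shown small), while $\|V\|_2\lesssim t/n$ is directly absorbed, and terms with three or more increments in a single interval contribute $O(n\cdot (t/n)^{9/8})\to 0$. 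These are the estimates that occupy \cite{BGC} and \cite{Dab10}; your outline recognizes where they sit but does not carry them out, so the proposal is at the level of a plausible plan rather than a proof.
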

As in \cite{BS98} who wrote the case $B=B_0=\C$, we will consider stochastic integrals $\int_0^TU_s\#dS_s$ defined for $U\in L^2_{ad}([0,T],\mathcal{H}(M,\eta\circ E_B)^m)$ ($ad$ means $U_s\in \mathcal{H}(B_s,\eta\circ E_B)^m)$) and extending linearly isometrically the elementary stochastic integral defined for $U_s=1_{[r,t)}(s)\otimes e_i a_r\otimes b_r$, $a_r,b_r\in B_r$ ($e_i$ meaning the value in the i-th component)
by :

$$\int_0^TU_s\#dS_s:=a_r(S_t^{(i)}-S_r^{(i)})b_r.$$

(Note, as in \cite[p 18]{BGC} that linear combinations of such $U$'s are dense in $L^2_{ad}([0,T],\mathcal{H}(M,\eta\circ E_B)^m$ since they are first dense in continuous functions $C^{0}_{ad}([0,T],\mathcal{H}(M,\eta\circ E_B)^m)$ and those are dense because an adapted process $U_t$ is approximated by $U*\delta_n(s)=\int_0^{1/n}dx \delta_n(x)U_{s-x}$ the adapted time convolution with the continuous functions $\delta_n(x)=n\delta(nx)$ for $\delta$ positive continuous on $[0,1]$ of integral $1$.)

We will need consequences of non-commutative Burkholder-Gundy inequalities (without assumption on the filtration contrary to \cite{BS}). Fix $p\in[2,\infty[$. Let $\mathcal{B}_p^a$ the subspace of $L^2$ adapted processes 
such that $\int_0^T\langle U_s,U_s\rangle ds,\int_0^T\langle U_s^*,U_s^*\rangle ds\in L^{p/2}(M),$ 
with norm 
$$||U||_{\mathcal{B}_p^a}=\max\left(||\int_0^T\langle U_s,U_s\rangle ds||_{p/2},
||\int_0^T\langle U_s^*,U_s^*\rangle ds||_{p/2}\right).$$

Recall that when $U_s=\sum_i e_ia_i\o b_i$, $\langle U_s,U_s\rangle=\sum_ia_i^*\eta(E_B(b_i^*b_i))a_i$ and is then extend by isometry to $\mathcal{H}(M,\eta\circ E_B)^m$ with value in $L^1$ We write $\mathcal{H}_p(M,\eta\circ E_B)^m$ for the subspace where these products $\langle U,U\rangle$, $\langle U^*,U^*\rangle$ are in $L^{p/2}$, with norm $$\|U\|_{\mathcal{H}_p(M,\eta\circ E_B)^m}=\max(\|\langle U,U\rangle\|_{p/2},\|\langle U^*,U^*\rangle\|_{p/2}).$$ Note that $\int_0^T\langle U_s,U_s\rangle ds $ is also extended by isometry to $L^2_{ad}([0,T],\mathcal{H}(M,\eta\circ E_B)^m).$

We will need the following standard fact :

\begin{lemma}\label{simple}
Simple processes are  dense in $\mathcal{B}_p^a,$ $p\in[2,\infty[.$
\end{lemma}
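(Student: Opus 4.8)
The plan is to approximate an arbitrary $U\in\mathcal{B}_p^a$ in three stages: first by bounded adapted processes, then by continuous bounded adapted processes (via the same adapted time-convolution smoothing $U*\delta_n$ already used in the excerpt for $L^2_{ad}$), and finally by simple (elementary) processes obtained by sampling a continuous process on a fine partition of $[0,T]$. The point is that each stage must be shown to converge not just in the $L^2_{ad}$ norm but in the stronger $\mathcal{B}_p^a$ norm, i.e. with the square-function integrals $\int_0^T\langle U_s,U_s\rangle\,ds$ and $\int_0^T\langle U_s^*,U_s^*\rangle\,ds$ converging in $L^{p/2}(M)$.

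First I would handle the truncation step. Given $U\in\mathcal{B}_p^a$, the pointwise $\mathcal{H}(M,\eta\circ E_B)^m$-norm $s\mapsto\|U_s\|$ lies in a suitable space (since $\int_0^T\langle U_s,U_s\rangle\,ds\in L^{p/2}$), so I can replace $U_s$ by $U_s\,1_{\{\|U_s\|\le k\}}$ or, more robustly for the bimodule structure, multiply by a bounded adapted function of the form $f_k$ applied to a truncation built from $B_s$; dominated convergence in $L^{p/2}(M)$ for the square functions gives convergence in $\mathcal{B}_p^a$. Here one uses that $\langle U_s 1_E, U_s 1_E\rangle \le \langle U_s,U_s\rangle$ and $\int_0^T\langle U_s,U_s\rangle ds\in L^{p/2}$ together with the monotone/dominated convergence for the trace, plus the analogous bound for the adjoint square function. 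So we may assume $U$ is bounded and adapted.

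Next, for bounded adapted $U$, the smoothing $U*\delta_n(s)=\int_0^{1/n}\delta_n(x)U_{s-x}\,dx$ is again adapted (the convolution only looks backwards in time, as noted in the excerpt), is still bounded, and has continuous paths; by Jensen/convexity of the quadratic form $V\mapsto\langle V,V\rangle$ one controls $\int_0^T\langle (U*\delta_n)_s,(U*\delta_n)_s\rangle\,ds$ by $\int_0^T\langle U_s,U_s\rangle\,ds$ (up to edge terms), so the family stays bounded in $\mathcal{B}_p^a$, and $U*\delta_n\to U$ in $L^2_{ad}$; a standard real-variable argument (continuity of translation in $L^{p/2}$ of the square function, or interpolation between $L^2_{ad}$ convergence and the uniform $\mathcal{B}_p^a$ bound) upgrades this to $\mathcal{B}_p^a$ convergence. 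Finally, for continuous bounded adapted $U$, uniform continuity of $s\mapsto U_s$ lets the left-endpoint simple approximants $U^{(n)}_s=\sum_j U_{t_j}1_{[t_j,t_{j+1})}(s)$ converge uniformly, hence in $\mathcal{B}_p^a$, and each $U_{t_j}\in\mathcal{H}(B_{t_j},\eta\circ E_B)^m$ can itself be approximated in the $p$-norms by finite sums $\sum_i e_i a_i\otimes b_i$ with $a_i,b_i\in B_{t_j}$, yielding genuinely simple processes.

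The main obstacle I expect is \textbf{controlling the square-function integrals under smoothing and truncation in the $L^{p/2}(M)$ (rather than $L^2$) norm}: the quadratic map $U\mapsto\langle U,U\rangle$ is not linear, so convergence in $L^2_{ad}$ does not automatically give convergence of $\langle U_n,U_n\rangle$ in $L^{p/2}$, and one must combine a uniform bound (from Jensen applied to the completely positive structure, noting $\langle\cdot,\cdot\rangle$ takes values in $M_+$) with the soft convergence to conclude — essentially a vector-valued dominated-convergence plus uniform-integrability argument in the noncommutative $L^{p/2}$ space, and similarly for the adjoint square function $\langle U^*,U^*\rangle$, which must be tracked in parallel throughout.
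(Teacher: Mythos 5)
Your three-stage plan (truncate to bounded, smooth by backward convolution, discretize in time) is a genuinely different route from the paper's. The paper applies the adapted convolution $U*\delta_n$ directly, proves its uniform $\mathcal{B}_p^a$-contractivity, and then reduces $U*\delta_n\to U$ to checking convergence on a dense subspace that need \emph{not} consist of adapted processes. To obtain that density, it embeds processes into the free product $N=M*_BW(S,B)$ via $V(U)(s)=U_s\#S$, transports the square function to $\mathcal{E}(V(U)^*V(U))$ for a conditional expectation $\mathcal{E}$ onto $M$, truncates with the global, constant-in-$s$, non-adapted resolvent $(\alpha/(\alpha+\mathcal{E}(V(U)^*V(U))))^{1/2}$, and interpolates in the column module $L_p^c(N,\mathcal{E})$, running the row case symmetrically. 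This cleverly sidesteps the need for a single \emph{adapted} bounded approximant, because the auxiliary density can be stated for non-adapted processes and the columns and rows can be handled separately.

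Your proposal does need such an adapted bounded approximant, and there the gap lies. The scalar cutoff $U_s\,1_{\{\|U_s\|\le k\}}$ cannot work: membership in $\mathcal{B}_p^a$ only gives $\int_0^T\langle U_s,U_s\rangle\,ds\in L^{p/2}(M)$, and this does not force the module norm $\|U_s\|_{\mathcal{H}_\infty}$ to be finite for almost every $s$ --- a process constant in $s$, equal to a single unbounded element of $\mathcal{H}(M,\eta\circ E_B)$ whose square function lies in $L^{p/2}(M)\setminus M$, has $\|U_s\|_{\mathcal{H}_\infty}=\infty$ for every $s$, so the indicator is identically $0$. The variant you sketch, multiplying by an $f_k$ built from $B_s$, is the right instinct but must be two-sided and adapted: right multiplication by $g_{k,s}=(k/(k+\langle U_s,U_s\rangle))^{1/2}\in B_s$ gives $\langle U_sg_{k,s},U_sg_{k,s}\rangle\le k$ but leaves $\langle (U_sg_{k,s})^*,(U_sg_{k,s})^*\rangle$ dominated only by the unbounded $\langle U_s^*,U_s^*\rangle$, so one needs the simultaneous two-sided cutoff $f_{k,s}U_sg_{k,s}$ with $f_{k,s}=(k/(k+\langle U_s^*,U_s^*\rangle))^{1/2}$, followed by a noncommutative dominated-convergence argument (operator domination by $\langle U,U\rangle$ and $\langle U^*,U^*\rangle$ together with convergence in measure) to get $f_kUg_k\to U$ in $\mathcal{B}_p^a$. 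You name this obstacle in your closing paragraph, but you neither exhibit a correct two-sided truncation nor prove the required dominated convergence, so as written your first stage does not go through. Once that is patched, your remaining steps --- interpolating the $L^2$ convergence of the convolution against the uniform $\mathcal{H}_\infty$ bound, then piecewise-constant sampling and approximating $U_{t_j}$ by elementary tensors --- are sound and give an alternative, somewhat more elementary proof than the paper's free-product embedding.
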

\begin{proof}
As above for $L^2$, $U*\delta_n$ is in $C^{0}_{ad}([0,T],\mathcal{H}_p(M,\eta\circ E_B)^m)$. Note that in all our formulas, $U$ may be extended to $0$ for negative time and time larger than $T$. Indeed, adaptedness is obvious by choice of the support of $\delta$, and we can show continuity for non-adapted processes. Take $a,b\in M$  and bound using sesquilinearity and Cauchy-Schwarz : 
\begin{align*}&|\tau(a^*\langle\int_0^{1/n}dx\delta_n(x)U_{y-x}-\int_0^{1/n}dx\delta_n(x)U_{y'-x},\int_0^{1/n}dz\delta_n(z)U_{t-z}\rangle b)|
\\&\leq(\int_{\min(y,y')-1/n}^{\max(y,y')}dx(\delta_n(y-x)-\delta_n(y'-x)^2)^{1/2} (\int_{t-1/n}^tdx\delta_n(t-x)^2)^{1/2}\times \\&\ \ \ \ \ \  \ \ \ \ \ \  \ (\int_{y-1/n}^ydx\langle ||U_xa||_2^2)^{1/2}(\int_{t-1/n}^tdz\langle ||U_zb||_2^2)^{1/2}\end{align*}

Let $q$ with $2/p+2/q=1$ then taking a sup over $a,b\in L^{q}$ of norm $1$ one gets 
\begin{align*}&||\langle U*\delta_n(y)-U*\delta_n(y'),U*\delta_n(t)\rangle||_{p/2}\leq (\int_{\min(y,y')-1/n}^{\max(y,y')}dx(\delta_n(y-x)-\delta_n(y'-x)^2)^{1/2} \times\\ &(\int_{t-1/n}^tdx\delta_n(t-x)^2)^{1/2}||\int_{y-1/n}^ydx\langle U_s,U_s\rangle ds||_{p/2}^{1/2}||\int_{t-1/n}^tdx\langle U_s,U_s\rangle ds||_{p/2}^{1/2}.\end{align*}
Especially we got by duality (in the case y' really negative so that $U*\delta_n(y')=0$, and $t=y$) that the convolution is indeed in $\mathcal{H}_p(M,\eta\circ E_B)^m,$ and using also symmetric identities and continuity of $\delta$ one also gets continuity of the convolution.
Likewise, using $\int_0^Tdt ||U*\delta_n(t)a||_2^2\leq \int_0^Tdt\left(\int_{\R} \delta_n(y-x)||U_xa||_2\right)^2\leq\left( \int_{\R} \delta_n(y-x)\right)^2 \int_0^Tdt||U_xa||_2^2=\int_0^Tdt||U_xa||_2^2$ using a convolution inequality for real valued functions and the assumption on $\delta$, we get : $$||\int_0^Tdt\langle U*\delta_n(t),U*\delta_n(t)\rangle||_{p/2}\leq ||\int_{0}^Tdx\langle U_s,U_s\rangle ds||_{p/2}.$$

Since continuous process are easy to approximate by simple processes, it suffices to check  $U*\delta_n\to U$ in $\mathcal{B}^a_p.$
But by the continuity above of $U*\delta_n$ in $U$, uniformly in n, it suffices to check it on a dense space of $U$ of non-adapted processes (and do the same with the adjoint of $U$). This will be easier than the adapted case since we can use functional calculus. Verifying convergence is easy on the space of continuous (non adapted) processes since $U*\delta_n(s)-U(s)=\int_0^{1/n}dx\delta_n(x)(U_{s-x}-U_s)=\int_0^{1/n}dx(\tau_{-x}U-U)(s)$ and since $||(\tau_{-x}U-U)||_{B_{p}}\to 0$ for $U$ continuous. To prove density of continuous non-adapted processes the case $p=2$ by Hilbertian techniques (projection on finite dimensional space and reduction to the finite dimensional value $L^2$ space.

Thus let $U\in \mathcal{B}^a_p$ and $U_n \to U$  in $\mathcal{B}^a_2$ with $U_n$ continuous.

For convenience we use the following embedding.

Consider $N=M*_BW(S,B)$ where $S$ is a semicircular of covariance $\eta$

$U\mapsto (s\mapsto U_s\#S)=V(U)$ is clearly an $L^2$ isometry to $L^2([0,T],L^2(N))$ Let $\mathcal{E}$ the conditional expectation of $L^{\infty}([0,T],N)$ to $M$
Then $\int_0^T\langle U_s,U_s\rangle ds=\mathcal{E}(V(U)^*V(U))$ so that $V(U)\in L_p^c(N,\mathcal{E})$ (see e.g. \cite[section 1.3]{JungePerrin} for the main property of this $L^p$ $M$-module)

Thus let  $X_{n,\alpha}=V(U_n(\alpha/(\alpha+\mathcal{E}(V(U_n)^*V(U_n))))^{1/2})$ so that $||\mathcal{E}(X_{n,\alpha}^*X_{n,\alpha})||\leq \alpha$ so that one deduces from convergence to $X_{\alpha}=V(U(\alpha/(\alpha+\mathcal{E}(V(U)^*V(U))))^{1/2})$ in $L_2^c(N,\mathcal{E})$, which is easy using  $\mathcal{E}(V(U_n)^*V(U_n))\to \mathcal{E}(V(U)^*V(U))$ in $L^1$,  the convergence of $X_{n,\alpha}$ in $L_p^c(N,\mathcal{E})$ when $n\to \infty$ by interpolation. The continuity of $V^{-1}(X_{n,\alpha})$ from the one of $U_n$ is also easy. Thus the limit $V^{-1}(X_{\alpha})$ is in the closure of continuous functions, and letting $\alpha\to \infty$ one gets the density result.
\end{proof}
Let us now deduce the consequence for stochastic integrals :
\begin{lemma}\label{Bpa}
For any $U\in \mathcal{B}_p^a, $ we have $\int_0^TU_s\#dS_s\in L^p(M).$
\end{lemma}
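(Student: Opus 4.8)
The plan is to prove the quantitative estimate
$\|\int_0^T U_s\#dS_s\|_p\leq C_p\max\big(\|\int_0^T\langle U_s,U_s\rangle ds\|_{p/2}^{1/2},\|\int_0^T\langle U_s^*,U_s^*\rangle ds\|_{p/2}^{1/2}\big)$
with $C_p$ depending only on $p$, first for simple processes, and then to extend it by density; the case $p=2$ is immediate from the defining $L^2$-isometry, so assume $p>2$. Since $\mathcal{B}_p^a\subset L^2_{ad}([0,T],\mathcal{H}(M,\eta\circ E_B)^m)$, the integral $\int_0^T U_s\#dS_s$ is already an element of $L^2(M)$, and by Lemma \ref{simple} I may pick simple $U^{(n)}\to U$ in $\mathcal{B}_p^a$, hence also in $L^2_{ad}$. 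Once the estimate is known for simple processes, $(\int_0^T U^{(n)}_s\#dS_s)_n$ is Cauchy in $L^p(M)$, its $L^p$-limit coincides with its $L^2$-limit $\int_0^T U_s\#dS_s$, and the latter therefore lies in $L^p(M)$; this is the $B=\C$ estimate of \cite{BS98} transported to the amalgamated, non-continuous-filtration setting.

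For a simple $U$ subordinate to $0=t_0<\dots<t_N=T$, write $\int_0^T U_s\#dS_s=\sum_k d_k$ with $d_k=\int_{t_k}^{t_{k+1}}U_s\#dS_s=\sum_i a_{k,i}(S^{(i)}_{t_{k+1}}-S^{(i)}_{t_k})b_{k,i}$, $a_{k,i},b_{k,i}\in B_{t_k}$. From $E_{B_{t_k}}(S^{(i)}_{t_{k+1}}-S^{(i)}_{t_k})=0$ one gets $E_{B_{t_k}}(d_k)=0$, so $(d_k)$ is a martingale difference sequence for the (arbitrary, possibly discontinuous) discrete filtration $(B_{t_k})_k$, and the covariance identity $E_{B_{t_k}}((S^{(i)}_{t_{k+1}}-S^{(i)}_{t_k})\,c\,(S^{(j)}_{t_{k+1}}-S^{(j)}_{t_k}))=(t_{k+1}-t_k)1_{\{i=j\}}\eta(E_B(c))$ for $c\in B_{t_k}$ gives, after a short computation, that $\sum_k E_{B_{t_k}}(d_k^*d_k)$ and $\sum_k E_{B_{t_k}}(d_kd_k^*)$ are $\int_0^T\langle U_s,U_s\rangle ds$ and $\int_0^T\langle U_s^*,U_s^*\rangle ds$ (up to the ordering convention in the definition of $\langle\cdot,\cdot\rangle$). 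The non-commutative Burkholder/Burkholder--Gundy inequality for $p\in[2,\infty[$, valid for an arbitrary increasing filtration (Pisier--Xu, Junge--Xu; in the form convenient here see also \cite{JungePerrin}), then bounds $\|\sum_k d_k\|_p$ by $C_p$ times the sum of $\|(\int_0^T\langle U_s,U_s\rangle ds)^{1/2}\|_p$, $\|(\int_0^T\langle U_s^*,U_s^*\rangle ds)^{1/2}\|_p$ and a diagonal term $(\sum_k\|d_k\|_p^p)^{1/p}$; the first two summands are exactly $\|\int_0^T\langle U_s,U_s\rangle ds\|_{p/2}^{1/2}$ and $\|\int_0^T\langle U_s^*,U_s^*\rangle ds\|_{p/2}^{1/2}$.

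The step I expect to be the main obstacle is disposing of the diagonal term $(\sum_k\|d_k\|_p^p)^{1/p}$, which is not controlled by the $\mathcal{B}_p^a$-data uniformly in the partition. I would handle it by first proving the estimate for continuous processes valued in $\mathcal{H}_p(M,\eta\circ E_B)^m$ --- a class which, by the proof of Lemma \ref{simple}, contains the simple processes and is dense in $\mathcal{B}_p^a$. For such a $U$ and simple approximations along partitions with $\max_k(t_{k+1}-t_k)\to 0$, an operator-valued free Khinchin (one-step Burkholder--Gundy) estimate bounds $\|d_k\|_p$ by $\|(E_{B_{t_k}}(d_k^*d_k))^{1/2}\|_p+\|(E_{B_{t_k}}(d_kd_k^*))^{1/2}\|_p$, which is $O((t_{k+1}-t_k)^{1/2})$ uniformly in $k$ since $s\mapsto\langle U_s,U_s\rangle$ is then bounded in $L^{p/2}$; hence $\sum_k\|d_k\|_p^p=O\big((\max_k(t_{k+1}-t_k))^{p/2-1}\big)\to 0$. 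Passing to the limit --- the square functions converge in $L^{p/2}(M)$, the integrals in $L^2(M)$, and a bounded sequence in the reflexive space $L^p(M)$ with an $L^2$-limit has that limit in $L^p(M)$ --- yields the estimate for continuous $U$ with $C_p$ independent of $U$, and a final density step gives it, hence the lemma, for all $U\in\mathcal{B}_p^a$. Alternatively, one can bypass the diagonal term entirely by recognising $t\mapsto\int_0^t U_s\#dS_s$ as a continuous $L^2(M)$-valued martingale with bracket $\int_0^{\cdot}\langle U_s,U_s\rangle ds$ (and the adjoint version) and invoking the continuous-time non-commutative Burkholder--Gundy inequality of \cite{JungePerrin}, which carries no diagonal term.
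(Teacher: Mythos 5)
Your argument is correct in outline but takes a genuinely different route through the noncommutative Burkholder inequalities. The paper works with the \emph{unconditioned} Pisier--Xu Burkholder--Gundy inequality, for which the square function is $\sum_k d_k^*d_k$ (no conditional expectations and, crucially, no diagonal term for $p\in[2,\infty)$); the whole technical burden is then to show that this random quadratic variation converges in operator norm, along refining partitions, to the deterministic bracket $\int_0^T\langle U_s^*,U_s^*\rangle\,ds$, which the paper does via the free Rosenthal inequality of Junge. You instead invoke the \emph{conditioned} Burkholder--Rosenthal form (Junge--Xu), whose conditional square functions $\sum_k E_{B_{t_k}}(d_k^*d_k)$ already coincide with, or Riemann-sum to, the desired bracket, but which carries a diagonal term $(\sum_k\|d_k\|_p^p)^{1/p}$; you then kill that term along partitions of mesh $\to 0$ using a one-step operator-valued Khintchine bound and the gain $p/2-1>0$. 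Both choices are valid over an arbitrary filtration; the trade is essentially QV-in-$M$ convergence (paper) versus a vanishing diagonal term (yours), and in either case one must refine partitions and then extend by density (Lemma \ref{simple}). Two small inaccuracies: simple processes are not contained in the class of continuous $\mathcal{H}_p$-valued processes --- what you want is that the latter class is dense in $\mathcal{B}_p^a$, which the proof of Lemma \ref{simple} does give --- and your closing alternative via the continuous-time Burkholder--Gundy of \cite{JungePerrin} needs a regularity assumption on the filtration that is precisely what this lemma is designed to avoid, so it should be treated with care.
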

\begin{proof}
As in \cite[section 4.2]{BS98} one deduces $\int_0^TU_s\#dS_s\in L^p$ first for simple processes. 
Indeed, in order to prove from Pisier-Xu  non-commutative Burkholder-Gundy inequality that for any simple process, we have :
\begin{equation}\label{PisierXu}c_p||U||_{\mathcal{B}_p^a}\leq ||\int_0^TU_s\#dS_s||_p
\leq C_p||U||_{\mathcal{B}_p^a},\end{equation}
it suffices to check that quadratic variations of simple processes converge in $M$ to $\int_0^T\langle U_s^*,U_s^*\rangle ds$. As in the proof of Ito's formula in \cite{BS98} we first detail the convergence in $M$ for $a,b,c,d\in \mathcal{B}_s$ of $$\sum_{k=1}^na(S_{s+\frac{k+1}{n}(t-s)}-S_{s+\frac{k}{n}(t-s)}^{(i)})bc(S_{s+\frac{k+1}{n}(t-s)}-S_{s+\frac{k}{n}(t-s)}^{(j)})d\to (t-s)a\eta E_B(bc)d\delta_{\{i=j\}}.$$

We consider only the case $i=j$ and remove the indexes.
Since for any free Brownian motion (even if the filtration is not Brownian) $S_{s+\frac{k+1}{n}(t-s)}-S_{s+\frac{k}{n}(t-s)}$ and $\mathcal{B}_s$ are free with amalgamation over $B$, $u_k=(S_{s+\frac{k+1}{n}(t-s)}-S_{s+\frac{k}{n}(t-s)})(bc- E_B(bc))(S_{s+\frac{k+1}{n}(t-s)}-S_{s+\frac{k}{n}(t-s)})$ are free with amalgamation over $B_s$ and so are $v_k=(S_{s+\frac{k+1}{n}(t-s)}-S_{s+\frac{k}{n}(t-s)})E_B(bc)(S_{s+\frac{k+1}{n}(t-s)}-S_{s+\frac{k}{n}(t-s)})-\eta(E_B(bc))(t-s)/n).$ Since they have null conditional expectation on $B_s$ either by freeness with amalgamation or definition of the covariance Thus one can apply free Rosenthal inequality \cite[Proposition 7.1]{Junge05} with $E_{B_s}(u_k^*u_k)=\eta E_B[(bc- E_B(bc))\eta(1)(bc- E_B(bc))](t-s)^2/n^2$ and get :
$$||\sum_k (S_{s+\frac{k+1}{n}(t-s)}-S_{s+\frac{k}{n}(t-s)})(bc- E_B(bc))(S_{s+\frac{k+1}{n}(t-s)}-S_{s+\frac{k}{n}(t-s)})||\leq (4\frac{t-s}{n}+2\frac{t-s}{\sqrt{n}}).$$
The second bound is again as in \cite{BS98} using this time $E_{B_s}(v_k^*v_k)=\eta E_B[(bc- E_B(bc))\eta(1)(bc- E_B(bc))](t-s)^2/n^2$:
$$||\sum_k v_k||\leq ||E_B(bc)||(5\frac{t-s}{n}+2\frac{t-s}{\sqrt{n}}).$$
One deduces the desired convergence. 



From the density of simple processes and inequality \eqref{PisierXu}, stochastic integral extends to $\mathcal{B}_p^a$ and it has to agree with the case $p=2.$
\end{proof}
We now write Ito formula in this context :
\begin{proposition}\label{ItoLp}
With the notation above, take $U\in \mathcal{B}_p^a, W\in \mathcal{B}_q^a$, with $1/p+1/q=1/2$. Then If $X_s=\int_0^sU_v\#dS_v, Y_s\in \int_0^sW_u\#dS_u,$ $UY,XW\in \mathcal{B}_2^a$ and :
$$X_sY_s=\int_0^s (U_vY_v+X_vW_v)\#dS_v+\int_0^s\langle U_v,W_v\rangle dv,$$
\end{proposition}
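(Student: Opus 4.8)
The plan is to establish the Itô product formula first for simple (elementary) biadapted integrands, and then pass to the limit using the $L^p$-$L^q$-$L^2$ estimates already available. More precisely, I would first reduce to the case where $U$ and $W$ are simple processes of the form $U_s=1_{[r,r')}(s)\otimes e_i\,a\otimes b$ and $W_s=1_{[\rho,\rho')}(s)\otimes e_j\,c\otimes d$ with $a,b\in B_r$, $c,d\in B_\rho$, and then reduce further, by linearity and by refining the partitions, to integrands which are constant on a common interval $[s_k,s_{k+1})$. For such integrands $X_s,Y_s$ are honest finite sums of the form $\sum_k a_k (S^{(i_k)}_{s\wedge s_{k+1}}-S^{(i_k)}_{s\wedge s_k}) b_k$, and the identity $X_sY_s=\int_0^s(U_vY_v+X_vW_v)\#dS_v+\int_0^s\langle U_v,W_v\rangle\,dv$ becomes the discrete ``summation by parts'' telescoping: writing $\Delta_k X=X_{s_{k+1}}-X_{s_k}$ one has $X_sY_s=\sum_k(\Delta_k X)\,Y_{s_k}+\sum_k X_{s_k}(\Delta_k Y)+\sum_k (\Delta_k X)(\Delta_k Y)$. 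The first two sums are exactly the stochastic integrals $\int_0^s U_vY_v\#dS_v$ and $\int_0^s X_vW_v\#dS_v$ evaluated on the partition (here one uses that $Y_{s_k},X_{s_k}\in B_{s_k}$, so $U_vY_v$ and $X_vW_v$ are genuine biadapted integrands — this is where $UY,XW\in\mathcal B_2^a$ enters, and it follows from the Hölder-type bound $\|UY\|_{\mathcal B_2^a}\le\|U\|_{\mathcal B_p^a}\sup_s\|Y_s\|_\ast$ together with $Y_s\in L^q$ by Lemma~\ref{Bpa}).

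The only genuinely non-routine point is the cross term $\sum_k (\Delta_k X)(\Delta_k Y)$, which I expect to be the main obstacle. On each subinterval $[s_k,s_{k+1})$ it equals $a_k(S^{(i_k)}_{s_{k+1}}-S^{(i_k)}_{s_k})b_k c_k (S^{(j_k)}_{s_{k+1}}-S^{(j_k)}_{s_k})d_k$, and the claim is that, after refining the partition (mesh $\to 0$), this converges in the relevant norm to $\int_0^s\langle U_v,W_v\rangle\,dv=\int_0^s \sum 1_{\{i_v=j_v\}} a_v\,\eta(E_B(b_v^* {}'\,d_v))\cdots\,dv$ — more carefully, using $\langle 1\otimes b,1\otimes d\rangle$-type pairing, $\langle U_v,W_v\rangle = \sum_i \bar a_i^{\,*}\eta(E_B(\bar b_i^{\,*}\bar d_i))\bar c_i$ in the notation introduced before Lemma~\ref{Bpa}. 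This is precisely the computation carried out inside the proof of Lemma~\ref{Bpa}: by freeness with amalgamation over $B_s$ of the increments with $B_{s_k}$, one splits $b_kc_k$ into $E_B(b_kc_k)$ plus the centered part, applies the free Rosenthal inequality of \cite[Proposition 7.1]{Junge05} to control the centered contributions by $O((t-s)/\sqrt n)$, and identifies the expectation-of-the-product part with $(t-s)\eta(E_B(b_kc_k))/n$ summed over $k$, i.e. a Riemann sum for $\int\langle U,W\rangle\,dv$. Hence the cross term converges to the correct quadratic-covariation integral; I would simply invoke that computation rather than redo it.

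Finally I would remove the simplicity assumption by density. Given general $U\in\mathcal B_p^a$, $W\in\mathcal B_q^a$, choose simple $U^{(n)}\to U$ in $\mathcal B_p^a$ and $W^{(n)}\to W$ in $\mathcal B_q^a$ (Lemma~\ref{simple}). By Lemma~\ref{Bpa} and \eqref{PisierXu}, $X^{(n)}_s\to X_s$ in $L^p$ and $Y^{(n)}_s\to Y_s$ in $L^q$ uniformly in $s$, hence $X^{(n)}_sY^{(n)}_s\to X_sY_s$ in $L^2$. For the drift term, $\int_0^s\langle U^{(n)}_v,W^{(n)}_v\rangle\,dv\to\int_0^s\langle U_v,W_v\rangle\,dv$ in $L^1$ by the Cauchy–Schwarz-type bound $\|\int_0^s\langle U^{(n)},W^{(n)}\rangle-\langle U,W\rangle\,dv\|_1\le \|U^{(n)}-U\|_{\mathcal B_p^a}\|W^{(n)}\|_{\mathcal B_q^a}+\|U\|_{\mathcal B_p^a}\|W^{(n)}-W\|_{\mathcal B_q^a}$ (using sesquilinearity of $\langle\cdot,\cdot\rangle$ and Hölder). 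For the stochastic integral term, $U^{(n)}Y^{(n)}\to UY$ and $X^{(n)}W^{(n)}\to XW$ in $\mathcal B_2^a$ by the same Hölder estimates together with the uniform convergence $X^{(n)}\to X$ in $L^p$, $Y^{(n)}\to Y$ in $L^q$; then the $p=2$ isometry \eqref{PisierXu} gives convergence of $\int_0^s(U^{(n)}Y^{(n)}+X^{(n)}W^{(n)})\#dS_v$ in $L^2$. Passing to the limit in the identity valid for simple processes yields the proposition. (One subtlety to check along the way: that $UY$ is genuinely adapted, i.e. $U_vY_v\in\mathcal H(B_v,\eta\circ E_B)^m$; this holds because $Y_v\in B_v$ and the bimodule $\mathcal H(B_v,\eta\circ E_B)$ is a $B_v$-$B_v$ bimodule, so right multiplication by $Y_v$ preserves it — and likewise left multiplication by $X_v$ for $X_vW_v$.)
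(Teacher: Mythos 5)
Your overall plan coincides with the paper's: verify the identity for simple processes (with the cross term handled by the quadratic-variation computation already carried out inside Lemma~\ref{Bpa} via free Rosenthal), then pass to general $U,W$ by density and continuity of both sides in the norms $\mathcal{B}_p^a,\mathcal{B}_q^a$. The simple-process part is fine; the gap is in the continuity estimates, which you wave through as ``Hölder-type'' but which do not follow from pointwise Hölder. The point is that $\Vert U\Vert_{\mathcal{B}_p^a}^2=\bigl\Vert\int_0^T\langle U_v,U_v\rangle\,dv\bigr\Vert_{p/2}$ is a norm of the time-integral, whereas any pointwise-in-$v$ application of Hölder or module Cauchy--Schwarz followed by $\int\!dv$ gives a control by $\int_0^T\Vert\langle U_v,U_v\rangle\Vert_{p/2}\,dv$, which dominates the $\mathcal{B}_p^a$ norm (norm of an integral $\le$ integral of norms), not the other way around. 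So the inequality you need, e.g.\ $\Vert UY\Vert_{\mathcal{B}_2^a}\lesssim \Vert U\Vert_{\mathcal{B}_p^a}\Vert Y_T\Vert_q$, does not follow from $\tau(\langle U_v,U_v\rangle Y_vY_v^*)\le\Vert\langle U_v,U_v\rangle\Vert_{p/2}\Vert Y_v\Vert_q^2$ plus $\sup_v\Vert Y_v\Vert_q<\infty$.

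The paper closes this with two specific moves that you would have to add. For $UY$ (and symmetrically $XW$): use the martingale property $E_{B_v}(Y_T-Y_v)=0$ to get $\tau(\langle U_v,U_v\rangle Y_vY_v^*)\le\tau(\langle U_v,U_v\rangle Y_TY_T^*)$, which makes the second factor constant in $v$ and lets one integrate $\langle U_v,U_v\rangle$ inside the trace before applying Hölder, yielding exactly $\Vert UY\Vert_{\mathcal{B}_2^a}^2\le\Vert U\Vert_{\mathcal{B}_p^a}^2\Vert Y_T\Vert_q^2$. For the drift term, one cannot proceed pointwise either: the paper instead writes $\int_0^s\langle U_v,W_v\rangle\,dv=\mathcal{E}\bigl(V(U1_{[0,s]})^*V(W1_{[0,s]})\bigr)$ and invokes the conditional-expectation polar factorization $\mathcal{E}(X^*Y)=\mathcal{E}(X^*X)^{1/2}T\,\mathcal{E}(Y^*Y)^{1/2}$, $\Vert T\Vert\le 1$, from \cite[Prop.~2.2]{JungeLp}, applied to the total time-integrated quantities, and only then uses Hölder to get $\Vert\int_0^s\langle U_v,W_v\rangle\,dv\Vert_2\le\Vert U\Vert_{\mathcal{B}_p^a}\Vert W\Vert_{\mathcal{B}_q^a}$. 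Without these global (integrated-before-Hölder) arguments your passage to the limit is not justified.
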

\begin{proof}
$UY,XV$ are clearly adapted. One can compute when $U$ comes from a simple process  $||U_vY_v||_2^2= \tau( \langle U_v,U_v\rangle Y_vY_v^*)\leq \tau( \langle U_v,U_v\rangle Y_vY_v^*+(Y_T-Y_v)(Y_T-Y_v)^*)=\tau( \langle U_v,U_v\rangle Y_TY_T^*)$ where the martingale property is used in the last inequality, so that : $$||UY||_{\mathcal{B}_p^a}^2=\int_0^T||U_vY_v||_2^2dv \leq \tau( \int_0^Tdv\langle U_v,U_v\rangle Y_TY_T^*)\leq ||Y_T||_{q}^2||\int_0^Tdv\langle U_v,U_v\rangle||_{p/2}\leq C_q^2 ||U||_{\mathcal{B}_p^a}^2||V||_{\mathcal{B}_q^a}^2.$$

This inequality thus extends to the case where $U$ is not a simple process, and thus $UY\in \mathcal{B}_2^a$, the case of $XW$ s similar. Since from \cite[Proposition 2.2]{JungeLp} using the notation of our proof of lemma \ref{simple}, one can write for some $T\in M$, $||T||\leq 1$, $\int_0^s\langle U_v,W_v\rangle dv$  as:

\noindent $\mathcal{E}(V(U1_{[0,s]})^*V(W1_{[0,s]}))=\mathcal{E}(V(U1_{[0,s]})^*V(U1_{[0,s]}))^{1/2}T\mathcal{E}(V(W1_{[0,s]})^*V(W1_{[0,s]}))^{1/2}$ and one deduces from Hölder inequality : $$ ||\int_0^s\langle U_v,W_v\rangle dv||_2\leq ||U||_{\mathcal{B}_p^a}||V||_{\mathcal{B}_q^a}.$$

Thus by continuity of both sides of the identity for the norms $\mathcal{B}_p^a,\mathcal{B}_q^a,$ it suffices to get the simple process case, and the proof is similar to the one in \cite{BS98}, since we have already computed the quadratic variation in our previous lemma.

\end{proof}

We will also consider $\mathcal{B}_\infty^a$ (note the difference with \cite{BS98}) the completion of adapted simple processes for the norm $$||U||_{\mathcal{B}_\infty^a}^2:=\int_0^T||U_s||_{M\hat{\o}M}^2ds,$$ with $(a\o b)^*=b^*\o a^*.$ On this space the stochastic integral is expected to extend to continuous map valued in $M$, but this requires  at this stage supplementary assumptions as follows :

\begin{lemma}
Assume either $B=\C$ or the filtration is the Brownian filtration i.e. $B_s=W^*(S_t,t<s,B_0)$, then for any simple adapted process $U$  valued in $M\otimes M$:

\noindent $||\int_0^TU_s\#dS_s||\leq 3||U||_{\mathcal{B}_\infty^a}.$
Thus the stochastic integral extends to a bounded map $\mathcal{B}_\infty^a\to M.$
\end{lemma}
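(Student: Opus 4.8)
The plan is to mimic the proof of the $L^p$-version (Lemma \ref{Bpa}) but now exploit the two special hypotheses that make the stochastic integral land in $M$ rather than merely in $L^p(M)$. First I would reduce, by the definition of $\mathcal{B}_\infty^a$ as the completion of adapted simple processes, to establishing the bound $\|\int_0^T U_s\# dS_s\|\leq 3\|U\|_{\mathcal{B}_\infty^a}$ for simple adapted $U$ valued in $M\otimes M$; the extension statement is then immediate from density. For a simple process I would write the integral as a sum of martingale increments $m_k=a_{k}(S_{t_{k+1}}^{(i)}-S_{t_k}^{(i)})b_k$ over a common subdivision $0=t_0<\cdots<t_N=T$, with $a_k,b_k\in B_{t_k}$.

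The key point is that under either hypothesis the increments behave like a \emph{free} family over the relevant algebra, so that one can invoke an operator-norm (rather than $L^p$) version of the free Burkholder--Gundy / Rosenthal inequality. Concretely, when $B=\C$ the increments $S_{t_{k+1}}^{(i)}-S_{t_k}^{(i)}$ are freely independent semicirculars and free from $B_0$, hence from all the $a_k,b_k$; when the filtration is the Brownian filtration $B_s=W^*(S_t,t<s,B_0)$ the increment over $[t_k,t_{k+1})$ is free with amalgamation over $B_{t_k}$ (equivalently over $B$) from $B_{t_k}$, which is exactly the structure needed. In either case I would apply the operator-valued free martingale inequality of Junge (the $L^\infty$, i.e.\ $\mathbb{B}(H)$-valued, endpoint of \cite[Proposition 7.1]{Junge05}, or equivalently the Haagerup-type inequality for $R+C$-bounded free martingale difference sequences) to the sum $\sum_k m_k$, obtaining
\[
\Bigl\|\sum_k m_k\Bigr\| \leq C\max\Bigl(\bigl\|\sum_k E_{B}\!\bigl(m_k^* m_k\bigr)\bigr\|^{1/2},\ \bigl\|\sum_k E_{B}\!\bigl(m_k m_k^*\bigr)\bigr\|^{1/2},\ \max_k\|m_k\|\Bigr),
\]
and then identify the first two terms with the column/row quadratic variations. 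Using $E_B(m_k^* m_k)=b_k^*\,\eta(E_B(a_k^*a_k))\,b_k\,(t_{k+1}-t_k)$ (and the symmetric expression for $m_k m_k^*$), these sums are precisely Riemann sums for $\int_0^T\langle U_s,U_s\rangle\,ds$ and $\int_0^T\langle U_s^*,U_s^*\rangle\,ds$, both bounded in norm by $\|U\|_{\mathcal{B}_\infty^a}^2$; the $\max_k\|m_k\|$ term is controlled by $\sup_k\|a_k\|\|b_k\|\|S_{t_{k+1}}^{(i)}-S_{t_k}^{(i)}\|$ which is $O(\sqrt{\max_k(t_{k+1}-t_k)})\cdot\|U\|_{\mathcal{B}_\infty^a}$-type small, so it does not survive in the final constant. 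Tracking the constants carefully (this is where the explicit $3$ comes from, analogously to the $4+2/\sqrt n$ and $5+2/\sqrt n$ bounds appearing in the proof of Lemma \ref{Bpa}) yields the stated inequality.

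The main obstacle is getting the \emph{operator-norm} martingale inequality in the amalgamated setting with the right constant, since the cited Rosenthal/Burkholder-Gundy inequalities in \cite{Junge05} are stated for $L^p$ and one needs the $p=\infty$ endpoint, which is where freeness (not mere martingale structure) is essential — this is exactly why the hypothesis restricts to $B=\C$ or the Brownian filtration, the two cases where the increments are genuinely free (with amalgamation over $B$, resp.\ over $\mathbb{C}$) from the past. A secondary technical point is that $B_{t_k}$ need not be $E_B$-compatible with $B$ in the non-scalar case unless the Brownian filtration hypothesis is in force, so one must check that the freeness-with-amalgamation-over-$B$ statement for $W^*((S_s^j)_{s\geq t})$ recalled in Section 1.2 does deliver the needed conditional-expectation identities; this is routine given the setup. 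Once the simple-process bound is in hand, density of simple processes in $\mathcal{B}_\infty^a$ and boundedness of the integral operator give the extension to all of $\mathcal{B}_\infty^a$ with values in $M$.
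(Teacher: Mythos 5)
Your plan hinges on applying a free Rosenthal/Burkholder--Gundy inequality, at the operator-norm endpoint, to the sum of martingale increments $m_k=a_k(S_{t_{k+1}}^{(i)}-S_{t_k}^{(i)})b_k$. That is where the argument breaks: the $m_k$ do not form a free family (with or without amalgamation). The increments $S_{t_{k+1}}^{(i)}-S_{t_k}^{(i)}$ alone are free with amalgamation over $B$ from the past, but the moment you conjugate by $a_k,b_k\in B_{t_k}$ (which in general contain earlier increments) the resulting $m_k$ are merely martingale differences in a free product filtration, not freely independent elements. The free Rosenthal inequality of \cite[Proposition 7.1]{Junge05} applies to free families, and the paper only invokes it in that setting --- namely for the quadratic variation increments $u_k,v_k$ over a single interval with \emph{fixed} side-coefficients $b,c\in B_s$, so that the $u_k$ really are free over $B_s$. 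There is no "$p=\infty$ endpoint of Rosenthal for free martingale differences'' that you can simply quote; the $L^\infty$ Burkholder--Gundy bound for free (operator-valued) martingales is a substantive theorem, not an instance of Rosenthal for free families, and the constant is not obtained by your bookkeeping either.

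The paper's actual argument does not redo any Rosenthal-type estimate. In the Brownian-filtration case it cites \cite[Corollary 1.10]{JRS}, which \emph{is} the operator-norm Burkholder--Gundy inequality for such stochastic integrals, and then merely checks pointwise that $\|m\circ(1\otimes\eta\circ E_B\circ m\otimes1)(U_s^*\otimes U_s)\|\le\|U_s\|_{M\hat\otimes M}^2$ via a supremum over unit vectors. In the $B=\C$ case it does not go through martingale differences at all but follows the coarse-correspondence construction of \cite[Th.~3.2.1]{BS}, representing the integral as an operator on $L^2(M)\otimes L^2(M)$ and estimating directly there (and obtains the sharper $2\sqrt2$). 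Both of those devices supply the $L^\infty$ bound that your plan leaves as a gap. Your reduction to simple processes, the formulas for $E_B(m_k^*m_k)$, and the identification with Riemann sums for $\int\langle U_s,U_s\rangle\,ds$ are all fine, but they sit downstream of the inequality you never actually have.
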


\begin{proof}
It is proved in \cite[Corollary 1.10]{JRS} that  in the second case :$$||\int_0^TU_s\#dS_s||\leq||\int_0^Tm\circ (1\otimes (\eta\circ E_B\circ m)\o 1)(U_s^*\otimes U_s)ds||^{1/2}+||\int_0^Tm\circ (1\otimes (\eta\circ E_B\circ m)\o 1)(U_s\otimes U_s^*)ds||^{1/2}.$$

We have moreover the following concluding inequality : \begin{align*}||m\circ (1\otimes (\eta\circ E_B\circ m)\o 1)(U_s^*\otimes U_s)||&=\sup_{||\xi||_2\leq 1}\langle \xi,m\circ (1\otimes (\eta\circ E_B\circ m)\o 1)(U_s^*\otimes U_s)\xi\rangle \\&=\sup_{||\xi||_2\leq 1}|| U_s1\otimes \xi\|_{\mathcal{H}(M,\eta\circ E_B)}\leq ||U_s||_{M\hat{\o}M}^2.\end{align*}

In the case $B=\C$, one can follow the proof of \cite[Th 3.2.1]{BS} without assuming a strong assumption on the filtration using only the property of the coarse correspondence. In this case we actually prove $$||\int_0^TU_s\#dS_s||\leq 2\sqrt{2}\int_0^T||U_s||_{M\overline{\o}M^{op}}^2ds.$$
\end{proof}

From this point, one can easily adapt the proof of Ito Formula from \cite{BS} to this context (using \cite[Proposition 7.1]{Junge05} instead of \cite{Vo87}), we only state the result and leave the proof to the reader.

\begin{proposition}\label{ItoSimple}
Assume either $B=\C$ or the filtration is the Brownian filtration. Take $U\in L^2_{ad}([0,T],\mathcal{H}(M,\eta\circ E_B)^m)$ $V\in \mathcal{B}_\infty^a$. Then If $X_s=\int_0^sU_v\#dS_v, Y_s\in \int_0^sV_u\#dS_u,$ $UY,YU,XV,VX\in L^2_{ad}([0,T],\mathcal{H}(M,\eta\circ E_B)^m)$ and :

$$X_sY_s=\int_0^s (U_vY_v+X_vV_v)\#dS_v+\int_0^sm\circ (1\otimes (\eta\circ E_B\circ m)\o 1)(U_v\otimes V_v)dv,$$
$$Y_sX_s=\int_0^s (Y_vU_v+V_vX_v)\#dS_v+ +\int_0^Tm\circ (1\otimes (\eta\circ E_B\circ m)\o 1)(V_v\otimes U_v)dv.$$
Moreover, the quadratic variation converges in $L^2$ :
$$\sum_{k=1}^n(X_{(k+1)s/n}-X_{ks/n})(Y_{(k+1)s/n}-Y_{ks/n})\to_{n\to \infty}\int_0^sm\circ (1\otimes (\eta\circ E_B\circ m)\o 1)(U_v\otimes V_v)dv.$$
\end{proposition}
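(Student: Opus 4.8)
The plan is to reduce everything to the simple-process case and then mimic the Biane--Speicher argument, using the results already established. First I would observe that all four assertions are linear and continuous in $(U,V)$ for the topologies in which they are stated: the maps $U\mapsto X_s=\int_0^sU_v\#dS_v$ and $V\mapsto Y_s=\int_0^sV_v\#dS_v$ are, by Lemma \ref{Bpa} (with $p=2$) and the preceding lemma, bounded into $L^2_{ad}$ and into $M$ respectively. I would first check the regularity claims $UY,\,YU,\,XV,\,VX\in L^2_{ad}([0,T],\mathcal{H}(M,\eta\circ E_B)^m)$: adaptedness is immediate since both $X$ and $Y$ are adapted; for integrability one writes $\|U_vY_v\|_{\mathcal{H}}^2=\tau(\langle U_v,U_v\rangle Y_vY_v^*)\le \|Y_v\|_{\infty}^2\,\langle U_v,U_v\rangle$-type bounds, using $\|Y_v\|\le \|Y\|_{\mathcal{B}_\infty^a}$-control from the previous lemma (in the case $B=\C$ or Brownian filtration, which is exactly our standing hypothesis), so that $\int_0^T\|U_vY_v\|_{\mathcal{H}}^2\,dv\le \|Y\|^2\,\|U\|_{L^2_{ad}}^2<\infty$; the cases $XV,VX$ are symmetric, using instead that $V\in\mathcal{B}_\infty^a$ gives $\|V_v\|_{M\hat\otimes M}$ square-integrable and $X_v\in L^2$.

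Next I would establish the quadratic variation convergence
$$\sum_{k=1}^n(X_{(k+1)s/n}-X_{ks/n})(Y_{(k+1)s/n}-Y_{ks/n})\to \int_0^sm\circ (1\otimes (\eta\circ E_B\circ m)\otimes 1)(U_v\otimes V_v)\,dv$$
in $L^2$ for simple $U,V$. Here the model computation is precisely the one carried out in the proof of Lemma \ref{Bpa}: on a common refinement of the partitions of $U$ and $V$, each increment $S_{(k+1)s/n}^{(i)}-S_{ks/n}^{(i)}$ is free with amalgamation over $B_{ks/n}$ from the coefficients, so the cross terms split as in that lemma into a ``main part'' $(s/n)\,\eta(E_B(\cdot))\delta_{\{i=j\}}$ plus centered fluctuations $u_k,v_k$ that are free with amalgamation over $B_{ks/n}$ with vanishing conditional expectation; then free Rosenthal (\cite[Proposition 7.1]{Junge05}) bounds the $L^2$-norm of $\sum u_k$ and $\sum v_k$ by $O(n^{-1/2})$. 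Passing from the operator-valued bilinear form to $m\circ(1\otimes(\eta\circ E_B\circ m)\otimes 1)(U_v\otimes V_v)$ is just bookkeeping of how the coefficients of $U$ and $V$ sit on either side of the increments. With the quadratic variation in hand, the two Ito identities for $X_sY_s$ and $Y_sX_s$ follow for simple $U,V$ by the standard telescoping/Abel-summation argument of \cite[Th 3.2.1]{BS}, writing $X_sY_s=\sum_k (X_{(k+1)}-X_{k})Y_{k}+X_{k}(Y_{(k+1)}-Y_{k})+(X_{(k+1)}-X_{k})(Y_{(k+1)}-Y_{k})$, recognizing the first two sums as Riemann sums of the stochastic integrals $\int (U_vY_v+X_vV_v)\#dS_v$ resp.\ $\int(Y_vU_v+V_vX_v)\#dS_v$, and the third as the quadratic variation. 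Finally I would remove the simplicity assumption: approximate $U$ in $L^2_{ad}$ by simple processes and $V$ in $\mathcal{B}_\infty^a$ by simple processes (Lemma \ref{simple} or its $p=2,\infty$ analogues), use the continuity estimates from the first paragraph together with Lemma \ref{Bpa} and the preceding boundedness lemma to pass to the limit in each term of both identities and in the quadratic variation.

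The main obstacle is the second step in its non-adapted-coefficient generality together with the need to control $\|X_v\|_{L^2}$ and $\|Y_v\|$ simultaneously in the regularity/continuity estimates: one must be careful that the bounds are uniform along the approximating sequences and that the quadratic-variation convergence is genuinely in $L^2$ (not merely in $M$ as for the scalar coefficients $a,b,c,d$ treated in Lemma \ref{Bpa}), which forces one to control the fluctuation terms $\sum u_k,\sum v_k$ in $L^2$ rather than in operator norm. Given that \cite{Junge05} supplies free Rosenthal with the right $L^2$ estimate and that the boundedness $\mathcal{B}_\infty^a\to M$ has just been proved under our hypothesis, this is routine, and I would leave the detailed bookkeeping to the reader exactly as the statement indicates.
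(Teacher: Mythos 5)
Your proposal matches exactly what the paper intends: the paper explicitly omits the proof, stating only that one should ``adapt the proof of Ito Formula from \cite{BS} to this context (using \cite[Proposition 7.1]{Junge05} instead of \cite{Vo87})'' and leaving the details to the reader. Your proof is a correct, honest filling-in of that blueprint—reduce to simple processes, compute the quadratic variation by decomposing into free centered fluctuations controlled via free Rosenthal, telescope as in Biane--Speicher, and extend by the $L^2_{ad}$-continuity of $U\mapsto X$, the $\mathcal{B}^a_\infty\to M$ boundedness just established for $V\mapsto Y$, and the elementary module estimates $\|U_vY_v\|_{\mathcal{H}}\le\|Y_v\|\,\|U_v\|_{\mathcal{H}}$ and $\|X_vV_v\|_{\mathcal{H}}\lesssim\|X_v\|_2\,\|V_v\|_{M\hat\otimes M}$.
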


\subsection{Setting for free SDEs and free markovianity}

We will always consider the following setting when we suppose given solutions of free SDEs

\begin{minipage}{15,5cm} \textbf{Assumption (A)}:
Let $B\subset A_1,...,A_n$; $\alpha_{0}(A_1),...,\alpha_{0}(A_n)\subset M$ be *-subalgebras algebraically independent over $B$, a von Neumann subalgebra in $(M,\tau)$, for a $*$-homomorphism $\alpha_0$ injective on $B$. $A=Alg(A_1,...,A_n)$ the (abstract free) algebra (over B) they generate with a canonical extension $\alpha_0$.
We consider given in $(M,\tau)$ p $B$-free Brownian motion $S_t=(S_t^{(1)},...,S_t^{(p)})$ of covariance $\eta$, free from $M$ with amalgamation over $B$ adapted to a filtration $\mathcal{F}_t\supset W^*(\alpha_0(A),S_s,s\leq t)$

\begin{enumerate}
\item Assume given a densely defined real derivation $\delta:A\to (A\otimes  A)^p$ (real means $\delta_i(x^*)=(\delta_i(x))^*$ with $(a\otimes b)^*=b^*\o a^*$), $\delta(B)=0$.
\item We assume given a family of *-homomorphisms $\alpha_t:A\to \mathcal{F}_t, t\in [0,T].$
\item We also fix operators $\Delta_{Q,s}:A\to L^{\infty}(A,\tau\circ\alpha_s)$ such that $\Delta_{Q,s}(x^*)=\Delta_{Q,s}(x)^*$ of the form $\Delta_{Q,s}(x)=\tilde{\delta}_s(x)+\frac{1}{2}\sum_i m\circ 1\otimes \eta \circ E_B\circ\alpha_s\otimes 1(\delta_i\otimes 1\delta_i(x)+1\o\delta_i\delta_i(x)),$ with $\tilde{\delta}=\sum_j\delta_j(x)\#Q_{j,s}:A\to L^{\infty}(A,\tau\circ\alpha_s)$ a derivation, with drift $Q_{j,s}\in L^{\infty}(A,\tau\circ\alpha_s)$ such that $s\mapsto \alpha_s(Q_{j,s})\in L^1([0,T],L^q(M))$ for some $q\in[1,\infty)$ (note also $\tilde{\delta}(B)=0$).
\item We assume without loss of generality $\mathcal{F}_t=W^*(\alpha_s(A),S_s,s\in[0,t]))$ so that again without loss of generality $\alpha_t$ extends to $L^{\infty}(A,\tau\circ\alpha_s)$. Moreover we assume $\alpha_t$ satisfies for any $X\in A:$
$$\alpha_t(X)=\alpha_0(X)+\int_0^tds\alpha_s(\Delta_{Q,s}(X))+\int_0^t\alpha_s\otimes\alpha_s(\delta(X))\#dS_s.$$
\end{enumerate}
\end{minipage}

\begin{minipage}{15,5cm} \textbf{Assumption (B)}:

\begin{enumerate}
\item Assume Assumption (A) with $Q_j=Q_{j,s}\in A$ ( independent of $s$ and so that we can define $\Delta_{Q,s}:A\to A$).
\item We assume either $B=\C$  and the solution is freely markovian, i.e. for any t, $W^*(\alpha_s(A),S_s,s\in[0,T])=W^*(\alpha_s(A),S_s,s\in[0,t])*_{W^*(\alpha_t(A))}W^*(\alpha_s(A),S_s-S_t,s\in[t,T])$ or (if $B\neq \C$) the solution is a strong solution, i.e. for any $t\leq \tau\leq T$ : $W^*(\alpha_s(A),S_s-S_t,s\in[t,\tau])=W^*(\alpha_t(A),S_s-S_t,s\in[t,\tau])$ so that  $\mathcal{F}_t= W^*(A,S_s,s\leq t).$
\end{enumerate}
\end{minipage}

An application of Ito's formula 
 shows that the generator is compatible with the *-homomorphism property.

\begin{exemple}\label{BrownianDiffusion} We have a first typical example in free probability where $A=B\langle X_1,...,X_n\rangle$ (i.e. $A_i=B\langle X_i\rangle$) $\delta=(\partial_1,...,\partial_n)$ is the free difference quotient such that $\partial_i(X_j)=1\o 1_{\{i=j\}}$. In this case, the equation in assumption (A) can be written :
$$\alpha_t(X_i):=X_{i,t}=\alpha_0(X_i)+\int_0^tdsQ_i(X_{s})+S_t^{(i)}.$$
A strong solution can be obtained for small time as in \cite{BS} at least for $T$ small enough.
\end{exemple}
\begin{exemple}\label{liberation}We have a second typical example in free probability for $A_i=Alg(B,B_i)$ (whithout loss of generality $B_i$ is an algebra) as above $\delta=(\delta_1,...,\delta_n)$ is the  derivation such that $\delta_k(a_j)=-i(a_j\o 1_{\{k=j\}}- 1_{\{k=j\}}\o a_j),a_j\in B_j, \delta_k(b)=0,b\in B$($i=\sqrt{-1}$)
. In this case, the equation of assumption (A.4) can be written for $a_j\in B_j$:
$$\alpha_t(a_j)=\alpha_0(a_j)+\int_0^tds\alpha_s(i[Q_j,a_j]-a_j+\eta(E_B(\alpha_s(a_j))))+i\int_0^t[dS_s^{(j)},\alpha_s(a_j)].$$

In this case, when $Q_j=0$, the  strong solution (necessarily unique by linearity of the equation in this case) can be obtained using free unitary Brownian motions, i.e. solution $U_t^{j}$ of the linear SDE :
$$U_t^{j}=1-\frac{1}{2}\int _0^tU_s^{j}ds+i\int_0^tdS_s^{(j)}U_s^j.$$
An easy application of Ito formula gives $\alpha_t(a_j)=U_t^{j}\alpha_0(a_j)U_t^{j*}, a_j\in B_j$ is a strong solution which extends to $*$-homomorphism solution on $A$ with $\alpha_t(b)=\alpha_0(b),b\in B$
\end{exemple}

For consistency purposes, we check that the strong solution assumption is stronger than the free markovianity assumption made when $B=\C$ :

\begin{proposition}\label{StrongSolution}
In the setting of assumption (A), a strong solution is always freely markovian and its filtration is continuous.
\end{proposition}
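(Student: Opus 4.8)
The plan is to show that a strong solution is freely Markovian in the sense of Assumption (B)(2), and then deduce continuity of the filtration. The first step is to unwind the strong solution hypothesis: for all $t\le \tau\le T$ we have $W^*(\alpha_s(A),S_s-S_t, s\in[t,\tau]) = W^*(\alpha_t(A),S_s-S_t, s\in[t,\tau])$, and in particular (taking $t=0$, and using $\mathcal{F}_t\supset W^*(\alpha_0(A),S_s,s\le t)$ together with Assumption (A)(4)) this yields $\mathcal{F}_t = W^*(A,S_s,s\le t) = W^*(\alpha_0(A),S_s,s\le t)$. So the filtration is generated by the initial algebra and the free Brownian motion increments. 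I would first record this reduction carefully, since it converts all subsequent statements into statements purely about $\alpha_0(A)$ and the process $S$.

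The second step is the free independence input. Since $S$ is a $B$-free Brownian motion free from $M$ with amalgamation over $B$, and since $\alpha_0(A)\subset M$, the increments $(S_s-S_t)_{s\ge t}$ are free with amalgamation over $W^*(\alpha_0(A), S_u, u\le t)$ from that algebra — this is exactly the freeness-with-amalgamation property of Speicher's $B_s$-Gaussian processes recalled in Section 1.2 (``$W^*((S_s^j)_{s\ge t})$ is free with amalgamation over $B$ with $B_s$''). Combined with the strong solution identity $W^*(\alpha_s(A),S_s,s\in[t,T]) = W^*(\alpha_t(A), S_s-S_t, s\in[t,T])$ (which follows by induction on a partition of $[t,T]$ from the strong solution property applied on successive subintervals, feeding $\alpha_\tau(A)$ back in as the new initial condition), one gets that the ``future'' algebra $W^*(\alpha_s(A),S_s,s\in[t,T])$ and the ``past'' algebra $\mathcal{F}_t = W^*(\alpha_s(A),S_s,s\le t)$ sit in free position with amalgamation over $W^*(\alpha_t(A))$. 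This is precisely the amalgamated free product decomposition
$$W^*(\alpha_s(A),S_s,s\in[0,T])=\mathcal{F}_t *_{W^*(\alpha_t(A))} W^*(\alpha_s(A),S_s-S_t,s\in[t,T]),$$
which is the free Markov property in Assumption (B)(2).

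For the last step, continuity of the filtration: I would show $\mathcal{F}_t = \bigcap_{u>t}\mathcal{F}_u$ and $\mathcal{F}_t = \overline{\bigcup_{u<t}\mathcal{F}_u}$. Using $\mathcal{F}_t = W^*(\alpha_0(A), S_s, s\le t)$, right-continuity follows from the $L^2$-continuity of $s\mapsto S_s$ (so $S_u$ for $u$ slightly bigger than $t$ is approximated in $\|\cdot\|_2$, hence strongly, by elements already in $\mathcal{F}_t$ plus a small increment that becomes negligible), together with the fact that the conditional expectation $E_{\mathcal{F}_t}$ onto the free-product factor annihilates the centered increments $S_u - S_t$; so nothing new is added in the limit. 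Left-continuity is the analogous, easier statement. The main obstacle, I expect, is the bookkeeping in the induction establishing $W^*(\alpha_s(A),S_s,s\in[t,\tau]) = W^*(\alpha_t(A),S_s-S_t,s\in[t,\tau])$ from the single-interval strong solution hypothesis and then correctly invoking the amalgamated freeness of the increments over the nested algebras $W^*(\alpha_t(A))\subset \mathcal{F}_t$ — one has to be careful that the relevant conditional expectations are $\tau$-preserving and that the amalgam is over $W^*(\alpha_t(A))$ rather than over $B$, which requires knowing that $\mathcal{F}_t$ and the future are free with amalgamation over the smaller algebra $W^*(\alpha_t(A))$, not merely over $B$.
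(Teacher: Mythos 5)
Your proposal identifies the right two targets (the amalgamated free-product decomposition of $\mathcal{F}_T$ over $W^*(\alpha_t(A))$, and right-continuity via a Blumenthal-type triviality), but it leaves the key step open at exactly the point you flag as the ``main obstacle.'' You correctly observe that the Brownian-increment algebra $W^*(B,S_s-S_t,s\in[t,T])$ is free from $\mathcal{F}_t$ only with amalgamation over $B$, and that what is needed is freeness of the larger future algebra $W^*(\alpha_t(A),S_s-S_t,s\in[t,T])$ from $\mathcal{F}_t$ with amalgamation over $W^*(\alpha_t(A))$, which is strictly more. But you stop there rather than bridge the gap. The paper closes it by a reassociation argument: the strong-solution identity (which is stated outright for all $t\leq\tau\leq T$, so no induction on a partition is needed) yields $W^*(\alpha_t(A),S_s-S_t,s\in[t,T])=W^*(\alpha_t(A))*_{W^*(B)}W^*(B,S_s-S_t,s\in[t,T])$, and since $W^*(B)\subset W^*(\alpha_t(A))\subset\mathcal{F}_t$, associativity of amalgamated free products gives
\begin{equation*}
\mathcal{F}_t*_{W^*(\alpha_t(A))}\Bigl(W^*(\alpha_t(A))*_{W^*(B)}W^*(B,S_s-S_t,s\in[t,T])\Bigr)=\mathcal{F}_t*_{W^*(B)}W^*(B,S_s-S_t,s\in[t,T])=\mathcal{F}_T.
\end{equation*}
Without this reassociation your claimed free-product decomposition over $W^*(\alpha_t(A))$ remains unjustified; merely knowing increments are free over $B$ does not produce it.

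Your right-continuity sketch is also too loose. The heuristic that $S_u-S_t$ becomes small and is ``annihilated by $E_{\mathcal{F}_t}$'' says something about generators, not about arbitrary elements of $\bigcap_{u>t}\mathcal{F}_u$; it does not by itself exclude elements of the intersection outside $\mathcal{F}_t$. The paper's argument uses the explicit $L^2$ word decomposition of the amalgamated free product $\mathcal{F}_u=\mathcal{F}_t*_{W^*(B)}W^*(B,S_s-S_t,s\in[t,u])$ so that the intersection over $u>t$ can be taken factorwise, and then invokes the Blumenthal-type triviality $\bigcap_{u>t}W^*(B,S_s-S_t,s\in[t,u])=W^*(B)$, which is obtained from freeness of that intersection with the full increment algebra and faithfulness of the trace. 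You need some version of that mechanism; the $L^2$-continuity of $s\mapsto S_s$ alone is insufficient.
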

 \begin{proof}By assumption we have $\mathcal{F}_t=W^*(\alpha_s(A),S_s,s\in[0,t])= W^*(A,S_s,s\leq t)$ and  $W^*(\alpha_s(A),S_s-S_t,s\in[t,T])=W^*(\alpha_t(A),S_s-S_t,s\in[t,T])$. We have thus to show
 $\mathcal{F}_T=\mathcal{F}_t*_{W^*(\alpha_t(A))}W^*(\alpha_t(A),S_s-S_t,s\in[t,T]),$ and the continuity.  
 
 But by definition of a free Brownian motion  for $t\leq u\leq T$
 $\mathcal{F}_u=\mathcal{F}_t*_{W^*(B)}W^*(B,S_s-S_t,s\in[t,u]),$
so that by an explicit description of projections in $L^2$ for free products $\cap_{t<u<T} \mathcal{F}_u=\mathcal{F}_t*_{W^*(B)}\cap_{t<u<T}W^*(B,S_s-S_t,s\in[t,u])=\mathcal{F}_t$. The last equality comes from $\cap_{t<u<T}W^*(S_s-S_t,s\in[t,u])$ free with amalgamation over $B$ with $W^*(B,S_s-S_t,s\in[t,T])$ in this same algebra, thus it equals $W^*(B),$ by faithfulness. This gives the right continuity thus obviously the continuity of the filtration.
  
The equality above defining the strong solution property of course especially also implies 
$W^*(\alpha_t(A),S_s-S_t,s\in[t,T])=W^*(\alpha_t(A))*_{W^*(B)}W^*(B,S_s-S_t,s\in[t,T])$
so that by general associativity properties of freeness with amalgamation 
 $\mathcal{F}_t*_{W^*(\alpha_t(A))}W^*(\alpha_t(A),S_s-S_t,s\in[t,T])=\mathcal{F}_t*_{W^*(\alpha_t(A))}W^*(\alpha_t(A))*_{W^*(B)}W^*(S_s-S_t,s\in[t,T])=\mathcal{F}_t*_{W^*(B)}W^*(S_s-S_t,s\in[t,T])=\mathcal{F}_T$ as expected.

 \end{proof}

\subsection{A time dependent free Ito formula}\label{timeIto}

In order to use free PDE's dual to free SDE's, we will need a time dependent Ito formula. This is a mere adaptation of well-known results. Let us first fix notation. Consider the setting of assumption (A). 
Note here we don't assume having a strong solution, since the results of this section will be applied to the reversed process for which this is generally unknown.

We want to define $C^1$ maps with value in the algebra $A$ and for this we need a seminorm consistent with all representations above. For $X\in A_i$, we first write :

$$||X||_{A_i}=\sup\{||\alpha_t(X)||_{B(L^2(\alpha_t(A),\tau))}, t\in {[0,T]}\}.$$

From the assumed continuity of $\alpha_t(X)$ (coming from (A.4)), this is obviously a $C^*$-norm.
Let us write $\mathcal{A}_i$ the completion of $A_i$ for this  $C^*$ norm, and then  $\mathcal{A}=\mathcal{A}_1*_B...*_B\mathcal{A}_n$ the universal free product $C^*$ algebra with amalgamation over $B$ (or rather its completion in any of $\mathcal{A}_i$).

We call $C^2(A_1,...,A_n:B)$ the separation completion of the algebraic product $Alg(A_1,...,A_n)$ in the norm $$||X||_{C^2}=||X||_{\mathcal{A}}+\sum_i||\delta_i(X)||_{(\mathcal{A}\hat{\o}\mathcal{A})}+\sum_{i,j}||(\delta_i\o1)\delta_j(X)||_{(\mathcal{A}\hat{\o}\mathcal{A}\hat{\o}\mathcal{A})}+||(1\o\delta_i)\delta_j(X)||_{(\mathcal{A}\hat{\o}\mathcal{A}\hat{\o}\mathcal{A})}.$$

We write $\iota:A\to C^2(A_1,...,A_n:B)$ the canonical (not necessarily injective) map.

In this way, it is easy to see $\delta$ extends to $C^2(A_1,...,A_n:B)\to \mathcal{A}\hat{\o}\mathcal{A}$ and $\Delta_{Q,s}$ extends to a map from 
$C^2(A_1,...,A_n:B)\to L^{\infty}(A,\tau\circ\alpha_s).$
We will also write $\tau_t$ the law on $\mathcal{A}$ coming from $\tau$ restricted to $\alpha_t(A_1),...,\alpha_t(A_n)$
Note that in the case of assumption (B) $\Delta_{Q,s}$ actually depends on time only through $\tau_s$ and extends to a map from 
$\Delta_{Q}^{\tau_s}:C^2(A_1,...,A_n:B)\to \mathcal{A}$. 

We will also consider a space of $C^1$ maps 
$C^1([s_1,s_2],C^2(A_1,...,A_n:B))$, for $s_1\leq s_2\leq T$, defined as the completion of the space of $C^1$ maps into $A$ for the seminorm of $C^2(A_1,...,A_n:B)$. The  seminorm of the completion is given by :

$$||P||_{C^1([s_1,s_2],C^2)}=\sup_{t\in[s_1,s_2]}\left(||P_t||_{C^2}+||\frac{\partial P_t}{\partial t}||_{C^2}
\right).$$

[Note $T$ is fixed and we defined functions on $[s_1,s_2]$ involving a norm $C^2$ involving all the representations on $[0,T]$ for latter notational convenience. Especially, in this way, $C^1([0,s], C^2(A_1,...,A_n:B))]\simeq C^1([T-s,T], C^2(A_1,...,A_n:B))]$ by change of time $s\mapsto T-s$.]
We can now state our Ito formula :

\begin{proposition}\label{TimeIto}
With the notation above assuming assumption (A), consider

\noindent $P\in C^1([0,T],C^2(A_1,...,A_n:B))$, $t\mapsto \alpha_t(P_t)$, then :
$$\alpha_t(P_t)=\alpha_0(P_0)+\int_0^tds[\alpha_s(\Delta_{Q,s}(P_s))+\alpha_s(\frac{\partial P_s}{\partial s})]+\int_0^t\alpha_s\otimes\alpha_s(\delta(P_s))\#dS_s,$$
with the integral making sense in $L^q$ for $q$ as in assumption (A).
\end{proposition}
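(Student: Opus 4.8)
The strategy is to reduce the statement to the already-established SDE in (A.4) together with the chain rule for the derivation $\delta$ and generator $\Delta_{Q,s}$, handling the time dependence by a classical ``freeze-and-telescope'' argument. First I would treat the case where $P_t = P$ is \emph{time-independent} and a monomial (hence, by linearity and density, an arbitrary element of $C^2(A_1,\dots,A_n:B)$): here one wants the identity
$$\alpha_t(P)=\alpha_0(P)+\int_0^tds\,\alpha_s(\Delta_{Q,s}(P))+\int_0^t\alpha_s\otimes\alpha_s(\delta(P))\#dS_s.$$
For a single generator this is exactly (A.4); for a product $PP'$ one applies the free Ito formula (Proposition \ref{ItoSimple}, or Proposition \ref{ItoLp} in the $L^p$ setting) to $X_t=\alpha_t(P)$, $Y_t=\alpha_t(P')$, and checks that the drift and bracket terms assemble into $\alpha_s(\Delta_{Q,s}(PP'))$ and $\alpha_s\otimes\alpha_s(\delta(PP'))\#dS_s$. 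This uses precisely that $\delta$ is a derivation and that $\Delta_{Q,s}$ has the second-order form displayed in (A.3): the quadratic-variation term $\int_0^s m\circ(1\otimes(\eta\circ E_B\circ m)\otimes 1)(\alpha_v\otimes\alpha_v(\delta(P))\otimes \alpha_v\otimes\alpha_v(\delta(P')))dv$ is exactly the image under $\alpha_v$ of the ``$\tfrac12\sum_i m\circ 1\otimes\eta\circ E_B\circ\alpha_s\otimes 1(\delta_i\otimes1\,\delta_i + 1\otimes\delta_i\,\delta_i)$'' part of $\Delta_{Q,s}$ evaluated on the product, while the Leibniz terms $U_vY_v+X_vV_v$ give $\alpha_v\otimes\alpha_v(\delta(PP'))$ and the first-order drift terms add up to $\tilde\delta_s(PP')$. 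An induction on the degree of the monomial then gives the time-independent case; the bounds in the definitions of $C^2$ and of $\mathcal{B}_q^a$ ensure all integrals converge in $L^q$ and that the identity passes to the completion.

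Next I would incorporate the explicit time dependence of $P_t$. Fix a partition $0=t_0<t_1<\dots<t_N=t$ and write the telescoping sum
$$\alpha_t(P_t)-\alpha_0(P_0)=\sum_{k=0}^{N-1}\big(\alpha_{t_{k+1}}(P_{t_{k+1}})-\alpha_{t_k}(P_{t_k})\big)
=\sum_{k=0}^{N-1}\big(\alpha_{t_{k+1}}(P_{t_k})-\alpha_{t_k}(P_{t_k})\big)+\sum_{k=0}^{N-1}\big(\alpha_{t_{k+1}}(P_{t_{k+1}})-\alpha_{t_{k+1}}(P_{t_k})\big).$$
The first sum, by the time-independent case applied to each fixed element $P_{t_k}$ on the interval $[t_k,t_{k+1}]$, equals $\sum_k\int_{t_k}^{t_{k+1}}ds\,\alpha_s(\Delta_{Q,s}(P_{t_k}))+\sum_k\int_{t_k}^{t_{k+1}}\alpha_s\otimes\alpha_s(\delta(P_{t_k}))\#dS_s$, which converges as the mesh tends to $0$ to $\int_0^tds\,\alpha_s(\Delta_{Q,s}(P_s))+\int_0^t\alpha_s\otimes\alpha_s(\delta(P_s))\#dS_s$ by continuity of $s\mapsto P_s$ in $C^2$-norm (here one uses that $s\mapsto\alpha_s(Q_{j,s})\in L^1([0,T],L^q)$ to control the drift, and the $\mathcal{B}_2^a$, resp.\ $\mathcal{B}_\infty^a$, isometry/bound to control the stochastic integral). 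The second sum equals $\sum_k\alpha_{t_{k+1}}(P_{t_{k+1}}-P_{t_k})$; writing $P_{t_{k+1}}-P_{t_k}=\int_{t_k}^{t_{k+1}}\frac{\partial P_r}{\partial r}dr$ and using that $\alpha_{t_{k+1}}$ is a (contractive, $L^q$-continuous) $*$-homomorphism together with uniform continuity of $r\mapsto\frac{\partial P_r}{\partial r}$ and of $u\mapsto\alpha_u$ on the relevant elements, this converges to $\int_0^t\alpha_s(\frac{\partial P_s}{\partial s})ds$. Adding the two limits gives the claimed formula.

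The main obstacle is the second step's passage to the limit in the ``moving'' term $\sum_k(\alpha_{t_{k+1}}(P_{t_{k+1}})-\alpha_{t_{k+1}}(P_{t_k}))$ and, correlatively, making the first telescoping sum converge: one must show that replacing $\alpha_s$ by $\alpha_{t_k}$ (or $\alpha_{t_{k+1}}$) inside the drift and diffusion integrals produces only an $o(1)$ error as the mesh shrinks. This requires genuine (not merely pointwise) continuity estimates: continuity of $t\mapsto\alpha_t(X)$ in $\|\cdot\|_2$ uniformly over $X$ in $C^2$-balls, and the quantitative control of $\int\|\alpha_s\otimes\alpha_s(\delta(P_s))-\alpha_{t_k}\otimes\alpha_{t_k}(\delta(P_{t_k}))\|^2_{\mathcal{H}(M,\eta\circ E_B)^m}ds\to 0$, for which the $C^1([s_1,s_2],C^2)$ seminorm and the isometry property of the stochastic integral (Lemma \ref{Bpa}) are exactly what is needed. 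The remaining verifications — that $\delta$ and $\Delta_{Q,s}$ extend continuously to $C^2(A_1,\dots,A_n:B)$ (already noted in the text) and that all integrands lie in the appropriate $\mathcal{B}_q^a$ — are routine given the definitions.
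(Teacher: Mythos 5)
Your proof reaches the right conclusion but takes a genuinely different route from the paper's, and there is also one place where you do unnecessary work.

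On the time-independent case: you propose to establish $\alpha_t(P)=\alpha_0(P)+\int_0^t\alpha_s(\Delta_{Q,s}(P))ds+\int_0^t\alpha_s\otimes\alpha_s(\delta(P))\#dS_s$ first for generators via (A.4) and then for products by the free Ito formula plus induction on degree. But Assumption (A.4) is stated for \emph{every} $X\in A$, not merely for generators, so the algebraic part of this step is already hypothesized and does not need to be re-derived; the paper's parenthetical remark that ``an application of Ito's formula shows that the generator is compatible with the $*$-homomorphism property'' is a consistency check on the formulation of (A), not a lemma used in the proof of Proposition \ref{TimeIto}. What remains is only the extension from $A$ to $C^2(A_1,\dots,A_n:B)$, and that is a routine density/continuity argument, exactly as the paper notes.

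On the time-dependent part, your approach is a Riemann-sum telescoping argument: split $\alpha_t(P_t)-\alpha_0(P_0)$ as $\sum_k(\alpha_{t_{k+1}}-\alpha_{t_k})(P_{t_k})+\sum_k\alpha_{t_{k+1}}(P_{t_{k+1}}-P_{t_k})$, apply (A.4) frozen at $P_{t_k}$ to the first sum, and use the $C^1$ structure for the second, then pass to the limit. The paper instead writes $P_t=P_0+\int_0^t\frac{\partial P_s}{\partial s}\,ds$ in $\mathcal{A}$ by the fundamental theorem of calculus, applies (A.4) to each $\frac{\partial P_s}{\partial s}$ to express $\alpha_t(\frac{\partial P_s}{\partial s})-\alpha_s(\frac{\partial P_s}{\partial s})$, and then exchanges the order of integration by a deterministic Fubini and a stochastic Fubini (checked via isometry) to reassemble $\int_0^u\frac{\partial P_s}{\partial s}ds=P_u-P_0$ inside $\Delta_{Q,u}$ and $\delta$. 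Both give the same identity, but the Fubini route avoids partitions altogether: you never have to quantify the uniform-in-$k$ error incurred when replacing $\alpha_s(\Delta_{Q,s}(P_{t_k}))$ by $\alpha_s(\Delta_{Q,s}(P_s))$, nor the $L^2$ control needed to pass the Riemann sums of stochastic integrals to the limit. Your plan can be made rigorous — the $C^1([0,T],C^2)$ norm and the $\mathcal{B}_q^a$ isometry give the needed bounds — but those estimates are precisely the extra work that the Fubini argument sidesteps; so if you proceed as you propose, these approximation estimates must be carried out explicitly rather than invoked as ``routine.''
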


\begin{proof}

From fundamental Th. of calculus valued in the Banach space $\mathcal{A}$ we have :

$P_t=P_0+\int_0^tds\frac{\partial P_s}{\partial s},$ and we will apply $\alpha_t$ to this equation.

If $\frac{\partial P_s}{\partial s}$ were in $A$, our assumption would give :
$$\alpha_t(\frac{\partial P_s}{\partial s})=\alpha_{s}(\frac{\partial P_s}{\partial s})+\int_s^tdu\alpha_u(\Delta_{Q,u}(\frac{\partial P_s}{\partial s}))+\int_s^t\alpha_u\otimes\alpha_u(\delta(\frac{\partial P_s}{\partial s}))\#dS_u.$$
From the definition as a completion, and all the continuity of the maps, this equation easily extends to the general case we now consider.

Moreover, from the continuity of $\alpha_s$ and since  $\frac{\partial P_s}{\partial s}$ is continuous in $C^2(A_1,...,A_n:B)$, one can easily apply Fubini Theorem :

$$\int_0^tds\int_s^tdu\alpha_u(\Delta_{Q,u}(\frac{\partial P_s}{\partial s}))=\int_0^tdu\alpha_u(\Delta_{Q,u}(\int_0^uds\frac{\partial P_s}{\partial s})),$$
and a stochastic variant (for which by isometry it suffices to apply a Fubini after computing a scalar product with any stochastic integral): 

$$\int_0^tds\int_s^t\alpha_u\otimes\alpha_u(\delta(\frac{\partial P_s}{\partial s}))\#dS_u=\int_0^t\alpha_u\otimes\alpha_u(\delta(\int_0^uds\frac{\partial P_s}{\partial s}))\#dS_u.$$
Combining, our four equations and the equation for $\alpha_t(P_0)$, we easily get the result. 

\end{proof}

\

\subsection{Non-microstates free entropy and mutual information}\label{Entropy}

 Let us recall the definition of free entropy with respect to a completely positive map $\eta:B\to B$ (assumed to be $\tau$-symmetric on $B$, for $\tau$ the given trace on $B$; i.e. $\tau(b\eta(b'))=\tau(\eta(b)b')$) as in \cite{ShlyFreeAmalg00} (with a change of  normalization). This generalizes Voiculescu's Non-microstates free entropy relative to $B$ corresponding to the case $\eta=\tau$.

 We first define relative free Fisher information. Consider $B\subset (M,\tau)$ ,and selfadjoint elements $X_1,...,X_n\in (M,\tau)$. We assume $X_1,...,X_n$ algebraically free with $B$ (maybe modulo using lemma 3.2 in \cite{ShlyFreeAmalg00} after going to a (non-faithful) system with the same joint law)
Then the $i$-th free-difference quotient is defined as the unique derivation $\delta_i:B\langle X_1,...,X_n\rangle\to \mathcal{H}(B\langle X_1,...,X_n\rangle,\eta\circ E_B),$ such that $\delta_i(b)=0$, for $b\in B$, $\delta_i(X_j)=1_{\{i=j\}}\otimes 1.$ 
Looking at $\delta_i$ as an unbounded operator $L^2(B\langle X_1,...,X_n\rangle,\tau)\to \mathcal{H}(B\langle X_1,...,X_n\rangle,\eta\circ E_B)$, the i-th conjugate variable $\xi_i(X_1,...,X_n:B,\eta)$ is defined as $\delta_i^*1\o 1$ if it exists in $L^2(B\langle X_1,...,X_n\rangle,\tau).$

We then define Fisher information as 

$$\Phi^*(X_1,...,X_n:B,\eta):=\sum_{i=1}^n||\xi_i(X_1,...,X_n:B,\eta)||_2^2.$$
 
 if all conjugate variable exists and $+\infty$ otherwise.
 
 Let $S_s=(S_s^{(1)},...,S_s^{(n)})$ a B-free Brownian motion of covariance $\eta$ free with amalgamation over $B$ with $X=(X_1,...,X_n)$, and let $X_t=X+S_t.$
Then from \cite[Proposition 3.12]{ShlyFreeAmalg00}(variant of \cite[Corollary 3.9]{Vo5}) $\xi_i(X_t;B,\eta)$ exits and equals $\xi_i(X_t;B,\eta)=E_{L^2(B\langle X_t\rangle)}[S_t^{(i)}]/t$ and also from \cite[Proposition 3.11]{ShlyFreeAmalg00}(variant of \cite[Proposition 3.7]{Vo5}) $\xi_i(X_t;B,\eta)=E_{L^2(B\langle X_t\rangle)}[\xi_i(X_s;B,\eta)]$ for $0<s\leq t.$
 
 Especially, it is known (see \cite[Proposition 4.9]{ShlyFreeAmalg00}) that $t\mapsto \Phi^*(X_t;B,\eta)$
 is decreasing, right continuous, bounded by
   $$n^2\tau(\eta(1))^2(\sum_i\tau(X_i^2)+n\tau(\eta(1))t)^{-1}\leq \Phi^*(X_t;B,\eta)\leq n\tau(\eta(1))t^{-1}.$$
  One then defines free entropy (with a corrected normalization) :
 
\begin{align*} \chi^{*}(X_{1},...,X_{m};B,\eta)=\frac{1}{2}&\int_0^\infty\left(\frac{n\tau(\eta(1))}{1+t}-\Phi^*(X_t;B,\eta)\right)dt+\frac{m}{2}\log (2\pi e)\tau(\eta(1)),
\end{align*}
We recall the mutual information defined in \cite[section 14.3]{Vo6}.

Consider $B_1,...,B_n,B$ a bunch of (unital *-)algebras algebraically independent so that $A_i=Alg(B_i,B)$ are algebraically free over $B$  in a finite von Neumann algebra $(M,\tau)$,  a completely positive map. We define $\delta_{B_i:B_1\vee ... \hat{B_i}...\vee B_n\vee B}=\delta_i$ as the derivation on $A=A_1\vee...\vee A_n$ (the algebra generated by $A_i$'s)
$\delta_i: A\to A\o A$ such that $\delta_i(a)=a\o1-1\o a$ if $a\in B_i$ and $\delta_i(a)=0$ if $a\in A_k,k\neq i$ or $a\in B$. Seeing $\delta_i :L^2(A,\tau)\to L^2(A,\tau)\o L^2(A,\tau)$ as a densely defined unbounded operator, the liberation gradient is defined as $j(B_i:B_1\vee ... \hat{B_i}...\vee B_n\vee B)=\delta_i^{*}1\o 1\in L^2(A,\tau)$ if it exists.
Then we define the liberation Fisher information $\varphi^*(B_1,...,B_n:)=\sum_{i=1}^n||\delta_i^{*}1\o 1||_2^2$.

Consider $S_t^{(i)}$ a free Brownian motion of  free from $A$. Consider also $U_t^{(i)}$ as in example \ref{liberation}, the corresponding unitary Brownian motion, i.e. the solution of the free S.D.E. $U_t^{(i)}=1-\int_0^t\frac{1}{2}U_s^{(i)}ds+i\int_0^tdS_s^{(i)}U_s^{(i)}$.

Finally mutual information is defined by :
$$i^*(B_1,...,B_n:B)=\frac{1}{2}\int_0^\infty \varphi^*(U_s^{(1)}B_1U_s^{(1)*},...,U_s^{(n)}B_nU_s^{(n)*}:B)\in[0,\infty].$$


We recall key properties of \cite{Vo6}, contained essentially in the proof of proposition 9.10.

\begin{proposition}
\begin{enumerate}
\item[(i)] $j(U_s^{(i)}B_iU_s^{(i)*}:U_s^{(1)}B_1U_s^{(1)*}\vee ... \widehat{U_s^{(i)}B_iU_s^{(i)*}}...\vee U_s^{(n)}B_nU_s^{(n)*}; B)$ exists for any $s>0$.
\item[(ii)] $s\mapsto \varphi^*(U_s^{(1)}B_1U_s^{(1)*},...,U_s^{(n)}B_nU_s^{(n)*}:B)$ is decreasing right continuous  and $s\mapsto j(U_s^{(i)}B_iU_s^{(i)*}:U_s^{(1)}B_1U_s^{(1)*}\vee ... \widehat{U_s^{(i)}B_iU_s^{(i)*}}...\vee U_s^{(n)}B_nU_s^{(n)*}; B)$
is right continuous with left limits and at most countably many  discontinuity points (everything in $L^2$ norm).
\end{enumerate}
\end{proposition}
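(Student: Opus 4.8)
The plan is to reproduce, in the present normalization, the argument underlying the proof of \cite[Proposition~9.10]{Vo6}. Throughout, write $\beta_s^{(i)}=U_s^{(i)}B_iU_s^{(i)*}$ and $N_s=Alg(B,\beta_s^{(1)},\dots,\beta_s^{(n)})$, and abbreviate by $j_i(s)$ the liberation gradient $j(\beta_s^{(i)}:\beta_s^{(1)}\vee\cdots\widehat{\beta_s^{(i)}}\cdots\vee\beta_s^{(n)};B)$ and by $\varphi^*(s)$ the liberation Fisher information of $(\beta_s^{(1)},\dots,\beta_s^{(n)}:B)$; by Example~\ref{liberation} the $\beta_s^{(i)}$ satisfy the liberation SDE driven by free Brownian motions $S^{(i)}$ free from $A$, with $U^{(i)}$ the associated free unitary Brownian motions.

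For part (i), fix $s>0$: one must exhibit $\zeta_i(s)\in L^2(N_s,\tau)$ with $\langle\zeta_i(s),P\rangle=\langle 1\otimes 1,\delta_i(P)\rangle$ for all $P\in N_s$. First I would apply the free Itô formula of Example~\ref{liberation} to expand both $\beta_s^{(i)}$ and an arbitrary word $P$ in the $\beta_s^{(j)}$ and $B$; using that $U^{(i)}$ is $*$-free from $A$, this rewrites the form $P\mapsto\langle 1\otimes 1,\delta_i(P)\rangle$ as $P\mapsto\langle D_i(s),P\rangle$, where $D_i(s)$ is the sum of a bounded element and a stochastic integral against the driving free Brownian motions. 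This is the liberation counterpart of the identity $\xi_i(X_t;B,\eta)=E_{L^2(B\langle X_t\rangle)}[S_t^{(i)}]/t$ recalled in \S\ref{Entropy}; it yields $j_i(s)=\delta_i^*(1\otimes 1)=E_{L^2(N_s)}[D_i(s)]$, so each $j_i(s)$ exists and $\varphi^*(s)<\infty$ for $s>0$ — the liberation analogue of the finiteness of $\Phi^*(X_t;B,\eta)$ for $t>0$.

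For part (ii), the key is the conditional-expectation identity
$$j_i(s)=E_{L^2(N_s)}\bigl[\,j_i(s')\,\bigr],\qquad 0<s'\le s,$$
obtained because the left increment $V^{(i)}:=U_s^{(i)}(U_{s'}^{(i)})^{-1}$ is a fresh free unitary Brownian motion of parameter $s-s'$ free from $W^*(N_{s'})$ and $\beta_s^{(i)}=V^{(i)}\beta_{s'}^{(i)}V^{(i)*}$, so that running the liberation process from time $s'$ is again a liberation process; one inserts this into the formula of part (i). Since conditional expectations are $L^2$-contractions, each $s\mapsto\|j_i(s)\|_2$ is non-increasing, hence so is $\varphi^*$, and a monotone function has at most countably many discontinuities. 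Moreover, for $s'\le s$ the vector $j_i(s)$ is the orthogonal projection of $j_i(s')$ onto $L^2(N_s)$, whence $\|j_i(s')\|_2^2=\|j_i(s)\|_2^2+\|j_i(s')-j_i(s)\|_2^2$; together with monotonicity of $\|j_i(\cdot)\|_2$ this shows at once that $s\mapsto j_i(s)$ has a left limit at every point and is left-continuous precisely where $\|j_i(\cdot)\|_2$ is, while right-continuity holds everywhere since $N_{s_n}\to N_{s_0}$ as $s_n\downarrow s_0$ — exactly as for $t\mapsto\xi_i(X_t;B,\eta)$ and $\Phi^*(X_t;B,\eta)$ in \S\ref{Entropy}. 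As the discontinuities of each $j_i$ are thus confined to the countable jump set of $\varphi^*$, all the asserted regularity follows.

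The hard part will be the conditional-expectation identity for $j_i$: transporting the freeness structure of the free unitary Brownian motion increments through the nonlinear conjugation $b\mapsto U_s^{(i)}bU_s^{(i)*}$ so as to identify the time-$s$ liberation gradient with a projection of the time-$s'$ one — this is exactly what the free Itô formula of Example~\ref{liberation} is for, and where the real work in \cite{Vo6} lies. A secondary point is controlling the stochastic part of $D_i(s)$ in part (i) — so that $D_i(s)$ is a genuine square-integrable element rather than merely an unbounded form — which rests on the regularity of the law of the free unitary Brownian motion at positive times.
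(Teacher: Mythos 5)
The key difficulty is the conditional‑expectation identity, and there you have the wrong formula. The paper's identity (its equation \eqref{projLiberation}, obtained via \cite[Propositions 5.1 and 5.14]{Vo6}) is
\begin{align*}
j_i(t)\;=\;E_t\!\left[\,U_t^{(i)}U_s^{(i)*}\,j_i(s)\,U_s^{(i)}U_t^{(i)*}\,\right],\qquad s<t,
\end{align*}
\emph{not} $j_i(t)=E_t[j_i(s)]$. The conjugation by $U_t^{(i)}U_s^{(i)*}$ is forced by the fact that the liberation derivation $\partial_i$ at time $t$ is the restriction of $\delta_i\#(U_s^{(i)}U_t^{(i)*}\otimes U_t^{(i)}U_s^{(i)*})$, where $\delta_i$ is the corresponding derivation at time $s$; that precomposition is what survives when you pass to adjoints. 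Your plain‑projection version would say that $j_i(t)$ is literally the orthogonal projection of $j_i(s)$ onto $L^2(N_t)$, and that is not what running the liberation process from $\beta_s^{(i)}$ gives — $N_s$ and $N_t$ are not nested, and after you insert $\beta_t^{(i)}=V^{(i)}\beta_{s}^{(i)}V^{(i)*}$ the derivation is transported by $V^{(i)}\otimes V^{(i)*}$. The monotonicity of $\|j_i(\cdot)\|_2$ survives your mistake (unitary conjugation and conditional expectation are both $L^2$‑contractions), but your orthogonal decomposition
$\|j_i(s')\|_2^2=\|j_i(s)\|_2^2+\|j_i(s')-j_i(s)\|_2^2$ does not: the correct Pythagoras is for $U j_i(s')U^*$ (with $U=U_s^{(i)}U_{s'}^{(i)*}$), and recovering the limit regularity of $j_i$ from it additionally requires the operator‑norm continuity of $t\mapsto U_t^{(i)}$, which you never invoke.

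Your argument for right continuity is also incomplete. You assert ``$N_{s_n}\to N_{s_0}$ as $s_n\downarrow s_0$,'' but the algebras $N_s$ form neither an increasing nor a decreasing family, so there is no free right‑continuity to appeal to. The paper's route is to use free markovianity to replace $E_t$ in \eqref{projLiberation} by $E_{\geq t}$, the conditional expectation onto $W^*(U_u^{(j)}A_jU_u^{(j)*},\ j,\ u\geq t;\,B)$, which \emph{is} a (reversed) filtration and is right continuous with left limits; together with the continuity of $U_t^{(i)}$ in $M$, this delivers both the one‑sided continuity of $j_i$ and the right continuity of $\varphi^*$. That markovianity step is missing from your sketch, and without it the claim does not follow from what you have. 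For part (i) your idea of expanding $\delta_i^*(1\otimes1)$ into a stochastic‑integral representation is essentially what \cite[Proposition 9.4, Remark 8.10]{Vo6} does, and the paper simply cites that; that part is fine.
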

\begin{proof}


(i) is a consequence of \cite[Proposition 9.4, Remark 8.10]{Vo6}

(ii) From \cite[Proposition 5.14]{Vo6} with $\tau$ replaced by $E_B$ in the conclusion, we deduce as for its consequence proposition 5.13 that \begin{align*}&j(U_s^{(i)}B_iU_s^{(i)*}:U_s^{(1)}B_1U_s^{(1)*}\vee ... \widehat{U_s^{(i)}B_iU_s^{(i)*}}...\vee U_s^{(n)}B_nU_s^{(n)*}; B)\\&=j(U_s^{(i)}B_iU_s^{(i)*}:U_s^{(1)}B_1U_s^{(1)*}\vee ... \widehat{U_s^{(i)}B_iU_s^{(i)*}}...\vee U_s^{(n)}B_nU_s^{(n)*}\vee W^*(U_t^{(j)}U_s^{(j)*},j,t\geq s); B).\end{align*}
Then, as in Proposition 5.1 there again, for $t>s$, call $\delta_i$ the derivation associated with the last conjugate variable, and $\partial_i$ the one associated to the one in the next formula, we see $\partial_i$ coincides with the restriction of $\delta_i\#(U_s^{(i)}U_t^{(i)*}\o U_t^{(i)}U_s^{(i)*})$ so that if $E_t=E_{U_t^{(1)}A_1U_t^{(1)*}\vee ...\vee U_t^{(n)}A_nU_t^{(n)*}\vee B}$
\begin{align}\label{projLiberation}\begin{split}&j(U_t^{(i)}A_iU_t^{(i)*}:U_t^{(1)}A_1U_t^{(1)*}\vee ... \widehat{U_t^{(i)}A_iU_t^{(i)*}}...\vee U_t^{(n)}A_nU_t^{(n)*}; B)\\&=E_t[U_t^{(i)}U_s^{(i)*}j(U_s^{(i)}B_iU_s^{(i)*}:U_s^{(1)}B_1U_s^{(1)*}\vee ... \widehat{U_s^{(i)}B_iU_s^{(i)*}}...\vee U_s^{(n)}B_nU_s^{(n)*}; B)U_s^{(i)}U_t^{(i)*}].\end{split}\end{align}
The stated decreasingness property follows. From free markovianity, $E_t$ above can be replaced by $E_{\geq t}=E_{U_u^{(1)}A_1U_u^{(1)*}\vee ...\vee U_u^{(n)}A_nU_u^{(n)*}\vee B,u\geq t}$ which is right continuous with left limits as the corresponding filtration.  The continuity statement for liberation Fisher information and liberation gradients then follow from this and the continuity in $M$ of $U_t^{(i)*}.$ 


\end{proof}

\section{Construction of the free diffusion for the time reversal}
Let 
$B\subset M_0\subset (M,\tau)$ separable and let $(S_{1,t},...,S_{n,t})\in M$ a free Brownian motion of covariance $\eta$  for $\eta:B\to B$ symmetric ucp map, free from $M_0$
with amalgamation over $B$. 


Let us consider the situation of assumption (B) and thus we assume given a freely markovian solution for $X\in A$ :
$$X_t:=\alpha_t(X)=\alpha_0(X)+\int_0^tds\alpha_s(\Delta_{Q,s}(X))+\int_0^t\alpha_s\otimes\alpha_s(\delta(X))\#dS_s.$$

We want to describe the time reversed process $\overline{X}_{t}=X_{T-t}$ as a semimartingale with respect to the  canonical reversed filtration (for any $X\in A$). Let us consider $\overline{\xi}_{s}=(\overline{\xi}_{1,s},..,\overline{\xi}_{n,s})=(\delta_1^*1\o1,...,\delta_n^*1\o1)$ computed in $W^*(\alpha_{T-s}(A))$

We first recall a lemma essentially due to Voiculescu \cite{Voi5} :

\begin{lemma}\label{VoiculescuFormula}
In the setting of assumption (A.1), and assuming $\overline{\xi}_{s}=(\overline{\xi}_{1,s},..,\overline{\xi}_{n,s})=(\delta_1^*1\o1,...,\delta_n^*1\o1)$ exists in $L^2(W^*(\alpha_{T-s}(A))^n$ as above then $(A\otimes A)^p\subset D(\delta^*)$ and moreover for any $a\o b\in \alpha_{T-s}(A)\otimes \alpha_{T-s}(A)$:
$$\delta_i^*(a\o b)=a\overline{\xi}_{i,s}b-[m\circ(1\otimes \eta\circ E_B)(\delta_i(a))]b-a[m\circ(\eta\circ E_B\otimes 1)(\delta_i(b))].$$
P):=
Especially, $\alpha_{T-s}(A)\subset D(\delta_s^*\delta_s)$ and for any $P\in A$ : $$\Delta_s(P):= \delta_s^*\delta_s(P)=\delta_s(P)\#\overline{\xi}_s-\sum_i[m\circ( 1\otimes \eta\circ E_B\otimes 1)]((\delta_{i,s}\o 1)\delta_{i,s}(P)+(1\o \delta_{i,s})\delta_{i,s}(P)).$$
Finally, for any $Z\in D(\bar{\delta}_s)\cap W^*(\alpha_{T-s}(A))$, there exists a sequence $Z_{n}\in \alpha_{T-s}(A)$ with $||Z_{n}||\leq ||Z||$, $||Z_{n}-Z||_{2},||\delta_s(Z_{n})-\bar{\delta}_s(Z)||_{2}\rightarrow 0$.
\end{lemma}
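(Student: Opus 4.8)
The statement is Voiculescu's integration-by-parts formula for free difference quotients, transported to the algebra $\alpha_{T-s}(A)$, together with the standard density/boundedness refinement (the $Z_n$ with $\|Z_n\|\le\|Z\|$). I would prove it in three stages: first the formula $\delta_i^*(a\otimes b)$ for elementary tensors, from which membership $(A\otimes A)^p\subset D(\delta^*)$ follows by linearity and density; then the formula for $\Delta_s=\delta_s^*\delta_s$ by plugging $\delta_{i,s}(P)$ into the first formula and using that $\delta_{i,s}(P)$ is a finite sum of elementary tensors; and finally the density statement by a functional-calculus truncation argument. Throughout I would work inside the tracial von Neumann algebra $W^*(\alpha_{T-s}(A))$ with its free difference quotients $\delta_{i,s}$ (the ones defining $\overline{\xi}_{s}$), noting that by Assumption (A.1) $\delta$ restricts correctly and $\delta(B)=0$, so the maps are well-defined.

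\textbf{Step 1: the elementary-tensor formula.} The key identity is the coproduct/Leibniz relation for $\delta_i$ acting on products: for $c,d\in\alpha_{T-s}(A)$ one has $\delta_i(cd)=\delta_i(c)(1\otimes d)+(c\otimes 1)\delta_i(d)$, where the bimodule is $\mathcal H(\alpha_{T-s}(A),\eta\circ E_B)$. To compute $\langle \delta_i(cd),a\otimes b\rangle$ I would expand using this Leibniz rule and the definition of the inner product $\langle x\otimes y,x'\otimes y'\rangle=\tau(y^*\eta(E_B(x^*x'))y')$, then reorganize the resulting scalar into the form $\tau\big((cd)^*\,(\text{something involving }a,b,\overline\xi_{i,s})\big)$. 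Matching against $\langle cd,\delta_i^*(a\otimes b)\rangle$ and using the defining property $\delta_i^*(1\otimes 1)=\overline\xi_{i,s}$ together with $\eta\circ E_B$-bimodularity of $m\circ(1\otimes\eta\circ E_B)$ gives exactly
$$\delta_i^*(a\otimes b)=a\overline{\xi}_{i,s}b-[m\circ(1\otimes \eta\circ E_B)(\delta_i(a))]b-a[m\circ(\eta\circ E_B\otimes 1)(\delta_i(b))].$$
One checks the right-hand side lies in $L^2$ (it does, since $a,b\in\alpha_{T-s}(A)$ are bounded and $\overline\xi_{i,s}\in L^2$, and the correction terms are bounded). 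Since elementary tensors with $a,b\in\alpha_{T-s}(A)$ are dense in $\mathcal H(M,\eta\circ E_B)$ in the graph sense needed, and $\delta^*$ is closed, $(A\otimes A)^p\subset D(\delta^*)$ follows; here I use that $\delta_i$ on $A$ maps into such tensors and is the restriction of the closed operator whose adjoint is computed in $W^*(\alpha_{T-s}(A))$.

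\textbf{Step 2: the formula for $\Delta_s$.} For $P\in A$, $\delta_{i,s}(P)=\alpha_{T-s}\otimes\alpha_{T-s}(\delta_i(P))$ is a finite sum $\sum a\otimes b$ of elementary tensors over $\alpha_{T-s}(A)$, so $P\in D(\delta_s^*\delta_s)$ by Step 1. Applying the Step 1 formula termwise and using the identities $m\circ(1\otimes\eta\circ E_B)\circ(\delta_i\otimes 1)$ applied to $\delta_{i,s}(P)$, and similarly $m\circ(\eta\circ E_B\otimes 1)\circ(1\otimes\delta_i)$, I would recombine the three groups of terms: the $a\overline\xi_{i,s}b$ terms sum to $\delta_s(P)\#\overline\xi_s$, while the two correction groups assemble into $\sum_i m\circ(1\otimes\eta\circ E_B\otimes 1)\big((\delta_{i,s}\otimes 1)\delta_{i,s}(P)+(1\otimes\delta_{i,s})\delta_{i,s}(P)\big)$. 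This is a bookkeeping computation with the coassociativity-type relations $(\delta_i\otimes 1)\delta_i$ and $(1\otimes\delta_i)\delta_i$; the only subtlety is making sure $\eta\circ E_B$ appears in the right middle slot, which is dictated by where it sits in the inner product.

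\textbf{Step 3: the density statement, and the main obstacle.} For $Z\in D(\bar\delta_s)\cap W^*(\alpha_{T-s}(A))$ I would start from Kaplansky density (or normality of $\alpha_{T-s}$ extended to $L^\infty(A,\tau\circ\alpha_{T-s})$): pick $Y_n\in\alpha_{T-s}(A)$ with $\|Y_n\|\le\|Z\|$, $Y_n\to Z$ in $\|\cdot\|_2$. The difficulty is that one cannot expect $\delta_s(Y_n)\to\bar\delta_s(Z)$ for such a crude approximation; one must first approximate $Z$ in the graph norm of $\bar\delta_s$ by bounded-ish elements, then truncate. The standard remedy, which I would follow, is: since $\bar\delta_s$ is a closed real derivation and $\delta_s(B)=0$, it is known (Voiculescu; also the Dirichlet-form machinery referenced in the paper) that $D(\bar\delta_s)\cap W^*(\alpha_{T-s}(A))$ is closed under bounded functional calculus with a uniform norm control, and that $\alpha_{T-s}(A)$ is a core. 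Concretely: take $W_k\in\alpha_{T-s}(A)$ with $W_k\to Z$, $\delta_s(W_k)\to\bar\delta_s(Z)$ in $L^2$ (core property), then apply a fixed Lipschitz function $f_k$ equal to identity on $[-\|Z\|-1/k,\|Z\|+1/k]$ and bounded by $\|Z\|$, using that $\delta_s(f_k(W_k))$ is controlled by $f_k'$ times $\delta_s(W_k)$ via the free Leibniz/functional-calculus estimate (this is where I'd invoke the Dirichlet form / free probability machinery of Section 2.2, or the classical computation of Voiculescu and Dabrowski–Ioana for free difference quotients); a diagonal choice $Z_n=f_n(W_{k(n)})$ then has $\|Z_n\|\le\|Z\|$, $\|Z_n-Z\|_2\to0$, $\|\delta_s(Z_n)-\bar\delta_s(Z)\|_2\to 0$. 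The main obstacle is exactly this last approximation: controlling the $L^2$ norm of $\delta_s$ of the truncation uniformly, which requires the operator-Lipschitz / Dirichlet-form estimates rather than naive algebra. Self-adjointness of $Z$ (or reduction to the self-adjoint case) is used here so that functional calculus applies.
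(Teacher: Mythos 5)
Your proposal is correct and takes essentially the same route as the paper: the elementary-tensor formula is obtained by the same adjointness calculation using the derivation property (the paper moves $a^*$ and $b^*$ inside via $\langle\delta_i(Z),a\otimes b\rangle=\langle\delta_i(a^*Zb^*)-\delta_i(a^*)Zb^*-a^*Z\delta_i(b^*),1\otimes 1\rangle$, which is the same Leibniz argument you describe), the $\Delta_s$ formula follows by termwise substitution as in your Step 2, and for the density claim with $\|Z_n\|\le\|Z\|$ the paper simply cites proposition 6 and remark 7 of \cite{Dab08}, which is precisely the functional-calculus/truncation argument you sketch in Step 3. One small remark: to conclude $(A\otimes A)^p\subset D(\delta^*)$ you do not need any density or closedness argument at all — the explicit formula on elementary tensors, together with the fact that $\delta_i(A)\subset A\otimes A$ so every element of $(A\otimes A)^p$ is a finite sum of such tensors, already places it in the domain by linearity, so the sentence about "$\delta^*$ is closed" and "dense in the graph sense" can be dropped.
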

\begin{proof}
For $Z\in D(\delta_i)$, it suffices to notice using the derivation property and then the realness of the derivation :
\begin{align*}
\langle& \delta_i(Z),a\o b \rangle_{H(W^*(\alpha_{T-s}(A),\eta E_B)}=\langle \delta_i(a^*Zb^*)-\delta_i(a^*)Zb^*-a^*Z\delta_i(b^*),1\o 1 \rangle_{H(W^*(\alpha_{T-s}(A),\eta E_B)}
\\&=\langle Z,a\overline{\xi}_{i,s}b \rangle_{H(W^*(\alpha_{T-s}(A),\eta E_B)}-\tau(Z^*(m\circ(1\otimes \eta E_B)((\delta_i(a^*))^*))b)-\tau(Z^*a(m\circ\eta\circ E_B\otimes 1(\delta_i(b^*))^*).
\\&=\langle Z, a\overline{\xi}_{i,s}b-[m\circ(1\otimes \eta\circ E_B)(\delta_i(a))]b-a[m\circ(\eta\circ E_B\otimes 1)(\delta_i(b))]\rangle
\end{align*}
Recall $(c\otimes d)^*=(d^*\otimes c^*)$ so that we used by traciality  $\tau((a^*Zc)^*\eta E_B(d^*))=\tau(c^*Z^*a \eta E_B(d^*))=\tau[Z^*a(m\circ\eta E_B\o 1)((c\o d)^*)],$ and a symmetric variant. Since $\delta_i$ are valued in $A\o A$ by assumption, the second statement is now obvious. For the last statement see e.g. proposition 6 and remark 7 in 
 \cite{Dab08}.

\end{proof}

\subsection{Reversed free Brownian motion}
We will need the following assumption (essentially coming from \cite{Vo5} and \cite{Vo6} in cases of free Brownian motion and liberation processes as we will see in the next section \ref{ApplicationSec} bellow). To state one of the main assumptions using the solution of a PDE, we use the notation of section \ref{timeIto}. 

\begin{minipage}{15,5cm} \textbf{Assumption (C)}:
\begin{enumerate}
\item[(0)]There is a dense countable $\Q$-subalgebra $\mathscr{A}$ of $A$ such that $\alpha_{T-s}(\mathscr{A})$  is still a core of $\delta_s$ for any $s$ and the solution of our SDE satisfy assumption (B).
\item $s\in[0,T)\mapsto \overline{\xi}_{s}$ is left continuous with right limits when seen as valued in $L^2(M)^n.$ Moreover it is continuous except on countably many points.
\item $\exists C>0,||\overline{\xi}_{i,s}||<\frac{C}{\sqrt{T-s}},s<T$
\item $\exists D\geq 0,\alpha>0\forall t<s<T,\ ||E_{W^*(\overline{X}_{1,t},...,\overline{X}_{i,t})}(\overline{\xi}_{s,i})-\overline{\xi}_{t,i}||_2\leq D(s-t)^\alpha\frac{1}{\sqrt{T-s}}.$ 
\item The filtration $\mathcal{F}_s$ is continuous.
\item For $s\in[0,T)$, the derivation $\delta_s$ defined on $\alpha_{T-s}(A)$ as $\delta_s(\alpha_{T-s}(X))=\alpha_{T-s}\otimes \alpha_{T-s}(\delta(X))$ extends to a densely defined closable derivation $\hat{\delta}_s$ on $\overline{\mathcal{F}}_{s,alg}=Alg(B,\alpha_{T-u}(A),u\leq s, S_{T-u}-S_{T-s})$ in such a way that $\hat{\delta}_s( S_{T-u}-S_{T-s})=0,u<s,$ and with the adaptedness property : for $t\geq s$,  $\hat{\delta}_s(\overline{\mathcal{F}}_{s,alg}\cap \mathcal{F}_{T-t}))\subset \mathcal{H}( W^*(\overline{\mathcal{F}}_{s,alg}\cap \mathcal{F}_{T-t}),\eta E_B).$ Moreover, $\hat{\Delta}_s=\hat{\delta}_s^*\overline{\hat{\delta}_s}$ extends $\Delta_s:=\delta_s^*\overline{\delta_s}$.
Finally, for any $0\leq s\leq t\leq T$,  $D(\hat{\delta}_s)\subset D(\hat{\delta}_t)$ and  there is some $C\geq 0$ independent of $s,t$ such that, for any $U\in D(\hat{\delta}_s):$ $$||\hat{\delta}_t(U)||\leq C||\hat{\delta}_s(U)||.$$
\item For any $P\in A$, for any $s\leq T$, there exists  paths $(K_t^s(P),L_t^s(P))_{t\in [0,s]}\in C^1([0,s], C^2(A_1,...,A_n:B))^2 $ such that $K_s^s(P)=L_s^s(P)=P$ and for $t\leq s$ we have, with $\Delta_{Q,t}=\Delta_{Q}^{\tau_t}$, the equation in $\mathcal{A}$ :
$$\frac{\partial K_t^s(P)}{\partial t}+\Delta_{Q,t}(K_t^s(P))=0,$$
$$\frac{\partial L_t^s(P)}{\partial t}-\Delta_{Q,T-t}(L_t^s(P))=0.$$

\end{enumerate} 
\end{minipage}

\begin{remark}
\label{continuityFiltration}
Note that the filtration $\mathcal{F}_s=W^*(X_{t},S_t,t\leq s,X\in A)$ is obviously left continuous since $W^*(\mathcal{F}_u,u<s)=\mathcal{F}_s$ since from the definition of Brownian motion and of the solution $X_{t},S_t$ are continuous in $M$. Assumption (C.4) is mainly an assumption of right continuity. Likewise, note that $\overline{\mathcal{F}}_s=W^*(\overline{X}_{t},S_T-S_t,t\leq s,X\in A)$ is also left continuous for the same reason.

\end{remark}

We start by proving an immediate consequence :

\begin{lemma}\label{C7}Assumption (C) implies that for any $P\in A$ and $s\leq t$ :
$$||\delta_{T-s}(\alpha_{s}(K_{s}^{t}(P))||\leq C ||\delta_{T-t}\alpha_{t}(P)||.$$
\end{lemma}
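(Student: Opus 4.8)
The statement to prove, Lemma~\ref{C7}, asks for the bound $\|\delta_{T-s}(\alpha_{s}(K_{s}^{t}(P)))\|\leq C\,\|\delta_{T-t}\alpha_{t}(P)\|$ for $P\in A$ and $s\leq t$. The natural approach is to recognize that $\delta_{T-s}(\alpha_s(\cdot))$ is, by the very definition given in (C.5), the derivation $\hat\delta_{T-s}$ (more precisely its restriction to $\alpha_s(A)$), and that the family $\hat\delta_u$ comes with the comparison estimate $\|\hat\delta_t(U)\|\leq C\|\hat\delta_s(U)\|$ for $0\leq s\leq t\leq T$ on $D(\hat\delta_s)$. So the plan is: first reduce the left-hand side to something involving $\hat\delta$ evaluated at a later parameter applied to an element for which the $\hat\delta$-norm can be computed in terms of $\delta_{T-t}\alpha_t(P)$; then the monotonicity/comparison clause of (C.5) finishes the job.

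\textbf{Key steps.} First I would observe that $K_t^s(P)$ solves the backward PDE $\partial_t K_t^s(P)+\Delta_{Q,t}(K_t^s(P))=0$ with terminal condition $K_s^s(P)=P$, so that $t\mapsto \alpha_t(K_t^s(P))$ is, by the time-dependent Ito formula of Proposition~\ref{TimeIto}, a martingale: the drift terms cancel exactly because of the PDE, leaving only the stochastic-integral part, hence $\alpha_t(K_t^s(P))=E_{\mathcal F_t}[\alpha_s(K_s^s(P))]=E_{\mathcal F_t}[\alpha_s(P)]$ — wait, I must be careful with the direction: here $K_s^t(P)$ with $s\leq t$ means we run from time $s$ up to terminal time $t$ where $K_t^t(P)=P$, so $\alpha_s(K_s^t(P))=E_{\mathcal F_s}[\alpha_t(P)]$. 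Thus $\delta_{T-s}(\alpha_s(K_s^t(P)))=\hat\delta_{T-s}(E_{\mathcal F_s}[\alpha_t(P)])$. Now $\alpha_t(P)\in \mathcal F_t\cap \overline{\mathcal F}_{T-s,alg}$-type considerations and the adaptedness clause of (C.5) — "$\hat\delta_s(\overline{\mathcal F}_{s,alg}\cap\mathcal F_{T-t})\subset\mathcal H(\dots)$" — together with the fact that conditional expectation onto $\mathcal F_s$ commutes appropriately with $\hat\delta_{T-s}$ (a standard property: $E_{\mathcal F_s}$ restricted correctly, and $\hat\delta_{T-s}$ vanishes on the "future increments" $S_{T-u}-S_{T-(T-s)}$, i.e. on what is being projected out) give $\hat\delta_{T-s}(E_{\mathcal F_s}[\alpha_t(P)]) = (E_{\mathcal F_s}\otimes E_{\mathcal F_s})\,\hat\delta_{T-s}(\alpha_t(P))$ or at least $\|\hat\delta_{T-s}(E_{\mathcal F_s}[\alpha_t(P)])\|\leq \|\hat\delta_{T-s}(\alpha_t(P))\|$. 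Finally apply the comparison estimate of (C.5) with parameters $T-t\leq T-s$: since $\alpha_t(P)\in D(\hat\delta_{T-t})$ with $\hat\delta_{T-t}(\alpha_t(P))=\delta_{T-t}\alpha_t(P)$, and $D(\hat\delta_{T-t})\subset D(\hat\delta_{T-s})$, we get $\|\hat\delta_{T-s}(\alpha_t(P))\|\leq C\|\hat\delta_{T-t}(\alpha_t(P))\|=C\|\delta_{T-t}\alpha_t(P)\|$, which is the claim.

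\textbf{Main obstacle.} The delicate point is the commutation/contractivity of conditional expectation with the closed derivation $\hat\delta_{T-s}$: one needs that $E_{\mathcal F_s}$ maps $D(\hat\delta_{T-s})$ into itself and does not increase the $\hat\delta$-norm, which relies precisely on the adaptedness property $\hat\delta_s(\overline{\mathcal F}_{s,alg}\cap\mathcal F_{T-t})\subset\mathcal H(W^*(\overline{\mathcal F}_{s,alg}\cap\mathcal F_{T-t}),\eta E_B)$ and on $\hat\delta_{T-s}$ killing the Brownian increments after time $s$ — this is exactly the structural content loaded into (C.5), so it should go through, but it is the step requiring genuine care rather than bookkeeping. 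A secondary subtlety is checking that $K_s^t(P)$ indeed lands in a domain where all these operators (in particular $\delta_{T-s}\alpha_s$) are defined, which follows from $K_t^s(P)\in C^1([0,s],C^2(A_1,\dots,A_n:B))$ and the extension properties of $\delta$ and $\alpha_s$ to $C^2$ recorded in Section~\ref{timeIto}; I would dispatch this quickly by invoking those continuity statements. Everything else is a matter of assembling the pieces in the right order: martingale representation via the PDE and Proposition~\ref{TimeIto}, identification $\alpha_s(K_s^t(P))=E_{\mathcal F_s}[\alpha_t(P)]$, contractivity of $E_{\mathcal F_s}$ on the $\hat\delta$-domain, and the uniform comparison constant $C$ from (C.5).
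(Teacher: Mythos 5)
Your proposal is correct and follows essentially the same route as the paper: use (C.6) together with Proposition~\ref{TimeIto} to write $\alpha_t(P)=\alpha_s(K_s^t(P))+\int_s^t\delta_{T-u}(\alpha_u(K_u^t(P)))\#dS_u$, observe via the adaptedness clause of (C.5) that $\hat\delta_{T-s}$ of the stochastic integral is orthogonal to $\mathcal H(\mathcal F_s,\eta E_B)$ so that $\delta_{T-s}(\alpha_s(K_s^t(P)))$ equals the projection of $\hat\delta_{T-s}(\alpha_t(P))$ onto $\mathcal H(\mathcal F_s,\eta E_B)$, and conclude with the uniform comparison constant $C$ of (C.5). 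The only difference is presentational: the paper carries out the orthogonality of the stochastic-integral term explicitly (polynomial case, uniformity of $C$, closability), whereas you correctly identify this commutation/contractivity of $E_{\mathcal F_s}$ with $\hat\delta_{T-s}$ as the delicate step but leave it as a sketch.
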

\begin{proof}Using (C.6) and proposition \ref{TimeIto} $$
\alpha_{t}(P)=\alpha_{s}(K_{s}^{t}(P))+\int_s^t\delta_{T-u}(\alpha_{u}(K_{u}^{t}(P))\#dS_u.$$
Since $K_{u}^{t}(P)\in C^2(A_1,...,A_n:B)$ it is easy to see $\delta_{T-u}(\alpha_{u}(K_{u}^{t}(P))\#(S_{u+v}-S_u)\in D(\hat{\delta}_{T-u})$ for $v\geq 0$ and $\hat{\delta}_{T-u}[\delta_{T-u}(\alpha_{u}(K_{u}^{t}(P))\#(S_{u+v}-S_u))] $ is orthogonal to $ \mathcal{H}(\mathcal{F}_u,\eta E_B).$ From (C.5), 
$\delta_{T-u}(\alpha_{u}(K_{u}^{t}(P))\#(S_{u+v}-S_u)\in D(\hat{\delta}_{T-s})$, $u\geq s$. From adaptedness, $\hat{\delta}_{T-s}(\overline{\mathcal{F}}_{T-s,alg}\cap \mathcal{F}_{u}))\subset \mathcal{H}( W^*(\overline{\mathcal{F}}_{T-s,alg}\cap \mathcal{F}_{u}),\eta E_B),$ one sees from the polynomial case that we also have $\hat{\delta}_{T-s}[\delta_{T-u}(\alpha_{u}(K_{u}^{t}(P))\#(S_{u+v}-S_u))] $ is orthogonal to $ \mathcal{H}(\mathcal{F}_s,\eta E_B).$ From the uniformity of the constant $C$ in $(C.5)$ and closability, one extend this to stochastic integrals and see $\int_s^t\delta_{T-u}(\alpha_{u}(K_{u}^{t}(P))\#dS_u\in D(\hat{\delta}_{T-s})$ with $\hat{\delta}_{T-s}(\int_s^t\delta_{T-u}(\alpha_{u}(K_{u}^{t}(P))\#dS_u)$ orthogonal to $ \mathcal{H}(\mathcal{F}_s,\eta E_B).$ As a consequence, $\alpha_{t}(P)\in D(\hat{\delta}_{T-s})$ and from the equation above 

\noindent $E_{\mathcal{H}(\mathcal{F}_s,\eta E_B)}(\hat{\delta}_{T-s}(\alpha_{t}(P)))=\delta_{T-s}(\alpha_{s}(K_{s}^{t}(P))$ and thus applying (C.5) again, one gets the concluding inequalities :
$$||\delta_{T-s}(\alpha_{s}(K_{s}^{t}(P))||\leq ||\hat{\delta}_{T-s}(\alpha_{t}(P)))||\leq C||{\delta}_{T-t}(\alpha_{t}(P)))||.$$
\end{proof}

Let us define our tentative reversed free Brownian motion $$\overline{S}_{i,t}:=S_{i,T-t}-S_{i,T}+\int_0^tds\overline{\xi}_{i,s}.$$
The integral makes sense in Bochner's sense by our assumption.
and $\overline{S}_{i,t}\in M,t<T$. 
 Note that, from our assumptions, one deduces for $u<t$: \begin{align*}||\int_u^tds\overline{\xi}_{i,s}||_2^2&=\int_u^tds_1\int_u^tds_2\tau(\overline{\xi}_{i,s_1}\overline{\xi}_{i,s_2})=2\int_u^tds(t-s)||\overline{\xi}_{i,s}||_2^2+\frac{DC(t-s)^{\alpha+1}}{(\alpha+1)\sqrt{T-t}}\\&\leq2C^2\int_u^tds\frac{t-s}{T-s}+\frac{DC(t-u)^{\alpha+3/2}}{(\alpha+1)(\alpha+3/2)\sqrt{T-t}}=O(t-u).
 \end{align*}

 We now prove :
 
 \begin{proposition}
 With the notation above and under assumption (C.0,1,2,3,6), $\overline{S}_{i,t}t\in[0,T]$ is a free Brownian motion relative to $B$ of covariance $\eta$ adapted to the filtration $\overline{\mathcal{F}}_s=W^*(B,\alpha_{T-t}(X), X\in A,t\in [0,s],\overline{S}_{i,t},t\in [0,s])$.
 \end{proposition}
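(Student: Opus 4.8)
The plan is to apply the free Lévy characterization of a free Brownian motion, Theorem~\ref{FreeLevy}, to the candidate process $\overline S_{i,t}=S_{i,T-t}-S_{i,T}+\int_0^tds\,\overline\xi_{i,s}$ with respect to the reversed filtration $\overline{\mathcal F}_s$. So I must check adaptedness and the three numbered conditions of that theorem. Adaptedness is almost immediate: $S_{i,T-t}-S_{i,T}$ and all $\alpha_{T-u}(X)$ are in $\overline{\mathcal F}_s$ for $u\le s$ by definition, and the Bochner integral $\int_0^t\overline\xi_{i,s}\,ds$ lies in $\overline{\mathcal F}_s$ since each $\overline\xi_{i,u}\in L^2(W^*(\alpha_{T-u}(A)))\subset\overline{\mathcal F}_s$; the integrability needed for the Bochner integral is (C.2). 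Condition (2), the size estimate $\tau(|U_{t,s}|^4)\le K(t-s)^{3/2}$, $\tau(|V_{t,s}|^2)\le K(t-s)^2$, is handled by splitting $\overline S_{i,t}-\overline S_{i,s}=(S_{i,T-t}-S_{i,T-s})+\int_s^t\overline\xi_{i,u}\,du$: the first term is a genuine free-Brownian increment so its fourth moment is $O((t-s)^{3/2})$ (take it as $U$), and the drift term has $L^2$-norm$^2$ equal to $O(t-s)$ — wait, that is not $O((t-s)^2)$, so the drift term must be split further, isolating a piece that is genuinely $O((t-s)^2)$ in $L^2$; this is exactly where (C.3) and the displayed computation $\|\int_u^t\overline\xi_{i,s}ds\|_2^2=O(t-u)$ preceding the statement must be refined — I expect to write $\int_s^t\overline\xi_{i,u}du = \int_s^t(\overline\xi_{i,u}-E_{W^*(\overline X_{\cdot,s})}\overline\xi_{i,u})du + \int_s^t E_{W^*(\overline X_{\cdot,s})}\overline\xi_{i,u}\,du$, bound the first by (C.4) giving $O((t-s)^{\alpha}(t-s)/\sqrt{T-t})$ which for small $t-s$ beats $(t-s)^{3/4}$ so it goes into $U$, and absorb the conditional-expectation-of-drift part together with the martingale verification.

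**The martingale property (condition (1)) and the covariance (condition (3)).** The heart of the matter is showing $E_{\overline{\mathcal F}_s}(\overline S_{i,t})=\overline S_{i,s}$, i.e. $E_{\overline{\mathcal F}_s}(S_{i,T-t}-S_{i,T-s}) = -\int_s^t E_{\overline{\mathcal F}_s}(\overline\xi_{i,u})\,du$. The left side is a backward conditional expectation of a forward-Brownian increment; to evaluate it I would use the forward SDE (Assumption (B)) together with the PDE from (C.6) and Lemma~\ref{C7}. Concretely, by Proposition~\ref{TimeIto} applied to $K^t_\cdot(P)$ one has, for $s\le u\le t$, $\alpha_t(P)=\alpha_s(K^t_s(P))+\int_s^t\delta_{T-v}(\alpha_v(K^t_v(P)))\#dS_v$; the point of this representation is that it expresses $\alpha_t(P)$ as an $\overline{\mathcal F}_s$-measurable element plus a forward stochastic integral whose backward conditional expectation onto $\overline{\mathcal F}_s=\overline{\mathcal F}_{T-(T-s)}$ can be computed. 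Pairing such representations against $S_{i,T-t}-S_{i,T-s}$, or rather against test elements of $\overline{\mathcal F}_s$, and using Voiculescu's formula (Lemma~\ref{VoiculescuFormula}) to recognize $\delta^*$ in terms of $\overline\xi$, one should recover precisely $-\int_s^t E_{\overline{\mathcal F}_s}\overline\xi_{i,u}\,du$; this is the free analogue of the Nelson/Föllmer/Pardoux computation and the reason the conjugate variable is the correct drift correction. For condition (3), the quadratic-covariation-type identity $\tau(Z_t^kAZ_t^lC)=\tau(Z_s^kAZ_s^lB)+(t-s)1_{k=l}\tau(A\eta(E_B(C)))+o(t-s)$, I would again split $\overline S$ into its Brownian part and its drift: the Brownian part contributes the term $(t-s)1_{k=l}\tau(A\eta(E_B(C)))$ exactly (it is literally a free-Brownian increment with covariance $\eta$, reversed), and all cross terms with the drift, as well as the drift–drift term, are $o(t-s)$ by Cauchy–Schwarz using $\|\int_s^t\overline\xi\,du\|_2=O(\sqrt{t-s})$ and boundedness/continuity of the relevant operators, with the boundary control near $t=T$ coming from (C.2) and (C.3). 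Continuity of $s\mapsto\overline\xi_s$ in $L^2$ off a countable set (C.1) and left/right limits guarantee the error terms are uniform enough.

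**Main obstacle.** The genuinely hard step is the martingale identity $E_{\overline{\mathcal F}_s}(S_{i,T-t}-S_{i,T-s})=-\int_s^tE_{\overline{\mathcal F}_s}(\overline\xi_{i,u})du$: it requires correctly interfacing the forward SDE, the backward filtration, the PDE solutions $K^t_v$ from (C.6), and Voiculescu's $\delta^*$-formula, and it is the place where the whole philosophy of the paper (that the reversed drift is the conjugate variable) is actually cashed out. I expect the remaining conditions of Theorem~\ref{FreeLevy} to be comparatively routine moment estimates, modulo careful bookkeeping of the $1/\sqrt{T-t}$ singularities near the terminal time. A subtlety to watch: the proposition only invokes (C.0,1,2,3,6) — not (C.4) or (C.5) — so the argument for condition~(2) and the martingale property must be arranged to use only those hypotheses (in particular, the $O(t-u)$ bound on the drift's $L^2$-norm squared, established just before the statement, presumably must be upgraded or reorganized so the martingale increment decomposition $U_{t,s}+V_{t,s}$ works without (C.4), perhaps by absorbing the drift entirely into $U_{t,s}$ and checking $\tau(|U_{t,s}|^4)=O((t-s)^{3/2})$ via $\|\int_s^t\overline\xi\,du\|_4$ estimates from (C.2)–(C.3)).
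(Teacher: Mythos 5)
Your strategy coincides with the paper's: verify the free L\'evy characterization (Theorem~\ref{FreeLevy}), obtain the martingale property from the PDE in (C.6) via the time-dependent It\^o formula (equation~\eqref{ItoPDE}), control the moments with (C.2)--(C.3), and get the covariance by expanding into the Brownian part plus $o(t-s)$ drift contributions; and, as you note parenthetically at the end, the correct treatment of condition~(2) is to put the entire increment $\overline{S}_{i,t}-\overline{S}_{i,s}$ into $U_{t,s}$ and bound $\|\int_s^t\overline{\xi}_{i,u}\,du\|_4^4=O((t-s)^2)\le O((t-s)^{3/2})$ on $[0,\tau]$, $\tau<T$, rather than chase a $V$-piece with $L^2$-norm squared $O((t-s)^2)$, which does not close with only $\alpha>0$.

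There is, however, one genuine gap and two smaller slips. The gap: you propose to verify the martingale identity by ``pairing against test elements of $\overline{\mathcal F}_s$,'' but that is not directly possible. The essential step the paper uses is the free-markovianity hypothesis from (B.2) (hence from (C.0)): since $\overline{S}_{i,t}-\overline{S}_{i,s}\in \mathcal F_{T-s}$ and $\mathcal F_{T-s}$, $\overline{\mathcal F}_s$ are free with amalgamation over $W^*(B,\alpha_{T-s}(A))$, the conditional expectation $E_{\overline{\mathcal F}_s}$ on this increment reduces to $E_{W^*(B,\alpha_{T-s}(A))}$, and \emph{only then} does the It\^o/PDE computation suffice. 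You should state this reduction and then make the cancellation explicit: writing $X_u^{T-s}=\alpha_u(K_u^{T-s}(X))$ and expanding $Y_{T-s}\alpha_{T-s}(X)$ by It\^o, the surviving drift integrand is $\tau(\overline{\xi}_{i,u}X_u^{T-s})-\tau\bigl(m(\eta E_B\alpha_u)\otimes\alpha_u\delta_i(K_u^{T-s}(X))\bigr)$, which vanishes because $\overline{\xi}_{i,u}=\delta_{i,u}^*(1\otimes1)$ gives $\tau(\overline{\xi}_{i,u}X_u^{T-s})=\langle 1\otimes1,\delta_{i,u}(X_u^{T-s})\rangle$. This is the raw definition of the conjugate variable, not really Lemma~\ref{VoiculescuFormula}; and Lemma~\ref{C7} should not be invoked here, since it relies on (C.5), which is deliberately excluded from the hypotheses. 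The smaller slips: the bound $D(s-t)^\alpha/\sqrt{T-s}$ you attribute to (C.4) is (C.3), whereas (C.4) is continuity of the forward filtration and is not assumed; and because of the $1/\sqrt{T-\tau}$ factors, the argument first establishes the free Brownian property on $[0,\tau]$ for $\tau<T$ and must then be extended to $[0,T]$ using the improved estimates available once the process is known to be a free Brownian motion on $[0,T)$ --- this closing extension should appear explicitly.
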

\begin{proof}We will also write $\mathcal{F}_s=W^*(B,\alpha_{t}(X), X\in A,t\in [0,s],{S}_{i,t},t\in [0,s])$
We use Paul L\'evy's Thm for free Brownian motion \cite{Dab10,BGC} recalled in Theorem \ref{FreeLevy} above. We first check this on $[0,\tau]$ for $\tau<T$. Fix $t>s$.

Of course $\overline{S}_{i,0}=0$ and since $\overline{S}_{i,t}-\overline{S}_{i,s}={S}_{i,T-t}-{S}_{i,T-s}+\int_s^tdu\overline{\xi}_{i,u}$. Using free markovianity in Assumption (B.2),   $\mathcal{F}_{T-s}$ and $\overline{\mathcal{F}}_s$ are free with amalgamation over $\mathcal{F}_{T-s}\cap\overline{\mathcal{F}}_s=W^*(B,\alpha_{T-s}(X), X\in A)$ so that, since $\overline{S}_{i,t}-\overline{S}_{i,s}\in \mathcal{F}_{T-s}$, we can compute  $E(\overline{S}_{i,t}-\overline{S}_{i,s}|\overline{\mathcal{F}}_s)=E({S}_{i,T-t}-{S}_{i,T-s}+\int_s^tdu\overline{\xi}_{i,u}|W^*(\alpha_{T-s}(A))$. 

Using Proposition \ref{TimeIto} and assumption (C.6), one gets for any $X\in A$:

\begin{align}\label{ItoPDE}\alpha_{T-s}(X)=\alpha_{T-s}(K_{T-s}^{T-s}(X))=\alpha_{0}(K_{0}^{T-s}(X))+\int_0^{T-s}(\alpha_{u}\o \alpha_u)(\delta(K_{u}^{T-s}(X)))\#dS_u.\end{align}
Write $X_v^{T-s}=\alpha_{0}(K_{0}^{T-s}(X))+\int_0^{v}(\alpha_{u}\o \alpha_u)(\delta(K_{u}^{T-s}(X)))\#dS_u,$ for the version stopped at $v\leq T-s.$ Comparing the equation above with its analogue on $[v,T-s]$, we easily get the alternative formula : $X_v^{T-s}=\alpha_v(K_{v}^{T-s}(X)).$

By Ito formula again, if we consider the process $Y_{T-s}={S}_{i,T-t}-{S}_{i,T-s}+\int_s^tdu\overline{\xi}_{i,u}$, for $T-s\geq T-t$, one can compute for any $X\in A$:
\begin{align*}Y_{T-s}\alpha_{T-s}(X)&=-\int_{T-t}^{T-s}dS_{i,u}X_u^{T-s}-Y_{T-u}\alpha_u\otimes\alpha_u(\delta(K_{u}^{T-s}(X)))\#dS_{u}\\&+\int_{T-t}^{T-s}du[\overline{\xi}_{i,u}X_u^{T-s}-m(\eta E_B\alpha_u)\otimes\alpha_u\delta(K_{u}^{T-s}(X))]\end{align*}

Taking the trace and using the definition of $\overline{\xi}_{i,u}$ one deduces :
$$\tau(Y_{T-s}\alpha_{T-s}(X))=0
.$$

Thus, we have checked using the PDE in (C.6) the crucial martingale property, $E(\overline{S}_{i,t}-\overline{S}_{i,s}|\overline{\mathcal{F}}_s)=0$ as expected.


Then we have to estimate the $L^4$ norm :

$||\overline{S}_{i,t}-\overline{S}_{i,s}||_4\leq 2\sqrt{t-s}+||\int_s^tdu\overline{\xi}_{i,u}||_4.$

$$||\int_s^tdu\overline{\xi}_{i,u}||_4^4=\int_s^tdu_4\int_s^{u_4}du_1\int_s^{u_1}du_2\int_s^{u_2}du_34\sum_{\sigma\in\mathscr{S}_3}\tau(\overline{\xi}_{i,u_{\sigma(1)}}\overline{\xi}_{i,u_{\sigma(2)}}\overline{\xi}_{i,u_{\sigma(3)}}\overline{\xi}_{i,u_1}).
$$ 
so that , for $s\leq t\leq \tau<T$, using (C.3) to project the largest time $u_4$ to the second largest $u_1$ using free markovianity, and then use (C.2) on other times, we get :
\begin{align*}||&\int_s^tdu\overline{\xi}_{i,u}||_4^4\\&\leq \int_s^tdu_1\int_s^{u_1}du_2\frac{C}{\sqrt{T-u_2}}\int_s^{u_2}du_3\frac{C}{\sqrt{T-u_3}}\times24\left((t-u_1)||\overline{\xi}_{i,u_1}||_2^2+\frac{CD(t-u_1)^{\alpha+1}}{(\alpha+1)\sqrt{T-t}}\right)\\&\leq 12C^2\int_s^tdu_1(u_1-s)^2\frac{(t-u_1)}{T-u_1}||\overline{\xi}_{i,u_1}||_2^2+12C^2(t-s)^2\frac{CD(t-s)^{\alpha+1}}{(\alpha+1)^2\sqrt{T-t}}
\\&\leq 12C^2(t-s)^2\int_0^\tau du_1||\overline{\xi}_{i,u_1}||_2^2+24C^2(t-s)^2\frac{CD(t-s)^{\alpha+1}}{(\alpha+1)^2\sqrt{T-\tau}}.
\end{align*} 
The last term is a $O((t-s)^2)$ for $\tau<T$ as expected

For later convenience, let us write $E_s=E_{W^*(B,\alpha_{T-s}(A))}$.

It remains to compute for $D,E\in \overline{\mathcal{F}}_s$ a covariance term but from free markovianity, one gets for $s<t$:
\begin{align*}&\tau( (\overline{S}_{i,t}-\overline{S}_{i,s})D(\overline{S}_{i,t}-\overline{S}_{i,s})E)
\\&=\tau(({S}_{i,T-t}-{S}_{i,T-s}+\int_s^tdu\overline{\xi}_{i,u})E_{s}(D)({S}_{i,T-t}-{S}_{i,T-s}+\int_s^tdu\overline{\xi}_{i,u})E_{s}(E))
\\& =\tau(({S}_{i,T-s}-{S}_{i,T-t})[E_{t}(E_{s}(D))]({S}_{i,T-s}-{S}_{i,T-t})E_{t}(E_{s}(E))])\\ &+\tau(({S}_{i,T-t}-{S}_{i,T-s})[E_{s}(D)-E_t(E_s(D))]({S}_{i,T-t}-{S}_{i,T-s})E_{t}(E_{s}(E))])\\ &+
\tau(({S}_{i,T-t}-{S}_{i,T-s})[E_{t}(E_{s}(D))]({S}_{i,T-t}-{S}_{i,T-s})[E_{s}(E)-E_{t}(E_{s}(E))])\\ &+\tau(({S}_{i,T-t}-{S}_{i,T-s})[E_{s}(D)-E_{t}(E_{s}(D))]({S}_{i,T-t}-{S}_{i,T-s})[E_{s}(E)-E_{t}(E_{s}(E))])
\\&+\tau(({S}_{i,T-t}-{S}_{i,T-s})E_{s}(D)(\int_s^tdu\overline{\xi}_{i,u})E_{s}(E))
\\&+\tau((\int_s^tdu\overline{\xi}_{i,u})E_{s}(D)({S}_{i,T-t}-{S}_{i,T-s})E_{s}(E))
\\&+\tau((\int_s^tdu\overline{\xi}_{i,u})E_{s}(D)(\int_s^tdu\overline{\xi}_{i,u})E_{s}(E))
\\&=\tau(\eta (E_B([E_{t}(E_{s}(D))]))[E_{t}(E_{s}(E))])+o(t-s)\\&=\tau(\eta (E_B(D))E)+o(t-s).\end{align*}
We used in the next-to-last line that ${S}_{i,T-s}$ is a free Brownian motion via its covariance and the estimates  $||{S}_{i,T-s}- {S}_{i,T-t}||\leq 2\sqrt{t-s}$, $(t-s)||(E_s-E_t)(C)||_2=(t-s)||C-E_{\mathcal{F}_t}(C)||_2=o(t-s)$ (for $C\in W^*(B,\alpha_{T-s}(A))$ following by Kaplansky density Theorem from the case $C\in B\langle\overline{X}^i_s,i=1,...,N\rangle$) and :
\begin{align*}|\int_s^tdu_1\int_s^{t}du_2&\tau((\overline{\xi}_{i,u_1})E_{s}(A)(\overline{\xi}_{i,u_2})E_{s}(D))|\\&=2\int_s^tdu|(t-u)\tau(\overline{\xi}_{i,u}E_{s}(A)\overline{\xi}_{i,u}E_{s}(D))|+\frac{CD(t-u)^{\alpha+1}}{(\alpha+1)\sqrt{T-t}}||A||\ ||D||\\&\leq 2||A||\ ||D||[(t-s)\int_s^tdu||\overline{\xi}_{i,u}||_2^2+\frac{CD(t-s)^{\alpha+2}}{(\alpha+1)(\alpha+2)\sqrt{T-t}}]=o(t-s),\end{align*}
with again the last estimate for $s<t\leq \tau<T$ and similarly (case $A=D=1$) $||\int_s^tdu\overline{\xi}_{i,u}||_2^2=o(t-s)$. This concludes since having a free Brownian motion on $[0,\tau]$ for any $\tau<T$ easily gives a free Brownian motion on $[0,T]$, using the improved estimates obtained on $[0,T)$ (from the newly proven fact that we have a free Brownian motion), their extension by continuity to $[0,T]$ and the free Paul L\'evy's Thm again). 
\end{proof}
 \subsection{Preliminaries on certain unbounded operators and stochastic integrals}
 We now want to obtain a SDE for the reversed process.

To improve our previous result and deduce regularity results for the conjugate variable, we will need several technical preliminaries gathered in the following lemma.

We will also need to consider an ad hoc space of ``Regular bi-processes'' adapted to the reversed filtration, in order to manipulate first Riemann like sums before applying density results.

\begin{align*}
&\mathcal{R}([0,T])=\{s\mapsto V_s\in (D(\hat{\delta_s})\cap\overline{\mathcal{F}}_s)\o_{alg}(D(\hat{\delta_s})\cap\overline{\mathcal{F}}_s)\ |\ s\mapsto V_s\in C^{1}([0,T],M\hat{\o}M),\\& s\mapsto (\hat{\delta_s}\otimes 1(V_s))\oplus (1\otimes \hat{\delta_s})(V_s)\in \mathcal{B}([0,T],\mathcal{H}(M,\eta\circ E_B)\hat{\o}M\oplus M\hat{\o}{H}(M,\eta\circ E_B)),\\ & s\mapsto (\hat{\delta_s}\otimes \hat{\delta_s}(V_s))\in \mathcal{B}(([0,T],\mathcal{H}(M,\eta\circ E_B)\hat{\o}\mathcal{H}(M,\eta\circ E_B)),\\&
(s,u)\mapsto (\hat{\delta_s}\otimes 1(V_u))\oplus (1\otimes \hat{\delta_s})(V_u)\in
\\ & \mathcal{B}([0,T],C^1([0,.],\mathcal{H}(M,\eta\circ E_B)\hat{\o}L^2(M)\oplus L^2(M)\hat{\o}{H}(M,\eta\circ E_B))
\},
\end{align*}
where $\mathcal{B}$ denotes a space of bounded function, for instance the last space has a norm :

$$\sup_{s\in [0,T]}\sup_{u\in[0,s]}||(\hat{\delta_s}\otimes 1(V_u))\oplus (1\otimes \hat{\delta_s})(V_u)||+||\frac{\partial}{\partial u}(\hat{\delta_s}\otimes 1(V_u))\oplus (1\otimes \hat{\delta_s})(V_u)||.$$

Also recall that $L^\infty([0,T],M):=L^\infty([0,T],dLeb)\bar{\o} M$ the von Neumann algebraic tensor product has predual $L^1([0,T],M)$ defined in Bochner's sense, but that $L^\infty([0,T],L^1(M))$ is defined (when $M$ has separable predual) in a measure theoretic way as the space of weak-* scalarly measurable functions bounded in the right norm (which is not $L^\infty$ in Bochner's sense). Recall also we wrote $E_s=E_{W^*(B,\alpha_{T-s}(A))}$. We will use freely non-commutative Dirichlet forms techniques, see e.g. \cite{CiS}. Note that we will use crucially that the non-reversed filtration is better behaved e.g. continuous. (C.5) can also be seen as such a regularity in the original direction of time.

We gather our preliminary technicalities in the next :

\begin{lemma}\label{Unbounded}Assume assumption (C).
\begin{enumerate}
\item Let $s\in [0,T]$, then for any $U\in \overline{\mathcal{F}}_s\cap D(\hat\delta_s)$,  we have $E_s(U)\in D(\delta_s)$ and $||\delta_{s}E_{s}(U)||\leq ||\hat{\delta}_{s}(U)||.$ 
\item $L^2_{biad}([0,T],L^2(M))=\{f\in L^2([0,T],L^2(M)) f(s)\in L^2(W^*(\alpha_{T-s}(A))) a.e.\}$ is a closed subspace of $L^2([0,T],L^2(M))$ (If $P_s$ is the projection on $L^2(W^*(\alpha_{T-s}(A)))$, $P:f\mapsto(s\mapsto P_s(f(s))$ is the corresponding projection).
Likewise $L^2_{biad}([0,T],\mathcal{H}(M,\eta\circ E_B))=\{f\in L^2([0,T],\mathcal{H}(M,\eta\circ E_B)) f(s)\in \mathcal{H}(W^*(\alpha_{T-s}(A),\eta\circ E_B) a.e.\}$ is a closed subspace of $L^2([0,T],\mathcal{H}(M,\eta\circ E_B)).$
\item Let us define $$D(\mathcal{E})=\{f\in L^2_{biad}([0,T],L^2(M)), f(s)\in D(\overline{\delta_s})a.e.,s\mapsto\overline{\delta_s}f(s)\ \text{measurable}\ \int_0^T || \overline{\delta_s}f(s)||_2^2ds<\infty\}.$$ Then $\mathcal{E}(f)=\int_0^T || \overline{\delta_s}f(s)||_2^2ds$ is a closed form on $D(\mathcal{E}),$ defining an operator $\delta_{[0,T]}:D(\mathcal{E})\to 
L^2_{biad}([0,T],\mathcal{H}(M,\eta\circ E_B))$ by $\delta(f))(s)=\delta_s(f(s)).$
\item 
 The generator $\Delta$ of $\mathcal{E}$ is characterized as follows (we write $\Delta_s={\partial_s}^*\overline{\partial_s}$)
 :

$$D(\Delta)=\{f\in L^2_{biad}([0,T],L^2(M)), f(s)\in D({\Delta_s})a.e.,s\mapsto\Delta_sf(s)\ \text{meas.}\ \int_0^T || \Delta_sf(s)||_2^2ds<\infty\}$$

and  
 for any $f\in D(\Delta)$, $\Delta(f)(s)=\Delta_s(f(s))$ a.e.
Likewise,  $$D(\delta_{[0,T]}^*)=\{f\in L^2_{biad}([0,T],\mathcal{H}(M,\eta\circ E_B)), f(s)\in D(\overline{\delta_s}^*)a.e.,s\mapsto\delta_s^*f(s)\ \text{meas.}\ \int_0^T || \delta_s^*f(s)||_2^2ds<\infty\}$$
 and for any $f\in D(\delta_{[0,T]}^*)$ for a.e. $s$ $(\delta_{[0,T]}^*f)(s)=\delta_s^*(f(s))$. As a consequence, for any $f\in D(\Delta)$, $\delta_{[0,T]}f\in D(\delta_{[0,T]}^*)$ and $\Delta f=\delta_{[0,T]}^*\delta_{[0,T]}f.$
\item 
Let $U_s=E_s(V_s))$ with $V\in \mathcal{R}([0,T])$. 
Then  for $v<T$ we have :$$\int_u^v U_s\#d\overline{S}_s +\int_{T-v}^{T-u}U_{T-s}\#d{S}_s-\int_u^v\delta^*_s(U_s)ds=0.$$
 \item $\{s\mapsto U_s=E_s(V_s)):V\in \mathcal{R}([0,T])\}$ is dense in $L^2_{biad}([0,T],\mathcal{H}(M,\eta\circ E_B))$.
\item  $L^\infty_{biad}([0,T],M):=L^2_{biad}([0,T],L^2(M))\cap 
 L^\infty([0,T],M)$ is a finite von Neumann algebra with GNS construction $L^2_{biad}([0,T],L^2(M))$ with respect to the canonical induced trace $\tau_{[0,T]}(f)=\int_0^T\tau(f(s))ds,$ and with predual $$L^1_{biad}([0,T],L^1(M)):=\overline{L^2_{biad}([0,T],L^2(M))}^{L^1}\subset L^1([0,T],L^1(M)).$$ $\mathcal{E}$ is a (completely) Dirichlet form on $L^\infty_{biad}([0,T],M)$ and the canonical $L^1$-extension of its generator (cf. \cite[section 1.4]{Pe04}, called $\Psi$ there)  has domain :
$$D(\Delta^1)=\{f\in L^\infty_{biad}([0,T],M), f(s)\in D({\Delta_s^1})a.e.,s\mapsto\Delta_s^1f(s)\ \text{meas.}\ \int_0^T || \Delta_s^1f(s)||_1ds<\infty\},$$

and moreover, for any $f\in D(\Delta^1)$ (the corresponding $L^1$-extension for $\Delta$), for almost every $s$, $\Delta^1(f)(s)=\Delta^1_sf(s).$
\end{enumerate}
\end{lemma}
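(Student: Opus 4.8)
I would prove this first, as a statement purely about the derivation $\hat\delta_s$ and the conditional expectation $E_s=E_{W^*(B,\alpha_{T-s}(A))}$. The adaptedness clause of (C.5) (take $t=s$, using $\overline{\mathcal{F}}_{s,alg}\cap\mathcal F_{T-s}=W^*(B,\alpha_{T-s}(A))$ up to closures) together with ``$\hat\Delta_s$ extends $\Delta_s:=\delta_s^*\overline{\delta_s}$'' and the core property (C.0) force $\hat\delta_s$ to restrict to $\overline{\delta_s}$ on $L^2(W^*(\alpha_{T-s}(A)))$; hence the self-adjoint semigroup $e^{-t\hat\Delta_s}$ leaves $L^2(W^*(\alpha_{T-s}(A)))$ invariant, so — being self-adjoint — it also leaves the orthogonal complement invariant, and therefore $E_s$ commutes with $e^{-t\hat\Delta_s}$, with the resolvent, and with $\hat\Delta_s^{1/2}$, and preserves $D(\hat\Delta_s^{1/2})=D(\hat\delta_s)$. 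For $U\in\overline{\mathcal{F}}_s\cap D(\hat\delta_s)$ this yields $E_s(U)\in D(\overline{\delta_s})$ with $\overline{\delta_s}E_s(U)=E_{\mathcal H(W^*(\alpha_{T-s}(A)),\eta E_B)}(\hat\delta_s U)$, and the asserted inequality is just the contractivity of an orthogonal projection.

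\textbf{Parts (2)--(4), (6), (7): the direct integral / Dirichlet form package.} These all rest on two elementary facts: $s\mapsto\alpha_{T-s}(X)$ is $\|\cdot\|_2$-continuous (by (A.4)), hence Borel on a countable generating set of $A$; and by (C.0) the algebras $\alpha_{T-s}(\mathscr A)$ are a common core for all the $\overline{\delta_s}$. So $s\mapsto W^*(B,\alpha_{T-s}(A))$ is a measurable field of von Neumann algebras, $s\mapsto P_s=E_s$ a measurable field of projections, and $s\mapsto\overline{\delta_s},\overline{\delta_s}^*,\Delta_s$ measurable fields of closed operators. Then $P:f\mapsto(s\mapsto P_sf(s))$ is a self-adjoint idempotent contraction with range exactly $L^2_{biad}$, which is thus closed (and the same with $\mathcal H(M,\eta\circ E_B)$), giving (2); the form $\mathcal E$ and the operators $\delta_{[0,T]},\delta_{[0,T]}^*,\Delta$ are the direct integrals $\int_{[0,T]}^\oplus(\cdot)_s\,ds$ of their pointwise counterparts, and a direct integral of closed forms, resp.\ closed operators, is closed with the stated pointwise domains (the range of $\delta_{[0,T]}$ landing in the biadapted sub-correspondence because each $\overline{\delta_s}$ is the closure of a derivation valued in $\mathcal H(W^*(\alpha_{T-s}(A)),\eta E_B)$), giving (3) and (4); the identity $\Delta f=\delta_{[0,T]}^*\delta_{[0,T]}f$ is the pointwise $\Delta_s=\overline{\delta_s}^*\overline{\delta_s}$ integrated. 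For (7), $L^\infty_{biad}([0,T],M)=\int_{[0,T]}^\oplus W^*(B,\alpha_{T-s}(A))\,ds$ is a finite von Neumann algebra with the stated trace and GNS space, each $\mathcal E_s(\xi)=\|\overline{\delta_s}\xi\|_2^2$ is a completely Dirichlet form (Cipriani--Sauvageot, $\overline{\delta_s}$ being a closable derivation into a Hilbert bimodule, see \cite{CiS}), hence so is $\mathcal E$, with Markov semigroup $\int^\oplus e^{-t\Delta_s}\,ds$, and the description of the $L^1$-extension $\Delta^1$ follows from the general theory of $L^1$-extensions of Dirichlet forms (\cite[section 1.4]{Pe04}), the $L^1$-generator of a direct integral of forms being the direct integral of the $L^1$-generators. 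For (6): at each $s$, $\alpha_{T-s}(\mathscr A)\subset W^*(\alpha_{T-s}(A))\subset\overline{\mathcal{F}}_s$ and $\alpha_{T-s}(\mathscr A)\subset D(\overline{\delta_s})\subset D(\hat\delta_s)$ (by (C.0), (C.5)), so $E_s(\alpha_{T-s}(a)\otimes\alpha_{T-s}(b))=\alpha_{T-s}(a)\otimes\alpha_{T-s}(b)$ and such elementary tensors are dense in $\mathcal H(W^*(\alpha_{T-s}(A)),\eta\circ E_B)$; one prescribes such values on a grid, interpolates, and mollifies in $s$ by the backward convolution $V*\delta_n(s)=\int_0^{1/n}\delta_n(x)V_{s-x}\,dx$ as in the proof of Lemma \ref{simple} — legitimate since by (C.5) the domains $D(\hat\delta_u)$ increase in $u$ with uniformly bounded transition constants, so $\hat\delta_s$ passes inside the convolution — thereby landing in $\mathcal R([0,T])$ (including the joint $C^1$ conditions) while still approximating $s\mapsto E_s(V_s)$.

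\textbf{Part (5): the forward--backward identity.} Fix $0\le u<v<T$ and, by linearity, $V_s=a_s\otimes b_s$ with $a,b$ regular $\overline{\mathcal{F}}_\cdot\cap D(\hat\delta_\cdot)$-valued paths; put $U_s=E_s(V_s)$, so $U_s\in\mathcal H(W^*(\alpha_{T-s}(A)),\eta E_B)$ by (1). I would approximate the backward integral by a left-point Riemann sum over a reversed-time grid $u=t_0<\dots<t_N=v$ (valid by the isometry defining $\int\#d\overline{S}$ and the $\mathcal H$-continuity of $s\mapsto U_s$, which holds by the regularity of $V$, by (1), and by (C.1)). Substituting $\overline{S}_{i,t_{k+1}}-\overline{S}_{i,t_k}=(S_{i,T-t_{k+1}}-S_{i,T-t_k})+\int_{t_k}^{t_{k+1}}\overline\xi_{i,w}\,dw$, the $\int\overline\xi$ part converges to $\int_u^v U_s\#\overline\xi_s\,ds$ (Riemann sum of a continuous integrand, by (C.1)--(C.2)), while the $S$-part, after reindexing the grid in forward time ($r_j=T-t_{N-j}$), is the right-point sum $-\sum_j U_{T-r_j}\#(S_{r_j}-S_{r_{j-1}})$; subtracting the left-point sum $\sum_j U_{T-r_{j-1}}\#(S_{r_j}-S_{r_{j-1}})$, which converges to the forward integral $\int_{T-v}^{T-u}U_{T-s}\#dS_s$ (note $U_{T-s}=E_{T-s}(V_{T-s})\in W^*(\alpha_s(A))\subset\mathcal F_s$ is forward-adapted), reduces everything to evaluating $-\lim_n\sum_j(U_{T-r_j}-U_{T-r_{j-1}})\#(S_{r_j}-S_{r_{j-1}})$, a free quadratic covariation. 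By Proposition \ref{TimeIto} applied to the forward process (whose filtration is Brownian, or trivial, by (B.2)), the forward-martingale increment of $r\mapsto U_{T-r}=E_{T-r}(a_{T-r})\otimes E_{T-r}(b_{T-r})$ is, to leading order, $(\overline{\delta_{T-r}}E_{T-r}(a_{T-r})\#dS_r)\otimes E_{T-r}(b_{T-r})+E_{T-r}(a_{T-r})\otimes(\overline{\delta_{T-r}}E_{T-r}(b_{T-r})\#dS_r)$ — here (1) gives $E_{T-r}(a_{T-r})\in D(\overline{\delta_{T-r}})$ and (C.5) reads the martingale part of $\alpha_r$ off the derivation — so by the $L^2$-convergence of quadratic variations of Propositions \ref{ItoSimple}/\ref{ItoLp} the limit equals $\int_{T-v}^{T-u}\bigl([m\circ(1\otimes\eta E_B)(\overline{\delta_{T-r}}E_{T-r}(a_{T-r}))]E_{T-r}(b_{T-r})+E_{T-r}(a_{T-r})[m\circ(\eta E_B\otimes1)(\overline{\delta_{T-r}}E_{T-r}(b_{T-r}))]\bigr)dr$. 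Substituting $r=T-s$ and comparing with Voiculescu's formula (Lemma \ref{VoiculescuFormula}) for $\delta_s^*(E_s(a_s)\otimes E_s(b_s))$, this is exactly $\int_u^v(U_s\#\overline\xi_s-\delta_s^*(U_s))\,ds$; assembling the three pieces gives the claimed identity. All error terms (drifts, the $O(h)$ gap between $E_{\mathcal F_{r_{j-1}}}(U_{T-r_j})$ and $U_{T-r_{j-1}}$, the contributions of the genuine $s$-dependence of $a_s,b_s$) vanish in $L^2$ using the boundedness and joint-$C^1$ conditions of $\mathcal R([0,T])$ together with the Burkholder--Gundy bounds of Lemma \ref{Bpa}.

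\textbf{Main obstacle.} Everything except (5) is routine direct-integral and non-commutative Dirichlet form bookkeeping. The real difficulty is (5): once time-reversed, $U_s$ becomes an anticipating functional of the forward Brownian motion, so the right-endpoint Riemann sums that naturally appear differ genuinely from the forward stochastic integral, and the whole content of the lemma is that this discrepancy is precisely the Voiculescu correction $\int(U_s\#\overline\xi_s-\delta_s^*(U_s))\,ds$. Making this rigorous needs both part (1) (to give meaning to $\overline{\delta_{T-r}}E_{T-r}(\cdot)$ and to commute limits past the conditional expectations) and the full strength of the $\mathcal R([0,T])$ regularity, in particular the joint $C^1$-control of $(s,u)\mapsto\hat\delta_s\otimes1(V_u)$, which is exactly what keeps the mixed terms generated by the $s$-dependence of the integrand under control.
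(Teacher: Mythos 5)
Your proposal follows the paper's route part by part: (2)--(4) and (7) are the same direct-integral/Dirichlet-form bookkeeping (the paper handles measurability by a.e.\ subsequence extraction and the factorization $P_s=E_{\overline{\mathcal F}_s}E_{\mathcal F_s}$, and pins down the generator by testing against $g(\cdot)\,\alpha_{T-\cdot}(P)$ for $P\in\mathscr A$ and $g\in L^\infty([0,T],\C)$), (6) is the same backward-convolution density argument, and (5) is the same reduction of the forward--backward discrepancy to a free quadratic covariation evaluated via the Ito propositions and Lemma \ref{VoiculescuFormula}. The one real variation is part (1), where you argue abstractly that $\hat\Delta_s\supset\Delta_s$ with both self-adjoint forces $L^2(W^*(\alpha_{T-s}(A)))$ to be reducing for $\hat\Delta_s$ (hence $E_s$ commutes with $\hat\Delta_s^{1/2}$ and contracts the form norm), whereas the paper proceeds by the short adjoint computation $\langle\hat\delta_s U,\delta_s V\rangle=\langle U,\hat\Delta_s V\rangle=\langle U,\Delta_s V\rangle=\langle E_s U,\Delta_s V\rangle$ for $V\in D(\Delta_s)$ and reads off $E_s(U)\in D(\overline{\delta_s})$ from the quadratic-form bound --- your spectral argument is valid but the paper's is a more elementary route to the same inequality.
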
 

 \begin{proof}
 \begin{enumerate}
 \item Take $V\in D(\Delta_s)$, so that by assumption (C.5) $$\langle(\hat{\delta}_s(U)),\delta_s(V)\rangle=\langle U,\hat{\Delta}_s(V)\rangle=\langle U,\Delta_s(V)\rangle=\langle E_u(U),\Delta_s(V)\rangle=\langle {\delta}_s(E_s(U)),\delta_s(V)\rangle,$$
 where the next to last equality shows $E_s(U)\in D(\delta_s)$, and the inequality by duality.
 \item Take $f_n\in L^2_{biad}([0,T],L^2(M))$ converging to $f$ in $L^2$. Up to extraction it also converges almost everywhere but a.e. $P_sf_n(s)=f_n(s)$ ($\forall n$) and $||P_sf_n-P_sf||_2^2(s)\to 0$, i.e. by uniqueness of the limit $P_sf(s)=f(s)$ almost everywhere.

 For the statement about $P$, first define $P$ on the dense subset $L^\infty([0,T],\C)\otimes L^2(M,\tau)$ with value $L^2_{biad}([0,T],L^2(M))$. For this we have to check that for a constant $b\in L^2(M)$ $P_s(b)$ is Bochner measurable, then it will be obviously in  $L^2_{biad}([0,T],L^2(M))$ and the same will be true for the value of the map on the dense set above. But $P_s=E_{\overline{\mathcal{F}}_s}E_{\mathcal{F}_s}$ (by freeness with amalgamation in assumption (B.2)) and by assumption $\mathcal{F}_s$ is a continuous filtration i.e. $E_{\mathcal{F}_s}(b)$ is continuous in $L^2(M)$ thus again approximated by density by the set $L^\infty([0,T],\C)\otimes L^2(M,\tau)$ so that it suffices to check that for $b\in L^2(M,\tau)$, $E_{\overline{\mathcal{F}}_s}(b)$ is also Bochner measurable. But as we noted in remark \ref{continuityFiltration}, $\overline{\mathcal{F}}_s$ is left continuous thus so is $E_{\overline{\mathcal{F}}_s}(b)$ in $L^2$, thus it is Bochner measurable, as we wanted. We can now easily extend $P$ to the operator stated in the statement.

 \item Take a Cauchy sequence $f_n$ in $D(\mathcal{E})$ with norm $\mathcal{E}_1^{1/2}$. Recall $\mathcal{E}_1(x)=||x||_2^2+ \mathcal{E}(x)$ is the squared graph norm or a coercive version of $\mathcal{E}$. Thus $f_n,  s\mapsto\overline{\partial_s}f_n(s)$ converge to $f, g$ in $L^2$, and moreover modulo extraction, they converge almost surely and in $L^2$. Since $\overline{\partial_s}$ is closed, on the almost sure convergence set $f(s)\in D(\overline{\partial_s})$, $g(s)=\overline{\partial_s}f(s)$ this concludes to $f\in D(\mathcal{E})$ and $D(\mathcal{E})$ complete.
 \item
The fact that $D(\Delta)$ is included in the domain of the generator is obvious. Conversely, take $f$ in the domain of the generator  so that by definition $f\in D(\mathcal{E})$ and for any $v\in D(\mathcal{E})$ $v\mapsto \int_0^Tds\langle \delta_sf(s),\delta_sv(s)\rangle$ defines a continuous linear form on a dense subset of $L^2_{biad}$ thus on $L^2_{biad}$. Thus by duality, we have $w\in L^2_{biad}$ such that  $\int_0^Tds\langle \delta_s(f(s)),\delta_sv(s)\rangle=\int_0^Tds\langle w(s),v(s)\rangle.$ Since for any $g\in L^\infty([0,T],\C)$, $gv$ is again in $D(\mathcal{E})$, we get by scalar-$L^p$ spaces duality that 
$\langle \delta_s(f(s)),\delta_sv(s)\rangle=\langle w(s),v(s)\rangle$ as function of $s$ in $L^{1}([0,T],\C)$
thus especially almost everywhere.

Now we can take the specific $v(s)=\alpha_{T-s}(P)$ for $P$ in a dense countable $\Q$-subalgebra of $A$ which is still a core of $\delta_s$ after taking the image by $\alpha_{T-s}$ for any $s$. Thus, in the equality above $v(s)$ can be replaced a.e. by any $v\in D(\delta_s)$ and since a.e. $||w(s)||_2<\infty$  one deduces $f(s)\in D(\Delta_s)$ a.e. and $\Delta_s(f(s))=w_s$. Thus $s\mapsto\Delta_s(f(s)) \in  L^2_{biad}$. This concludes the computation of the domain and the equality $(\Delta f)(s):=w(s)=\Delta_s(f(s)).$

The proof of the second statement is similar.

 \item 
 We have by construction and continuity (right and left) of the filtration $\mathcal{F}_s$ the continuity of $U_s=E_{\mathcal{F}_{T-s}}(V_s)$ say in $L^2$ so that :
  $$\int_v^u U_s\#d\overline{S}_s=\lim_{m\to\infty}\sum_{k=1}^m(U_{v+k(u-v)/m})\#(\overline{S}_{v+(k+1)(u-v)/m}-\overline{S}_{v+k(u-v)/m})$$
  
   $$\int_{T-u}^{T-v}U_{T-s}\#d{S}_s=\lim_{n\to\infty}\sum_{k=1}^m(U_{v+(k+1)(u-v)/m}\#({S}_{T-(v+k(u-v)/m)}-{S}_{T-(v+(k+1)(u-v)/m)})$$
We have to compute the quadratic variation like quantity :

\begin{align*}   &\int_v^u U_s\#d\overline{S}_s+\int_{T-u}^{T-v}U_{T-s}\#d{S}_s=\lim_{m\to\infty}\\&-\sum_{k=1}^m(U_{v+k(u-v)/m}-U_{v+(k+1)(u-v)/m}))\#({S}_{T-(v+k(u-v)/m)}-{S}_{T-(v+(k+1)(u-v)/m)})
\\&+\sum_{k=1}^m(U_{v+k(u-v)/m}))\#(\int_{v+k(u-v)/m}^{v+(k+1)(u-v)/m}\overline{\xi}_{w}dw)
\end{align*}

where we wrote with a small abuse of notation : $U_{v+k(u-v)/m}-U_{v+(k+1)(u-v)/m}=R_{k,m}+W_{k,m}$ with $$R_{k,m}=\iota(E_{v+(k+1)(u-v)/m}(V_{v+k(u-v)/m}-V_{v+(k+1)(u-v)/m})),$$ $$W_{k,m}=\iota(
E_{v+k(u-v)/m}(V_{v+k(u-v)/m})-E_{v+(k+1)(u-v)/m}(V_{v+k(u-v)/m})),$$ with $E_{v+k(u-v)/m}$ the tensor product of the corresponding conditional expectation in projective tensor product, $\iota$ the canonical map for $M\hat{\o}M $ to $L^2(M\o M)$. Since $V$ is $C^1$, $||R_{k,m}||\leq \frac{1}{m}\sup_{t\in [0,T]}||\frac{\partial}{\partial t}V_{t}||$ so that $\lim_{m\to\infty}\sum_{k=1}^mR_{k,m}\#({S}_{T-(v+k(u-v)/m)}-{S}_{T-(v+(k+1)(u-v)/m)})=0$ e.g. in $M.$ Let us write for convenience :$S^{(k,n)}:=({S}_{T-(v+k(u-v)/m)}-{S}_{T-(v+(k+1)(u-v)/m)}$

We now estimate $$[A]:=||\sum_{k=1}^mW_{k,m}\#S^{(k,n)}-\sum_i\int_u^vdw(m\circ(1\otimes \eta\circ E_B\otimes 1)(\overline{\delta}_{i,w}\otimes 1+1\o\overline{\delta}_{i,w})(U_{w})\|_1$$
We first consider the case in which $V_{v+k(u-v)/m}=a_{k,m}\o b_{k,m}$ and for convenience we write $E_{k,m}=E_{v+k(u-v)/m}$.

We know from equation \eqref{ItoPDE} extended from $X$ to $E_{k,m}(a_{k,m})$ that $$(E_{k,m}-E_{k+1,m})a_{k,m}=\int^{T-(v+k(u-v)/m)}_{T-(v+(k+1)(u-v)/m)}(\delta_{T-s}(E_{T-s}\circ E_{k,m}(a_{k,m}))\#dS_s.$$ By point (1) we have just proved $||\delta_{(v+k(u-v)/m)}E_{k,m}(a_{k,m})||\leq ||\hat{\delta}_{(v+k(u-v)/m)}(a_{k,m})||$ and  moreover using equation (\ref{ItoPDE}) again and by lemma \ref{C7} for $T-s\leq T-(v+k(u-v)/m):$  $$||\delta_{s}E_{s}\alpha_{T-(v+k(u-v)/m)}(P)||=||\delta_{s}(\alpha_{T-s}K_{T-s}^{T-v+k(u-v)/m)}(P)||\leq C ||\delta_{(v+k(u-v)/m)}\alpha_{T-(v+k(u-v)/m)}(P)||_2$$  for $P\in A$ so that the inequality extends to $D(\delta_{(v+k(u-v)/m)})$ and thus :

$$||(E_{k,m}-E_{k+1,m})a_{k,m}||_2\leq C\sqrt{\frac{1}{m}}||\hat{\delta}_{(v+k(u-v)/m)}(a_{k,m})||,$$

\begin{align*}||&\sum_{k=1}^m(E_{k,m}-E_{k+1,m})(a_{k,m})({S}^{(k,n)})(E_{k,m}-E_{k+1,m})(b_{k,m})||_1\\&\leq 2C\frac{1}{\sqrt{m}}\sup_k||\hat{\delta}_{(v+k(u-v)/m)}(a_{k,m})||\ ||\hat{\delta}_{(v+k(u-v)/m)}(b_{k,m})||\end{align*}

From Ito formula (version of Proposition \ref{ItoSimple}) applied to $S$ and \eqref{ItoPDE} and from orthogonality, we know that :

\begin{align*}&[B]=||\sum_{k=1}^m[(E_{k+1,m}(a_{k,m}) ({S}^{(k,n)}(E_{k,m}-E_{k+1,m})(b_{k,m}))\\ &-\sum_k\int_{T-(v+(k+1)(u-v)/m)}^{T-(v+k(u-v)/m)}dw(m\circ(1\otimes \eta\circ E_B\otimes 1)(E_{T-w}(a_{k,m})\otimes(\delta_{T-w}(E_{T-w}\circ E_{k,m}(b_{k,m}))] \|_1\\&\leq\left(\sum_{k=1}^m||\int_{T-(v+(k+1)(u-v)/m)}^{T-(v+k(u-v)/m)}(E_{k+1,m}(a_{k,m})dS_w(E_{T-w}-E_{k+1,m})(b_{k,m}))\right.\\ &\left.+\int_{T-(v+(k+1)(u-v)/m)}^{T-(v+k(u-v)/m)}(E_{k+1,m}(a_{k,m})(S_w-{S}_{T-(v+(k+1)(u-v)/m)})\delta_{T-w}(E_{T-w})(b_{k,m}))\#dS_w\|_2^2\right)^{1/2}\\ &+||\sum_k\int_{T-(v+(k+1)(u-v)/m)}^{T-(v+k(u-v)/m)}dw(m\circ(1\otimes \eta\circ E_B\otimes 1)((E_{T-w}-E_{k+1,m})(a_{k,m})\otimes(\delta_{T-w}(E_{T-w}(b_{k,m}))] \|_1
\end{align*}
so that \begin{align*}[B]&\leq \sqrt{2\sum_{k=1}^m||a_{k,m}||_2^2\int_{T-(v+(k+1)(u-v)/m)}^{T-(v+k(u-v)/m)}dw||(E_{T-w}-E_{k+1,m})(b_{k,m}))||_2^2}\\ &+\sqrt{2\sum_{k=1}^m||a_{k,m}||^2\int_{T-(v+(k+1)(u-v)/m)}^{T-(v+k(u-v)/m)}dw||S_w-{S}_{T-(v+(k+1)(u-v)/m)})||^2 ||\delta_{T-w}(E_{T-w})(b_{k,m})\|_2^2}
\\&+\sum_{k=1}^m\int_{T-(v+(k+1)(u-v)/m)}^{T-(v+k(u-v)/m)}dw||((E_{T-w}-E_{k+1,m})(a_{k,m})||_2||\delta_{T-w}(E_{T-w}(b_{k,m}))] \|_2
\\&
\leq \frac{4C}{m}\sqrt{\sum_{k=1}^m||a_{k,m}||^2||\hat{\delta}_{(v+k(u-v)/m)}(b_{k,m})||^2}
\\&+ \frac{C}{m\sqrt{m}}\sum_{k=1}^m||\hat{\delta}_{(v+k(u-v)/m)}(a_{k,m})||||\hat{\delta}_{(v+k(u-v)/m)}(b_{k,m})||\end{align*}

where we used in the last line our previously shown inequalities.

We will  of course also  use a symmetric bound. 

All the previous inequalities can be extended to any $V$ by linearity and density.
It remains to bound :
\begin{align*}||\sum_{k=1}^m\sum_i&\int_{T-(v+(k+1)(u-v)/m)}^{T-(v+k(u-v)/m)}dw(m\circ(1\otimes \eta\circ E_B\otimes 1)1\otimes\overline{\delta}_{i,T-w}[(E_{T-w}\o E_{T-w})(V_{v+k(u-v)/m}-V_{T-w})]\|_1\\&\leq \sqrt{n}\sum_{k=1}^m\int_{T-(v+(k+1)(u-v)/m)}^{T-(v+k(u-v)/m)}dw||(1\otimes\hat{\delta}_{T-w})(V_{v+k(u-v)/m}-V_{T-w})]\|_{L^2(M)\hat{\o}\mathcal{H}(M,\eta\circ E_B)}
\\&\leq \frac{\sqrt{n}}{m}\sup_{w\in[0,T]}\sup_{s\in[0,w]}||\frac{\partial}{\partial s}(1\otimes\hat{\delta}_w)(V_{s})]\|_{L^2(M)\hat{\o}\mathcal{H}(M,\eta\circ E_B)}\end{align*}

Summarizing our estimates, we got :

\begin{align*}[A]&\leq 2(1+n)C\frac{1}{\sqrt{m}}\sup_{s\in[u,v]}||\hat{\delta}_{s}\o\hat{\delta}_{s}(V_s)||_{\mathcal{H}(M,\eta\circ E_B)\hat{\o}\mathcal{H}(M,\eta\circ E_B)}
\\&+\frac{4C}{\sqrt{m}}\sup_{s\in[u,v]}\left(||(1\otimes\hat{\delta}_s)(V_{s})]\|_{M\hat{\o}\mathcal{H}(M,\eta\circ E_B)}+||(\hat{\delta}_s\o 1)(V_{s})]\|_{\mathcal{H}(M,\eta\circ E_B)\hat{\o} M}\right)
\\&+\frac{\sqrt{n}}{m}\sup_{w\in[0,T]}\sup_{s\in[0,w]}||\frac{\partial}{\partial s}(1\otimes\hat{\delta}_w)(V_{s})]\|_{L^2(M)\hat{\o}\mathcal{H}(M,\eta\circ E_B)}+||\frac{\partial}{\partial s}(\hat{\delta}_w\o 1)(V_{s})]\|_{\mathcal{H}(M,\eta\circ E_B)\hat{\o}L^2(M)}
\end{align*}

And the bound tends to zero as expected.

Similarly, one can estimate for $s\in [v+k(u-v)/m,v+(k+1)(u-v)/m]$ :
\begin{align*}||&(U_{i,v+k(u-v)/m}-(U_{i,s})||_{L^2(M)\hat{\o}L^2(M)}\leq \frac{1}{m}\sup_{t\in [0,T]}||\frac{\partial}{\partial t}V_{t}||_{M\hat{\o}M}\\&+\frac{C^2}{m}||\hat{\delta}_{v+k(u-v)/m}\o\hat{\delta}_{v+k(u-v)/m}(V_{v+k(u-v)/m})||_{\mathcal{H}(M,\eta\circ E_B)\hat{\o}\mathcal{H}(M,\eta\circ E_B)}\\&+\frac{C}{\sqrt{m}}\left(||1\o\hat{\delta}_{v+k(u-v)/m}(V_{v+k(u-v)/m})||_{M\hat{\o}\mathcal{H}(M,\eta\circ E_B)}+||\hat{\delta}_{v+k(u-v)/m}\o 1(V_{v+k(u-v)/m})||_{\mathcal{H}(M,\eta\circ E_B)\hat{\o}M}\right)\end{align*}

Finally we bound :

\begin{align*}||\sum_{k=1}^m(U_{v+k(u-v)/m}))&\#(\int_{v+k(u-v)/m}^{v+(k+1)(u-v)/m}\overline{\xi}_{w}dw-\sum_i\int_u^v(U_{i,s})\#\overline{\xi}_{i,s}ds||_1\\&\leq \sum_{k=1}^m\int_{v+k(u-v)/m}^{v+(k+1)(u-v)/m}\sum_i||(U_{i,v+k(u-v)/m}-(U_{i,s})\#\overline{\xi}_{i,s}||_1ds
\\&\leq \frac{Cn}{m\sqrt{T-v}}\sup_{t\in [0,T]}||\frac{\partial}{\partial t}V_{t}||_{M\hat{\o}M}\\&+\frac{C^3n}{m\sqrt{T-v}}\sup_{t\in [0,T]}||\hat{\delta}_{t}\o\hat{\delta}_{t}(V_{t})||_{\mathcal{H}(M,\eta\circ E_B)\hat{\o}\mathcal{H}(M,\eta\circ E_B)}\\&+\frac{C^2n}{\sqrt{(T-v)m}}\sup_{t\in [0,T]}\left(||1\o\hat{\delta}_{t}(V_{t})||_{M\hat{\o}\mathcal{H}(M,\eta\circ E_B)}+||\hat{\delta}_{t}\o 1(V_{t})||_{\mathcal{H}(M,\eta\circ E_B)\hat{\o}M}\right),
\end{align*}
where we used (C.2) to bound $||\overline{\xi}_{i,s}||.$ Putting everything together and using Voiculescu's formula in lemma \ref{VoiculescuFormula}, this concludes.

\item From free markovianity and (1), its suffices to prove $\mathcal{R}([0,T])$ is dense in $L^2_{ad}([0,T],\mathcal{H}(\overline{\mathcal{F}},\eta\circ E_B)$. It thus suffices to approximate an adapted simple processes $$V_s=\sum_{k=0}^{m-1}1_{[kT/m,(k+1)T/m)}(s)V_{(k)}$$ by $\mathcal{R}([0,T])$ and one can assume by density $V_{(k)}\in (D(\hat{\delta}_{kT/m})\cap\overline{\mathcal{F}}_{kT/m})\o_{alg}(D(\hat{\delta}_{kT/m})\cap\overline{\mathcal{F}}_{kT/m}),$
 so that from assumption (C.5) $V_s\in (D(\hat{\delta_s})\cap\overline{\mathcal{F}}_s)\o_{alg}(D(\hat{\delta_s})\cap\overline{\mathcal{F}}_s)$ for any $s$.
Now $V_t$ is approximated by $V*\varphi_l$ the adapted time convolution with the smooth functions $\varphi_l(x)=l\varphi(lx)$ for $\varphi$ smooth non-negative supported on $[0,1]$ of integral $1$ (cf. proof of lemma \ref{simple}).

It thus remains to check  $V*\varphi_n\in \mathcal{R}([0,T])$
 
 Note that again $(V*\varphi_l)_s\in (D(\hat{\delta_s})\cap\overline{\mathcal{F}}_s)\o_{alg}(D(\hat{\delta_s})\cap\overline{\mathcal{F}}_s)$ since for $s\in [0,T]:$ $$(V*\varphi_l)_s=\int_0^{T\wedge s}duV_u\varphi_l(s-u)du=\sum_{k=0}^{m-1(\wedge sm/T)}V_{(k)}(\int_{[kT/m,(k+1)T/m)}du\varphi_l(s-u) ).$$
 From this expression and the last inequality in assumption (C.5),  $(V*\varphi_l)\in \mathcal{R}([0,T])$ is now easy so that we leave the details to the reader.

\item Clearly, $L^\infty_{biad}([0,T],M)$ is a  von Neumann subalgebra $L^\infty([0,T],M).$ Indeed, it is clearly a $*$-subalgebra and if $v_n\in L^\infty_{biad}([0,T],M)$ converges weakly in $L^\infty([0,T],M)$ it especially converges in $L^2([0,T],L^2(M))$ thus by the point (1) above  the limit is still in $L^2_{biad}([0,T],L^2(M))$ and thus in $L^\infty_{biad}([0,T],M).$  The statement about the finite subalgebra property and the predual follows once the GNS construction identified. But for any positive $f\in L^2_{biad}([0,T],L^2(M))$, the functional calculus $\frac{\alpha f}{\alpha+f}\in L^\infty_{biad}([0,T],M)$ clearly converges to $f$ in $L^2$ when $\alpha\to \infty$, giving the GNS construction for $L^\infty_{biad}([0,T],M)$.

The Dirichlet form statement is now obvious since functional calculus is computed pointwise in time. The statement about $\Delta^1$ follows similarly as the previous point (4) once noted the density result in lemma \ref{VoiculescuFormula} of $\alpha_{T-s}(A).$
\end{enumerate}
 \end{proof}

\subsection{Reversed Markovian Stochastic Integrals and SDE for the reversed process}

From the result of (5) in the previous lemma, one can expect that for any $Y\in D(\Delta)$: $$\int_u^v \overline{\delta}_s(Y_s)\#d\overline{S}_s +\int_{T-v}^{T-u}\overline{\delta}_s(Y_s)\#d{S}_s=\int_u^v\Delta_s(Y_s)ds.$$

However, obtaining this result is not obvious and we can't use directly a mere density argument. We start by proving  what we can expect from this statement that $\int_{T-v}^{T-u}\overline{\partial}_{T-s}(Y_{T-s})\#d{S}_s-\int_u^v\Delta_s(Y_s)ds$
is an $\overline{\mathcal{F}}$-martingale with the right quadratic variation, following the proof of our construction of the reversed Brownian motion. This will the key for obtaining the SDE for the reversed process, from the one from the original one.

\begin{lemma}\label{RevStoInt}Assume Assumption (C) and fix $U\in D(\delta_{[0,T]}^*).$
\begin{enumerate}

\item Let $Z_{v}=\int_{T-v}^{T}U_{T-s}\#d{S}_s-\int_0^v\delta_s^*(U_s)ds, T-v\in[0,T]$ is a $\overline{\mathcal{F}}$-martingale.

\item We have the equality : $||Z_{v}||_2^2=\int_{0}^{v}||U_s||_2^2ds$
 \item $$\int_u^v U_s\#d\overline{S}_s +\int_{T-v}^{T-u}U_{T-s}\#d{S}_s-\int_u^v\delta_s^*(U_s)ds=0.$$
 Especially, for any $Y\in D(\Delta)$ : $$\int_u^v \overline{\delta}_s(Y_s)\#d\overline{S}_s +\int_{T-v}^{T-u}\overline{\delta}_{T-s}(Y_{T-s})\#d{S}_s-\int_u^v\Delta_s(Y_s)ds=0.$$
 Moreover for any $Y\in D(\Delta^1)$ we also have : $$\int_u^v \overline{\delta}_s(Y_s)\#d\overline{S}_s +\int_{T-v}^{T-u}\overline{\delta}_{T-s}(Y_{T-s})\#d{S}_s-\int_u^v\Delta_s^1(Y_s)ds=0.$$
\end{enumerate}
\end{lemma}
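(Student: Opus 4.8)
The plan is to establish the three parts in order; (1) carries the analytic weight, and (2) is the bridge that reduces (3) to a density argument.

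\emph{Part (1).} First, $Z_v\in\overline{\mathcal{F}}_v$: the forward integral $\int_{T-v}^{T}U_{T-s}\#dS_s$ has integrands $U_{T-s}\in\mathcal{H}(W^*(\alpha_s(A)),\eta\circ E_B)$ and is built from the increments $S_{T-t}-S_{T-v}=\overline{S}_t-\overline{S}_v-\int_t^v\overline{\xi}_r\,dr\in\overline{\mathcal{F}}_v$ ($t\le v$), while $\delta_s^*(U_s)\in W^*(\alpha_{T-s}(A))\subset\overline{\mathcal{F}}_v$ for $s\le v$; so it suffices to show $E(Z_v-Z_u\,|\,\overline{\mathcal{F}}_u)=0$ for $u<v$. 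Since $Z_v-Z_u=\int_{T-v}^{T-u}U_{T-s}\#dS_s-\int_u^v\delta_s^*(U_s)\,ds\in\mathcal{F}_{T-u}$ and, by free Markovianity (B.2), $\mathcal{F}_{T-u}$ and $\overline{\mathcal{F}}_u$ are free with amalgamation over $W^*(B,\alpha_{T-u}(A))$, this conditional expectation equals $E_u(Z_v-Z_u)$, and it remains to prove $\tau[(Z_v-Z_u)\alpha_{T-u}(X)]=0$ for all $X\in A$. By the PDE of (C.6) in the form \eqref{ItoPDE}, $\alpha_{T-u}(X)=\alpha_{T-v}(K_{T-v}^{T-u}(X))+\int_{T-v}^{T-u}(\alpha_r\otimes\alpha_r)(\delta(K_r^{T-u}(X)))\#dS_r$, with the stopped-process identity $E_{\mathcal{F}_{T-t}}(\alpha_{T-u}(X))=\alpha_{T-t}(K_{T-t}^{T-u}(X))=:Z_t$, so $E_t(\alpha_{T-u}(X))=Z_t$ for $t\ge u$. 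Because $\int_{T-v}^{T-u}U_{T-s}\#dS_s$ is an $\mathcal{F}$-martingale increment it is $\tau$-orthogonal to $\alpha_{T-v}(K_{T-v}^{T-u}(X))\in\mathcal{F}_{T-v}$, hence the first term of $\tau[(Z_v-Z_u)\alpha_{T-u}(X)]$ equals $\int_{T-v}^{T-u}\tau[m\circ(1\otimes(\eta\circ E_B\circ m)\otimes1)(U_{T-r}\otimes\delta_{T-r}(Z_{T-r}))]\,dr$ by the covariation formula (Proposition \ref{ItoSimple}). Substituting $r=T-t$ and applying the adjoint identity $\tau[m\circ(1\otimes(\eta\circ E_B\circ m)\otimes1)(U_t\otimes\delta_t(Z_t))]=\tau[\delta_t^*(U_t)Z_t]$ — immediate from realness of $\delta$, traciality and $\tau$-symmetry of $\eta\circ E_B$, as in the proof of Lemma \ref{VoiculescuFormula} — this becomes $\int_u^v\tau[\delta_t^*(U_t)Z_t]\,dt=\int_u^v\tau[\delta_t^*(U_t)\alpha_{T-u}(X)]\,dt$ (inserting $E_t$, valid since $\delta_t^*(U_t)\in W^*(\alpha_{T-t}(A))$), i.e.\ the second term. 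Thus $E_u(Z_v-Z_u)=0$.

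\emph{Part (2).} $Z$ being an $\overline{\mathcal{F}}$-martingale with $Z_0=0$, for any partition $0=v_0<\dots<v_N=v$ the increments are orthogonal and $\|Z_v\|_2^2=\sum_k\|Z_{v_{k+1}}-Z_{v_k}\|_2^2$. In $Z_{v_{k+1}}-Z_{v_k}=\int_{T-v_{k+1}}^{T-v_k}U_{T-s}\#dS_s-\int_{v_k}^{v_{k+1}}\delta_s^*(U_s)\,ds$ the drift block has $L^2$-norm $\le(v_{k+1}-v_k)^{1/2}(\int_{v_k}^{v_{k+1}}\|\delta_s^*(U_s)\|_2^2ds)^{1/2}$, so the sum of its squares and, by a second Cauchy--Schwarz, the sum of the cross terms are $O(\mathrm{mesh}^{1/2})$, whereas $\sum_k\|\int_{T-v_{k+1}}^{T-v_k}U_{T-s}\#dS_s\|_2^2=\int_0^v\|U_s\|_2^2\,ds$ by the Ito isometry; letting the mesh $\to0$ gives $\|Z_v\|_2^2=\int_0^v\|U_s\|_2^2\,ds$. (Parts (1)--(2) use only local square-integrability of $\delta_\cdot^*(U_\cdot)$ on $[0,T)$.)

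\emph{Part (3).} Fix $v<T$. By Lemma \ref{Unbounded}(6) choose $U^n_s=E_s(V^n_s)$, $V^n\in\mathcal{R}([0,T])$, with $U^n\to U$ in $L^2_{biad}([0,T],\mathcal{H}(M,\eta\circ E_B))$; by Voiculescu's formula and (C.2), $\|\delta_s^*(U^n_s)\|_2\le C_{V^n}(1+(T-s)^{-1/2})$, so $U^n,U^n-U\in D(\delta_{[0,v]}^*)$ and Parts (1)--(2) apply to both. From Lemma \ref{Unbounded}(5), $\int_u^vU^n_s\#d\overline{S}_s+\int_{T-v}^{T-u}U^n_{T-s}\#dS_s-\int_u^v\delta_s^*(U^n_s)\,ds=0$; the two stochastic integrals converge in $L^2(M)$ (Ito isometry for the free Brownian motion $\overline{S}$ and for $S$), and $\int_u^v\delta_s^*(U^n_s)\,ds\to\int_u^v\delta_s^*(U_s)\,ds$ because $\int_u^v\delta_s^*((U^n-U)_s)\,ds=\int_{T-v}^{T-u}(U^n-U)_{T-s}\#dS_s-(Z^{U^n-U}_v-Z^{U^n-U}_u)\to0$, the bracket vanishing by Part (2). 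Hence the identity holds for $v<T$, and letting $v\uparrow T$ (with $\int_v^T\|U_s\|_2^2ds\to0$ and $\int_v^T\|\delta_s^*(U_s)\|_2\,ds\to0$, as $U\in D(\delta_{[0,T]}^*)$) it holds for $v=T$. The $D(\Delta)$ statement is the case $U=\delta_{[0,T]}(Y)$, with $\delta_s^*(U_s)=\delta_s^*\overline{\delta}_s(Y_s)=\Delta_s(Y_s)$ a.e.\ by Lemma \ref{Unbounded}(4); for $Y\in D(\Delta^1)$, approximate by $Y^n\in D(\Delta)$ with $Y^n\to Y$, $\Delta Y^n\to\Delta^1Y$ in $L^1_{biad}([0,T],L^1(M))$ via the sub-Markovian resolvent of the $L^1$-generator, and pass to the limit — the drift converging in $L^1(M)$, the stochastic integrals via the $L^1$ gradient estimates of the complete Dirichlet form (cf.\ \cite{CiS,Pe04}).

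\emph{Main obstacle.} The crux is Part (1): converting the forward-adapted data — via free Markovianity and the PDE (C.6) — into the identities $E(\,\cdot\,|\,\overline{\mathcal{F}}_u)=E_u(\cdot)$ and the adjoint identity is what everything downstream rests on. The secondary delicate point is the $D(\Delta^1)$ refinement, where $\int\overline{\delta}_s(Y_s)\#d\overline{S}_s$ must be controlled in $L^1$ rather than $L^2$, which is precisely where the complete-Dirichlet-form hypotheses are used.
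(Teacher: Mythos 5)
Your proof is correct and follows essentially the same path as the paper's: free Markovianity in (B.2) to reduce the conditional expectation to $E_u$, the PDE representation \eqref{ItoPDE} from (C.6) together with Itô calculus and the adjointness/realness of $\delta$ for Part (1), martingale orthogonality with the drift block vanishing in the mesh limit for Part (2), and Lemma \ref{Unbounded}(5)--(6) plus Part (2) to pass to the limit in Part (3), with the $D(\Delta^1)$ case handled via the $L^1$-resolvent. The only cosmetic difference is in Part (3): you approximate $\int_u^v\delta_s^*(U_s)\,ds$ directly by rewriting $\int_u^v\delta_s^*((U^n-U)_s)\,ds$ through $Z^{U^n-U}$ and applying Part (2), whereas the paper computes the inner product $\langle\int_u^v U_s\#d\overline{S}_s,\,Z_v-Z_u\rangle=-\int_u^v\|U_s\|_2^2\,ds$ by polarization and deduces the vanishing from the norm identity; both routes use only the $L^2_{biad}$ density of Lemma \ref{Unbounded}(6), which is the point the paper flags.
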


\begin{proof}
\begin{enumerate}
\item The proof is really similar to the one proving markovianity of $\overline{S}_t$, it thus uses (C.6) crucially. Using free markovianity in assumption (B.2),   $\mathcal{F}_{T-s}$ and $\overline{\mathcal{F}}_s$ are free with amalgamation over $\mathcal{F}_{T-s}\cap\overline{\mathcal{F}}_s=W^*(B,\alpha_{T-s}(X), X\in A)$ so that, since $Z_t-Z_s\in \mathcal{F}_{T-s}$ for $t\geq s$, we can compute  $E(Z_t-Z_s|\overline{\mathcal{F}}_s)=E[\int_{T-t}^{T-s}U_{T-s}\#d{S}_s-\int_s^t\delta_s^*(U_s)ds|W^*(\alpha_{T-s}(A))]$. 

By Ito formula in the form of lemma \ref{ItoSimple}, if we consider the process $\hat{Z}_{T-s}=Z_s-Z_t$, for $T-s\geq T-t$, one can compute, using equation (\ref{ItoPDE}) for any $X\in A$:
\begin{align*}&\hat{Z}_{T-s}\alpha_{T-s}(X)=-\int_{T-t}^{T-s}U_{T-u}\#d{S}_uX_u^{T-s}-\hat{Z}_{T-s}\alpha_{T-u}\otimes\alpha_{T-u}(\delta(K_{u}^{T-s}(X)))\#dS_{u}\\&+\int_{T-t}^{T-s}du[
\delta^*_u(U_u)X_u^{T-s}-m\circ(1\o E_B\circ m\o 1)(U_u\o(\alpha_u\otimes\alpha_u\delta(K_{u}^{T-s}(X)))]\end{align*}


Taking the trace and using the definition of $\delta^*_u(Y_u)$ (and since $\delta_u$ is a real derivation) 
 one deduces :
$$\tau(\hat{Z}_{T-s}\alpha_{T-s}(X))=0.$$
One deduces 
$E(Z_t-Z_s|\overline{\mathcal{F}}_s)=0$ as expected.

\item From the martingale property, one deduces as usual :
$$||Z_{v}||_2^2=\sum_{k=1}^m||Z_{v(k+1)/m}-Z_{vk/m}||_2^2.$$

We want to bound for $v\geq u$, \begin{align*}&\left| ||Z_{v}-Z_{u}||_2^2-\int_{T-v}^{T-u}ds||U_{T-s}||_2^2\right|\\&=\left|-2\Re \langle\int_{T-v}^{T-u} U_{T-s}\#d{S}_s,\int_u^v\delta_s^*(U_s)ds\rangle+||\int_u^v\delta_s^*(U_s)ds||_2^2\right|\\& \leq(v-u)\int_u^v||\delta_s^*(U_s)||_2^2ds+\sqrt{v-u}\left(\int_u^v||\delta_s^*(U_s)||_2^2ds+\int_{T-v}^{T-u}ds||U_{T-s}||_2^2\right)
\end{align*}
We only used Cauchy-Schwarz in the last line. Summing up one gets :

$$\left|||Z_{v}||_2^2-\int_{0}^{v}||U_{s}||_2^2ds\right|\leq (\frac{1}{\sqrt{m}}+\frac{1}{{m}})\int_0^v||\delta_s^*(U_s)||_2^2ds+\frac{1}{\sqrt{m}}\int_{T-v}^{T}ds||U_{T-s}||_2^2\to_{m\to \infty} 0$$

\item  To prove the vanishing of the statement, using (2) we only have to check : 

$$\langle\int_u^v U_s\#d\overline{S}_s ,\int_{T-v}^{T-u}U_{T-s}\#d{S}_s-\int_u^v\delta_s^*(U_s)ds \rangle =-\int_{u}^{v}||U_s||_2^2ds$$



From (6) of lemma \ref{Unbounded} there exists $U^{(k)}$ on which we can apply (5) in this same lemma and converging in $L^2_{biad}$ to $U$. By (5) we thus have $$\int_0^v U_s^{(k)}\#d\overline{S}_s +\int_{T-v}^{T}U_{T-s}^{(k)}\#d{S}_s=\int_0^v\delta_s^*(U_s^{(k)})ds.$$

so that using the polarized version of the result in (2), one gets :
\begin{align*}\langle&\int_u^v U_s^{(k)}\#d\overline{S}_s ,\int_{T-v}^{T-u}U_{T-s}\#d{S}_s-\int_u^v\delta_s^*(U_s)ds \rangle 
\\&\langle-\int_{T-v}^{T}U_{T-s}^{(k)}\#d{S}_s+\int_0^v\delta_s^*(U_s^{(k)})ds ,\int_{T-v}^{T-u}U_{T-s}\#d{S}_s-\int_u^v\delta_s^*(U_s)ds \rangle 
\\ &=-\int_{u}^{v}\langle U_{s}^{(k)},U_{s}\rangle ds.\end{align*}

Taking the limit $k\to \infty$, this concludes. (The reader will note this argument indeed only uses the density result in (6) of lemma \ref{Unbounded} and not a stronger core property for $\delta_{[0,T]}^*$)
The second statement for $Y\in D(\Delta)$ is an immediate consequence using lemma \ref{Unbounded} (4). For $Y\in D(\Delta^1)$, a general Dirichlet form result implies $Y_{\alpha}=\frac{\alpha}{\alpha+\Delta}Y\in D(\Delta), $ 
$\Delta Y_\alpha=\frac{\alpha}{\alpha+\Delta}\Delta^1(Y)$ where we used the canonical $L^1$ extension of $\frac{\alpha}{\alpha+\Delta}$ so that we get a domination $||\Delta Y_\alpha||_1\leq ||\Delta^1(Y)||_1.$

Obviously, we have $\int_u^v \overline{\delta}_s(Y_{\alpha,s})\#d\overline{S}_s +\int_{T-v}^{T-u}\overline{\delta}_{T-s}(Y_{\alpha,T-s})\#d{S}_s\to \int_u^v \overline{\delta}_s(Y_s)\#d\overline{S}_s +\int_{T-v}^{T-u}\overline{\delta}_{T-s}(Y_{T-s})\#d{S}_s$ in $L^2(M)$.

Since again by lemma \ref{Unbounded} (4) $Y_{\alpha,s}=\frac{\alpha}{\alpha+\Delta_s}Y_s$ almost everywhere, one gets a.e. $\Delta_s Y_{\alpha,s}\to \Delta^1_sY_s$ weakly in $L^1(W^*(\alpha_{T-s}(A)))$ thus in $L^1(M).$ Thus, since by the characterization of $D(\Delta^1)$ in lemma \ref{Unbounded} (7) $\int_u^v\Delta_s^1(Y_s)ds$ clearly exists, for any $x\in M$, by dominated convergence Theorem, $\tau(\int_u^v\Delta_s(Y_{\alpha,s})dsx)\to \tau(\int_u^v\Delta_s^1(Y_{s})dsx).$ Gathering everything, this concludes to the last equality.

\end{enumerate} 
\end{proof}
We can now easily deduce the SDE satisfied by the reversed process :

\begin{proposition}\label{RevSDE}
For any $X\in A$, $\overline{\alpha}_t(X):=\alpha_{T-t}(X)\in D(\Delta_s)$ and moreover, $\overline{\alpha}(X)\in D(\Delta^1)$  and $\overline{\alpha}_t(X)$ satisfies the SDE :
$$\overline{\alpha}_t(X)=\overline{\alpha}_0(X)-\int_0^tds[\overline{\alpha}_s(\Delta_{Q,s}(X))+\Delta_s\overline{\alpha}_s(X)]+\int_0^t\overline{\alpha}_s\otimes\overline{\alpha}_s(\delta(X))\#d\overline{S}_s.$$
Moreover, $\overline{\alpha}_t$ satisfies assumption $(A)$ with filtration $\overline{\mathcal{F}}$, Brownian motion $\overline{S}_s$, with $q$ arbitrary in $[1,\infty)$ and drift $\overline{Q}_{i,s}$ given by : $\overline{\alpha}_s(\overline{Q}_{i,s})=-\overline{\xi}_{i,s}-\overline{\alpha}_s(Q_i).$

\end{proposition}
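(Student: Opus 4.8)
\noindent The plan is to combine the time--reversed stochastic integral identity of Lemma~\ref{RevStoInt}(3), applied to the path $Y_s=\overline\alpha_s(X)$, with the forward equation (A.4) written between times $T-t$ and $T$: the two contain the same forward stochastic integral $\int_{T-t}^{T}\alpha_s\otimes\alpha_s(\delta(X))\#dS_s$, so eliminating it leaves an equation for $\overline\alpha_t(X)$ driven by $\overline S$, whose drift is then identified via Voiculescu's formula (Lemma~\ref{VoiculescuFormula}). First I would record the membership statements. That $\overline\alpha_t(X)=\alpha_{T-t}(X)\in D(\Delta_t)$ is Lemma~\ref{VoiculescuFormula}, which also gives the explicit value
$$\Delta_t(\overline\alpha_t(X))=\overline\alpha_t\otimes\overline\alpha_t(\delta(X))\#\overline\xi_t-\sum_i m\circ(1\otimes\eta\circ E_B\otimes 1)\big((\delta_{i,t}\otimes1+1\otimes\delta_{i,t})\delta_{i,t}(\overline\alpha_t(X))\big).$$
Since $\delta(X)$ is a finite sum of elementary tensors with entries in $A$, bounding leg by leg with $\|a\xi b\|_1\leq\|a\|\,\|b\|\,\|\xi\|_2$ and using (C.2) yields $\|\Delta_s(\overline\alpha_s(X))\|_1\leq \mathrm{const}\cdot(1+(T-s)^{-1/2})$, which is integrable on $[0,T]$; measurability of $s\mapsto\Delta_s(\overline\alpha_s(X))$ comes from (C.1) and the continuity of $\alpha$. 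As $s\mapsto\overline\alpha_s(X)$ is bounded, continuous and biadapted it lies in $L^\infty_{biad}([0,T],M)$, so by the characterization of $D(\Delta^1)$ in Lemma~\ref{Unbounded}(7) one gets $\overline\alpha(X)\in D(\Delta^1)$.

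Applying Lemma~\ref{RevStoInt}(3) to $Y_s=\overline\alpha_s(X)$, and using $\overline\delta_s(\overline\alpha_s(X))=\overline\alpha_s\otimes\overline\alpha_s(\delta(X))$, $\overline\delta_{T-s}(\overline\alpha_{T-s}(X))=\delta_{T-s}(\alpha_s(X))=\alpha_s\otimes\alpha_s(\delta(X))$ and $\Delta_s^1(\overline\alpha_s(X))=\Delta_s(\overline\alpha_s(X))$, one gets for $t\in[0,T]$
$$\int_0^t\overline\alpha_s\otimes\overline\alpha_s(\delta(X))\#d\overline S_s+\int_{T-t}^{T}\alpha_s\otimes\alpha_s(\delta(X))\#dS_s=\int_0^t\Delta_s(\overline\alpha_s(X))\,ds.$$
On the other hand (A.4) between $T-t$ and $T$, after the change of variable $s\mapsto T-s$ in the drift integral, reads
$$\overline\alpha_t(X)=\overline\alpha_0(X)-\int_0^t\overline\alpha_s(\Delta_{Q,T-s}(X))\,ds-\int_{T-t}^{T}\alpha_s\otimes\alpha_s(\delta(X))\#dS_s.$$
Substituting the forward stochastic integral from the first line into the second gives exactly the asserted equation
$$\overline\alpha_t(X)=\overline\alpha_0(X)-\int_0^t\big[\overline\alpha_s(\Delta_{Q,T-s}(X))+\Delta_s\overline\alpha_s(X)\big]\,ds+\int_0^t\overline\alpha_s\otimes\overline\alpha_s(\delta(X))\#d\overline S_s,$$
once one notes that, under (B), $\Delta_{Q,s}$ depends on $s$ only through the law $\tau_s$, so that $\overline\alpha_s(\Delta_{Q,T-s}(X))$ is what one denotes $\overline\alpha_s(\Delta_{Q,s}(X))$ along the reversed process (whose law at time $s$ is $\tau_{T-s}$).

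To check that $\overline\alpha$ fits assumption (A), I would expand the drift with Lemma~\ref{VoiculescuFormula}. Pushing $\alpha_{T-s}^{-1}=\overline\alpha_s^{-1}$ through, the second--order contraction in $-\Delta_s\overline\alpha_s(X)$ equals $\overline\alpha_s$ applied to $+\sum_i m\circ 1\otimes(\eta\circ E_B\circ\overline\alpha_s)\otimes 1\big((\delta_i\otimes1+1\otimes\delta_i)\delta_i(X)\big)$, while the second--order part of $-\overline\alpha_s(\Delta_{Q,T-s}(X))$ is the same expression with coefficient $-\tfrac12$; together they produce the coefficient $+\tfrac12$ required in (A.3) with $\alpha$ replaced by $\overline\alpha$. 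The first--order part is $-\overline\alpha_s(\tilde{\delta}_{T-s}(X))-\overline\alpha_s\otimes\overline\alpha_s(\delta(X))\#\overline\xi_s=\sum_j\overline\alpha_s\otimes\overline\alpha_s(\delta_j(X))\#\overline\alpha_s(\overline Q_{j,s})$ with $\overline\alpha_s(\overline Q_{j,s}):=-\overline\xi_{j,s}-\overline\alpha_s(Q_j)$, i.e. the evaluation along $\overline\alpha$ of the derivation $\sum_j\delta_j(\cdot)\#\overline Q_{j,s}$. Thus the drift has the shape prescribed in (A.3) with $\overline Q$ in place of $Q$, and by (C.2), $\|\overline\alpha_s(\overline Q_{j,s})\|_q\leq C(T-s)^{-1/2}+\|Q_j\|$ is integrable on $[0,T]$ for every $q\in[1,\infty)$, so $s\mapsto\overline\alpha_s(\overline Q_{j,s})\in L^1([0,T],L^q(M))$. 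Combining this with the reversed SDE just derived, the preceding proposition (that $\overline S$ is a $B$-free Brownian motion of covariance $\eta$ adapted to $\overline{\mathcal F}$) and $\overline{\mathcal F}_s=W^*(B,\overline\alpha_u(A),\overline S_u,u\leq s)$, assumption (A) holds for $\overline\alpha$, which completes the proof.

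The step I expect to be the real obstacle is this last identification: matching the second--order $\eta\circ E_B\circ\alpha$--contraction of the forward generator against the one coming from Voiculescu's formula for $\Delta_s$, getting the signs and the factor $\tfrac12$ to combine correctly, and making the abstract algebra $A$ and its concrete representations $\alpha_{T-s}(A)=\overline\alpha_s(A)$ compatible so that the middle--leg map $\eta\circ E_B\circ\alpha$ is correctly read off as $\eta\circ E_B\circ\overline\alpha$. Everything else is an assembly of results already proved (Lemmas~\ref{VoiculescuFormula}, \ref{Unbounded}, \ref{RevStoInt} and the preceding proposition) together with routine integrability estimates from (C.1)--(C.2).
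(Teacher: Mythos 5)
Your proposal follows the paper's own proof essentially step by step: the domain membership $\overline{\alpha}_s(X)\in D(\Delta_s)$ and the $L^1$‑integrable bound via (C.2) coming from Lemma~\ref{VoiculescuFormula}, the deduction $\overline{\alpha}(X)\in D(\Delta^1)$ from Lemma~\ref{Unbounded}(7), the use of Lemma~\ref{RevStoInt}(3) to convert the forward stochastic integral into a backward one plus the generator term, substitution into (A.4), and the final identification of $\overline Q$ through Voiculescu's formula. This is correct and essentially identical to the paper's argument, with your extra display of the drift cancellation serving as a useful amplification of what the paper dispatches in one line.
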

\begin{proof}
 We first check that $\overline{\alpha}_s(X)\in D(\Delta_s)$ for $X\in A,$ using lemma \ref{VoiculescuFormula} and moreover using assumption (C.2):
 
 \begin{align*}&||\Delta_s(\overline{\alpha}_s(X))||\leq \sum_i||(\overline{\alpha}_s\o \overline{\alpha}_s)\delta_i((X))||_{W^*(\overline{\alpha}_s(A))\hat{\o}W^*(\overline{\alpha}_s(A))}\frac{C}{\sqrt{T-s}}\\&+\sum_i||(\overline{\alpha}_s\o \overline{\alpha}_s\o \overline{\alpha}_s)\delta_i\o 1\delta_i((X))||_{W^*(\overline{\alpha}_s(A))\hat{\o}W^*(\overline{\alpha}_s(A))\hat{\o}W^*(\overline{\alpha}_s(A))}\\&+\sum_i||(\overline{\alpha}_s\o \overline{\alpha}_s\o \overline{\alpha}_s)1\o\delta_i\delta_i((X))||_{W^*(\overline{\alpha}_s(A))\hat{\o}W^*(\overline{\alpha}_s(A))\hat{\o}W^*(\overline{\alpha}_s(A))}.\end{align*}
From this and lemma \ref{Unbounded} (7), $\overline{\alpha}(X)\in D(\Delta^1)$ follows immediately. The reader should note that we don't have in general $\overline{\alpha}(X)\in D(\Delta)$ with the bound assumed in (C.2).
   
Using proposition \ref{RevStoInt} (3) one gets (with the integral converging a priori only in $L^1$)  for any $X\in A$~: $$\int_u^v \overline{\delta}_s(\overline{\alpha}_{s}(X))\#d\overline{S}_s =-\int_{T-v}^{T-u}\overline{\delta}_{T-s}(\alpha_{s}(X))\#d{S}_s+\int_u^v\Delta_s(\alpha_{T-s}(X))ds.$$

Thus, replacing this relation in the original SDE in (A.4), we got the expected SDE for $\overline{\alpha}_t(X):=\alpha_{T-t}(X).$
Note  that since by definition $\overline{\alpha}_s$ extends to a von Neumann algebra isomorphism to $L^\infty(A,\tau\circ\overline{\alpha}_s)$, $\overline{Q}$ is uniquely defined by the statement in the proposition. The fact $q$ can be taken arbitrary comes from assumption (C.2) used in our last inequality.

Now the statement about assumption (A) for the reversed process is obvious once noted using Voiculescu's formula in lemma \ref{VoiculescuFormula} again : 
$$\overline{\alpha}_s(\Delta_{\overline{Q},s})(X)=-[\overline{\alpha}_s(\Delta_{Q,s}(X))+\Delta_s\overline{\alpha}_s(X)]$$


\end{proof}

\section{Applications to free Brownian motion and liberation process} 
 
 \subsection{Verifications of Hypothesis in examples}\label{ApplicationSec}

We give interesting basic examples known to satisfy Assumption (C)

\begin{proposition}\label{BrownianC}
Let $X_t=X+S_t$ for $S_t$ a $B$-free Brownian motion of covariance $\eta$, free with amalgamation over $B,$ assumed to have separable predual, with $X=(X_1...,X_n)$ algebraically independent over $B$. Then $X_t$ considered as in example \ref{BrownianDiffusion} satisfy assumption (C).
\end{proposition}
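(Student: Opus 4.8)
\noindent The plan is to check, one at a time, the seven items of Assumption (C) for the free Brownian motion $X_t=X+S_t$ of Example \ref{BrownianDiffusion}, where $\delta=(\partial_1,\dots,\partial_n)$ is the free difference quotient and $Q=0$. Most of the work is organizing the facts already recalled in Section \ref{Entropy} and in the references cited there; only (C.5) needs some bookkeeping and (C.6) an elementary finite-dimensional ODE argument.

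First I would dispose of the structural items. Since $X_{i,t}-X_{i,t'}=S_{i,t}-S_{i,t'}$, one has $W^*(\alpha_{s'}(A),S_{s'}-S_t,s'\in[t,\tau])=W^*(\alpha_t(A),S_{s'}-S_t,s'\in[t,\tau])$, so the solution is a strong solution; hence assumption (B) holds, and Proposition \ref{StrongSolution} gives free markovianity and continuity of the filtration $\mathcal{F}_s$, which is (C.4). For the countable $\mathbb{Q}$-subalgebra in (C.0) I would take $\mathscr{A}=\mathscr{B}\langle X_1,\dots,X_n\rangle$ for a $\|\cdot\|_2$-dense countable $\mathbb{Q}$-$*$-subalgebra $\mathscr{B}\subset B$ (using separability of $M_0$): then $\alpha_{T-s}(\mathscr{A})=\mathscr{B}\langle X_{1,T-s},\dots,X_{n,T-s}\rangle$ is a core of $\delta_s$ for every $s$, by the polynomial-core statement (last assertion of Lemma \ref{VoiculescuFormula}, or \cite{Dab08}) together with $\|\cdot\|_2$-approximation of the $B$-coefficients keeping them bounded.

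For the conjugate-variable items, write $\overline{\xi}_{i,s}=\xi_i(X_{T-s};B,\eta)$. By the formula recalled in Section \ref{Entropy} (Proposition 3.12 of \cite{ShlyFreeAmalg00}), $\xi_i(X_t)=E_{W^*(B,X_t)}[S_t^{(i)}]/t$; since the conditional expectation is an operator-norm contraction and $\|S_t^{(i)}\|\le 2\sqrt{t}\,\|\eta(1)\|^{1/2}$, this gives $\|\overline{\xi}_{i,s}\|\le 2\|\eta(1)\|^{1/2}/\sqrt{T-s}$, which is (C.2). The relation $\xi_i(X_u)=E_{W^*(B,X_u)}[\xi_i(X_v)]$ for $v\le u$ (Proposition 3.11 of \cite{ShlyFreeAmalg00}), read with $u=T-t$ and $v=T-s$, reads $\overline{\xi}_{i,t}=E_{W^*(\alpha_{T-t}(A))}[\overline{\xi}_{i,s}]$, so the quantity in (C.3) vanishes and (C.3) holds with $D=0$; this is the martingale property of $\overline{\xi}_{i,\cdot}$ in the reversed filtration. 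The same relation shows $t\mapsto\|\xi_i(X_t)\|_2^2$ is nonincreasing and $\langle\xi_i(X_{t'}),\xi_i(X_t)\rangle=\|\xi_i(X_t)\|_2^2$ for $t'\le t$; combined with the right-continuity of $\Phi^*(X_t)$ this forces right-continuity in $L^2$ of $t\mapsto\xi_i(X_t)$, while the Cauchy estimate $\|\xi_i(X_{t'})-\xi_i(X_t)\|_2^2=\|\xi_i(X_{t'})\|_2^2-\|\xi_i(X_t)\|_2^2$ for $t'\le t$ gives left limits, the discontinuity points being the (countably many) jumps of the monotone functions $t\mapsto\|\xi_i(X_t)\|_2^2$; composing with $s\mapsto T-s$ gives (C.1).

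The two remaining, more technical items are (C.5) and (C.6). For (C.5) I would use that, since the increments $S_{T-u}-S_{T-s}$ ($u<s$) are free over $B$ from $\mathcal{F}_{T-s}\supseteq W^*(\alpha_{T-s}(A))$, the algebra $\overline{\mathcal{F}}_{s,alg}$ is algebraically the free product over $B$ of $\alpha_{T-s}(A)$ with the increment algebra; hence the free difference quotient in the variables $\alpha_{T-s}(X_i)$ extends to the derivation $\hat{\delta}_s$ killing $B$ and these increments, its closability follows from the existence of $\overline{\xi}_{i,s}$ (i.e.\ from (C.2)), the conjugate variable of $\hat{\delta}_s$ is again $\overline{\xi}_{i,s}$ (enlarging the scalars by a free subalgebra does not change conjugate variables), and so $\hat{\Delta}_s$ extends $\Delta_s$ by Lemma \ref{VoiculescuFormula}. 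Writing $\alpha_{T-s}(X_i)=\alpha_{T-t}(X_i)+(S_{T-s}-S_{T-t})$ and using that $\hat{\delta}_t$ kills all the relevant increments, a check on generators gives $\hat{\delta}_t|_{\overline{\mathcal{F}}_{s,alg}}=\hat{\delta}_s$ for $s\le t$, so $D(\hat{\delta}_s)\subseteq D(\hat{\delta}_t)$ with domination constant $C=1$; the adaptedness property is a similar computation using free markovianity. For (C.6), with $Q=0$ the operator $\Delta_{Q,t}=\Delta_Q^{\tau_t}$ sends the finitely generated free $B$-bimodule $A_{\le d}$ of polynomials of degree $\le d$ into $A_{\le d-2}$ and depends on $t$ only through the $B$-valued moments of $\tau_t$ (the law of $X+S_t$), which are $B$-valued polynomials in $t$ by free convolution with a semicircular; so $t\mapsto\Delta_{Q,t}|_{A_{\le d}}$ is a continuous (indeed polynomial) family of $B$-bimodule endomorphisms of a fixed finite-rank free $B$-bimodule, and the linear ODEs $\partial_t K_t^s=-\Delta_{Q,t}(K_t^s)$ and $\partial_t L_t^s=\Delta_{Q,T-t}(L_t^s)$ with $K_s^s=L_s^s=P$ have unique solutions, smooth in $t$ and valued in polynomials $\subset C^2(A_1,\dots,A_n:B)$, with finite $\sup_{t\in[0,s]}(\|\cdot\|_{C^2}+\|\partial_t\cdot\|_{C^2})$ by continuity; one may moreover identify $K_t^s(P)$ with the free heat semigroup through $\alpha_t(K_t^s(P))=E_{\mathcal{F}_t}[\alpha_s(P)]$, whose drift vanishes by Proposition \ref{TimeIto} and injectivity of $\alpha_t$ on $A$, as a consistency check. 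I expect the main obstacle to be (C.5): setting up the extended derivation $\hat{\delta}_s$, its closability, the adaptedness property, and the restriction identity $\hat{\delta}_t|_{\overline{\mathcal{F}}_{s,alg}}=\hat{\delta}_s$ cleanly; (C.6), despite looking like a PDE, collapses to finite-dimensional linear ODE theory once the degree filtration and the polynomial time-dependence of the moments of $\tau_t$ are noticed.
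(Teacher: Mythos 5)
Your check-by-item plan matches the paper's proof in structure and in every essential idea: strong-solution plus Proposition \ref{StrongSolution} for (B) and (C.4), the Shlyakhtenko formulas $\xi_i(X_t)=E_{W^*(B,X_t)}[S_t^{(i)}]/t$ and $\xi_i(X_u)=E[\xi_i(X_v)]$ for (C.2) and (C.3) with $D=0$, the reversed-martingale/monotone-norm argument for (C.1), freeness with amalgamation plus \cite[Proposition 3.8]{ShlyFreeAmalg00} and Lemma \ref{VoiculescuFormula} for (C.5) with $C=1$, and the strict degree-decrease of $\Delta_{Q,t}$ when $Q=0$ to build $K_t^s,L_t^s$ for (C.6). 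The only cosmetic divergences are that you argue the $L^2$ c\`adl\`ag regularity of $\overline{\xi}$ directly from the Pythagorean identity rather than citing \cite[Lemma 4.8, Corollary 4.9]{ShlyFreeAmalg00}, and you cast the (C.6) construction as a (non-autonomous) linear ODE while the paper writes the explicitly terminating iterated-integral formula for $K_t^{s,p}$; these are the same underlying observations.
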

\begin{proof}
The check of assumption (B) is obvious in the setting of example \ref{BrownianDiffusion} with $A_i=B\langle X_i\rangle$, $p=n$ the free Brownian motions those of the proposition, the homomorphisms $\alpha_t$ satisfy $\alpha_t(X)=X_t$, $\delta$ in assumption (A).(1) is the free difference quotient, $\tilde{\delta}_s=0$ in (A).(3) so that (B).(1) is obvious and (A).(4) is given by Ito formula (proposition \ref{ItoSimple}) for the trivial process $X_t$. By definition, we have the strong solution assumption in (B).(2). Since $B$ has separable predual, one can take $\mathscr{B}$ a weak-* dense countable subalgebra so that (0) is now obvious with $\mathscr{A}=\mathscr{B}\langle X_1,...,X_n\rangle.$ (C).(4) is thus also proved in proposition \ref{StrongSolution}.

From the results recalled in subsection \ref{Entropy}, (C).(3) is true with $D=0$, $\overline{\xi}_{i,s}=\xi_i(X_{T-s};B,\eta)=E_{L^2(B\langle X_{T-s}\rangle)}[\frac{S_{T-s}^{(i)}}{T-s}].$ so that since $||S_{t}^{(i)}||\leq 2\sqrt{||\eta(1)||t }$, (C).(2) is valid with $C=2\sqrt{||\eta(1)||}$.

The left continuity in (C).(1) is known from \cite[lemma 4.8 and corollary 4.9]{ShlyFreeAmalg00}. Since we also have $\overline{\xi}_{i,s}=E_{\overline{\mathcal{F}}_s}(\overline{\xi}_{i,t})$ $t\geq s$, the left limit at $s$ is only $E_{\cap_{s<u<t}\overline{\mathcal{F}}_u}(\overline{\xi}_{i,t})$ and is known to exists in $L^2$ for elementary orthogonality reasons. Since $||\overline{\xi}_{i,s}||_2$ is increasing, it has at most countably many points of discontinuity, and each point of discontinuity of $\overline{\xi}_{i,s}$ in $L^2$ should generate a discontinuity of the norm (since the left limit is a projection of the right limit), proving the last statement in (C).(1).

(C).(5) is a consequence of \cite[Proposition 3.8]{ShlyFreeAmalg00} and freeness with amalgamation which implies that the densely (from the strong solution property) defined derivation $\hat{\delta_i}$ of (C).(5) satisfy $\hat{\delta_{si}}^*1\o 1=\delta_{si}*1\o 1.$ The closability then follows from lemma \ref{VoiculescuFormula} and from which we also see $\hat\Delta_s(\alpha_{T-s}(A))\subset L^2(\alpha_{T-s}(A))$ and the extension property for $\hat\Delta_s$ follows.
From the definition here we see $\hat{\delta}_s=\hat{\delta}_t$ on the smallest domain, explaining the end of (C).(5) with $C=1.$
Since for $Q=0$, $\Delta_{Q,t},\Delta_{Q,T-t}$ decrease the degree of the polynomial $P\in A$ strictly, building the solutions in (C).(6) is elementary, for $P$ of degree $n$, one defines by induction $K_t^{s,0}(P)=P$ and for $p\geq 1:$ $$K_t^{s,p}(P)=P+\int_t^s\Delta_{Q,u}(K_u^{s,p-1}(P))du.=P+\sum_{k=1}^p
\int_t^sdu_1\int_{u_1}^sdu_2...\int_{u_{k-1}}^sdu_k \Delta_{Q,u_1}...\Delta_{Q,u_k}(P).$$

and then $K_t^s(P)=K_t^{s,n}(P)=K_t^{s,n+k}(P), k\geq 0.$

From the case $P$ monomial, $(u_1,...,u_k)\mapsto \Delta_{Q,u_1}...\Delta_{Q,u_k}(P)$ is valued in $A$ and continuous with value in $C^2(A_1,...,A_n :B).$ Approximating it by piecewise polynomial functions in $u$, one easily checks $K_t^{s,p}(P)$ belongs to the stated completion $C^1([0,s],C^2(A_1,...,A_n :B))$ and from the inductive relation above, it satisfies 
$$\frac{\partial K_t^s(P)}{\partial t}+\Delta_{Q,t}(K_t^s(P))=0.$$
The construction of $ L_t^s(P)$ is similar.

\end{proof}
More general examples from example \ref{BrownianDiffusion} with $Q_i\neq 0$ polynomial, will be treated elsewhere. We consider here the liberation process.

\begin{proposition}\label{liberationC}
Let $\alpha_t$ the liberation process as in example \ref{liberation} with $Q=0$, $B=\C$ for $S_t$ a free Brownian motion (of covariance $\eta=\tau$), with each $B_i$ having separable predual. Then $\alpha_t$ satisfy assumption (C).
\end{proposition}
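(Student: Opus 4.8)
The plan is to verify Assumption (C) for the liberation process $\alpha_t(a_j) = U_t^{(j)} \alpha_0(a_j) U_t^{(j)*}$ ($a_j \in B_j$) point by point, essentially by translating the known results of Voiculescu \cite{Vo6} on liberation gradients and liberation Fisher information into the language of Assumption (C), exactly as Proposition \ref{BrownianC} did for free Brownian motion using \cite{ShlyFreeAmalg00}. First I would recall from Example \ref{liberation} that with $Q = 0$ the equation of (A.4) has the explicit strong solution given by free unitary Brownian motions, so Assumption (B.2) holds in the strong-solution form (and \emph{a fortiori}, since $B = \C$, in the free-markovianity form, via Proposition \ref{StrongSolution}), and the filtration is continuous, giving (C.4) as well as the bracket of (C.5). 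Since each $B_i$ has separable predual, choosing weak-$*$ dense countable $\Q$-subalgebras $\mathscr{B}_i \subset B_i$ and setting $\mathscr{A} = Alg(\mathscr{B}_1,\ldots,\mathscr{B}_n)$ (with the unitaries kept exact) gives (C.0): the key point is that conjugation by the fixed unitaries $U_{T-s}^{(j)}$ maps a core of the relevant derivation to a core of $\delta_s$, which follows from \cite[Proposition 5.1 / 5.14]{Vo6} (the transformation rule for liberation gradients under conjugation) exactly as in the computation \eqref{projLiberation}.

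Next I would identify $\overline{\xi}_{i,s}$ with the liberation gradient $j(U_{T-s}^{(i)} B_i U_{T-s}^{(i)*} : \cdots ; B)$ computed in $W^*(\alpha_{T-s}(A))$; its existence for all $s > 0$ (equivalently $T - s < T$) is Proposition (i) in Section \ref{Entropy}, and its left-continuity with right limits and countably many discontinuities is Proposition (ii) there, giving (C.1) — though I should note a subtlety: that proposition asserts right-continuity with left limits in $L^2$, so I need the time-reversal $s \mapsto T - s$ to convert it to the left-continuity with right limits demanded by (C.1), and check the ``continuous except on countably many points'' clause survives (it does, since the jump set is countable either way). For (C.2), the bound $\|\overline{\xi}_{i,s}\| < C/\sqrt{T-s}$, I would use the analogue of the Fisher information decay estimate: from \eqref{projLiberation} one has $\overline{\xi}_{i,s} = E_s[U_{T-s}^{(i)} U_{T-v}^{(i)*} \overline{\xi}_{i,v}\, U_{T-v}^{(i)} U_{T-s}^{(i)*}]$ for $v > s$, and combined with the a priori bound on the liberation Fisher information near the starting time (analogous to \cite[Proposition 4.9]{ShlyFreeAmalg00}, here coming from \cite{Vo6}, e.g. via the comparison with conjugate variables of unitary Brownian motion, giving $\varphi^* = O(1/(T-s))$) one extracts the pointwise operator-norm bound. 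For (C.3), I would expect to take $D = 0$ or a small exponent $\alpha$: the projection identity $E_{W^*(\overline{X}_{1,t},\ldots,\overline{X}_{i,t})}(\overline{\xi}_{s,i})$ versus $\overline{\xi}_{t,i}$ differs only through conjugation by $U_{T-s}^{(i)} U_{T-t}^{(i)*}$, which is within $O(\sqrt{s-t})$ of the identity in operator norm (since free unitary Brownian motion has $\|U_{T-s}^{(i)} - U_{T-t}^{(i)}\| = O(\sqrt{s-t})$), so (C.3) holds with $\alpha = 1/2$ after multiplying by the $1/\sqrt{T-s}$ factor from (C.2).

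For (C.5), I would define $\hat{\delta}_s$ on $\overline{\mathcal{F}}_{s,alg}$ as the derivation vanishing on the increments $S_{T-u} - S_{T-s}$ and acting as the free difference quotient relative to the liberated algebras on the $\alpha_{T-u}(A)$ part; closability and the identity $\hat{\delta}_s^* 1 \otimes 1 = \overline{\xi}_s$ follow from \cite[Proposition 5.14]{Vo6} (the free-markovianity / conditional-expectation version of the liberation gradient formula) together with the freeness with amalgamation of the Brownian increments, and $\hat{\Delta}_s$ extending $\Delta_s$ follows from Voiculescu's formula in Lemma \ref{VoiculescuFormula}; the uniform comparison $\|\hat{\delta}_t(U)\| \leq C\|\hat{\delta}_s(U)\|$ for $s \leq t$ reduces, via the explicit unitary conjugation relating $\delta_s$ and $\delta_t$ and its boundedness (the conjugating unitaries have norm $1$), to $C = 1$ as in Proposition \ref{BrownianC}. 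The main obstacle will be (C.6): constructing the PDE solutions $K_t^s(P), L_t^s(P)$ in $C^1([0,s], C^2(A_1,\ldots,A_n:B))$ with $\partial_t K_t^s + \Delta_{Q,t} K_t^s = 0$. Here, unlike the Brownian case, $Q = 0$ does \emph{not} make $\Delta_{Q,t}$ degree-lowering — the liberation generator $\Delta_{Q,t}(a_j) = -a_j + \eta(E_B(\alpha_s(a_j)))$ plus the second-order term preserves degree — so the naive finite induction fails. Instead I would build $K_t^s(P)$ by the convergent Picard/Neumann series $K_t^s(P) = \sum_{k \geq 0} \int_{t \leq u_1 \leq \cdots \leq u_k \leq s} \Delta_{Q,u_1} \cdots \Delta_{Q,u_k}(P)\, du_1 \cdots du_k$, using that $\Delta_{Q,t}$ is a \emph{bounded} operator on $C^2(A_1,\ldots,A_n:B)$ (indeed on $\mathcal{A}$, since the liberation derivation and its second-order analogues are bounded in the relevant projective tensor norms — the coefficients $a_j \otimes 1 - 1 \otimes a_j$ are bounded) with norm controlled uniformly in $t$ because $\Delta_{Q,t}$ depends on $t$ only through the trace $\tau_t$; the series then converges absolutely in the Banach space $C^1([s_1,s_2], C^2(A_1,\ldots,A_n:B))$ and solves the ODE by termwise differentiation, and $L_t^s(P)$ is handled symmetrically with $\Delta_{Q,T-t}$. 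I would close by remarking that all of this goes through verbatim once one observes that $C^2(A_1,\ldots,A_n:B)$ is a genuine Banach algebra on which $\Delta_{Q,t}$ acts boundedly.
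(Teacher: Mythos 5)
Your verifications of (C.0)--(C.5) track the paper's proof closely and are essentially correct: (B.2), (C.0), (C.4) from the explicit unitary Brownian motion strong solution; (C.1) from Voiculescu's continuity properties of the liberation gradient; (C.2) from \cite[Remark 8.10, Corollary 8.3]{Vo6}; (C.3) with $\alpha=1/2$ via the $O(\sqrt{s-t})$ operator-norm estimate on $U_{T-s}^{(i)}U_{T-t}^{(i)*}-1$ (though the paper actually takes $D=(\sqrt{T}+4\sqrt{2})C$, not $D=0$ as you first guess); and (C.5) from the explicit conjugation rule relating $\hat{\delta}_t$ and $\hat{\delta}_s$.

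However, the argument for (C.6) has a genuine gap. You claim $\Delta_{Q,t}$ is bounded on $C^2(A_1,\ldots,A_n:B)$ (and even on $\mathcal{A}$), on the grounds that the coefficient $a_j\otimes 1 - 1\otimes a_j$ is bounded. This conflates boundedness of $\delta_i$ on a single generator with boundedness of the derivation. On a monomial of degree $n$ the Leibniz rule produces $n$ terms, so $\|\delta_i(X)\|_{\mathcal{A}\hat{\otimes}\mathcal{A}}$ grows linearly in the degree, and the second-order term in $\Delta_{Q,t}$ grows quadratically; and the $-N$ linear part alone has operator norm $\geq n$ on degree-$n$ elements. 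So $\Delta_{Q,t}$ is unbounded on $C^2$, and your Picard/Neumann series $\sum_k\int\Delta_{Q,u_1}\cdots\Delta_{Q,u_k}(P)\,du$ does not converge. (There is also a secondary issue: $\Delta_{Q,t}$ a priori maps $C^2$ to $\mathcal{A}$, not back into $C^2$, so even formally the iteration does not stay in the space where you need it.)

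The paper's correction is precisely what you dismiss as a failed ``naive finite induction.'' Writing $\Delta_{Q,t} = -N + L_t$ with $N$ the degree (number) operator, one checks that $L_t = \Delta_{Q,t}+N$ is strictly degree-\emph{lowering}, hence nilpotent on the finite-degree filtration containing a fixed starting polynomial $P$. The Duhamel (interaction-picture) formula
$K_t^{s,p}(P)=e^{-N(s-t)}P+\int_t^s e^{-N(u-t)}L_u(K_u^{s,p-1}(P))\,du$
then terminates after $\deg(P)$ steps, with $K_t^s(P)=K_t^{s,\deg P}(P)$, and the $C^1$ regularity in $C^2(A_1,\ldots,A_n:B)$ follows as in the Brownian case. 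So the finite induction does go through once the number operator is factored out; it is the Picard series on the full (unbounded) generator that fails.
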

\begin{proof}
The check of assumption (B) is obvious in the setting of example \ref{liberation} with $A_i=B_i$, $p=n$ the free Brownian motions those of the proposition,  $\tilde{\delta}_s=0$ in (A).(3) so that (B).(1) is obvious and (A).(4) is given by Ito formula (proposition \ref{ItoSimple}) for the  process $\alpha_t(X),X\in A_i$. By definition, we have the strong solution assumption in (B.2). Since $B,B_i$ have separable predual, one can take ,$\mathscr{B}_i$ a weak-* dense countable $\Q$-subalgebra so that (0) is now obvious with $\mathscr{A}=Alg(\mathscr{B}_1,...,\mathscr{B}_n).$ (C.4) is thus also proved in proposition \ref{StrongSolution}.

From the results recalled in subsection \ref{Entropy} equation \eqref{projLiberation}, (C.3) is true with $D=(\sqrt{T}+4\sqrt{2})C,\alpha=1/2$ with C the constant in (C.2). Indeed $\overline{\xi}_{i,s}=\sqrt{-1}j(U_{T-s}^{(i)}A_iU_{T-s}^{(i)*}:U_{T-s}^{(1)}A_1U_{T-s}^{(1)*}\vee ...  \widehat{U_{T-s}^{(i)}A_iU_{T-s}^{(i)*}}...\vee U_{T-s}^{(n)}A_nU_{T-s}^{(n)*}; B),$ 
and for $t<s$ $$||\overline{\xi}_{i,t}-E_t(\overline{\xi}_{i,s})||_2\leq 2 ||U_{T-t}U_{T-s}^*-1|| ||\overline{\xi}_{i,s}||_2,$$
and we can bound by free Burkholder-Gundy inequality \cite[Th 3.2.1]{BS98}  $U_{T-t}U_{T-s}^*-1=\frac{1}{2}\int_{T-s}^{T-t}U_{T-u}U_{T-s}^*du+i\int_{T-s}^{T-t}dS_uU_{T-u}U_{T-s}^*$ and (C.2) which is a consequence of \cite[Remark 8.10 and corollary 8.3]{Vo6}.

(C.1) is explained in proposition \ref{projLiberation}.(2) and (C.5) is a consequence of the proof there with $C\geq 1.$ Especially for any $s<t$ \begin{equation}\label{hatdeltaLiberation}\forall P\in D(\hat{\delta}_s), \ \ \ \ \  \hat{\delta}_{t,i}(P)=\hat{\delta}_{s,i}(P)\#(U_{T-s}^{(i)}U_{T-t}^{(i)*}\o U_{T-t}^{(i)}U_{T-s}^{(i)*}\end{equation}

Since for $Q=0$, $\Delta_{Q,t},\Delta_{Q,T-t}$ don't increase the degree of the polynomial $P\in A$, building the solutions in (C.6) is elementary. More precisely, for $P$ of degree $n$ (in the grading of the algebra with degree 1 for $B_i$, 0 for $B$), $\Delta_{Q,t}(P)+nP$ is of degree strictly smaller than $deg(P)$ as an elementary computation shows. Let $N$ the multiplication operator defined on homogeneous terms by $NP=deg(P)P$, one defines by induction $K_t^{s,0}(P)=e^{-n(s-t)}P$ and for $p\geq 1:$ \begin{align*} &K_t^{s,p}(P)=e^{-N(s-t)}P+\int_t^se^{-N(u-t)}(\Delta_{Q,u}+N)(K_u^{s,p-1}(P))du.\\&=e^{-N(s-t)}P+\sum_{k=1}^p\int_t^sdu_1\int_{u_1}^sdu_2...\int_{u_{k-1}}^sdu_k e^{-N(u_1-t)}\Delta_{Q,u_1}e^{-N(u_2-u_1)}...\Delta_{Q,u_k}(e^{-N(s-u_n)}P),\end{align*}

and then $K_t^s(P)=K_t^{s,n}(P)=K_t^{s,n+k}(P), k\geq 0.$

The conclusion and construction of $ L_t^s(P)$ is similar to our previous proposition.

\end{proof}
 
\subsection{Alternative formulas for the reversed process and regularity of some martingales}

We got a reversed SDEs, we now want to get alternative formulas and obtain applications to regularity of conjugate variables along free Brownian motions and of liberation gradient along liberation process.
\begin{proposition}\label{AltForm}Assume assumption (C).
\begin{enumerate}
\item For any $X\in A$ 
let us write $R_X(u,t,\overline{X}_u)=\alpha_{T-u}(L_u^t(X)),$ then :
$$\overline{X}_{t}=\int_u^t\delta_{v}(R_X(v,t,\overline{X}_v))\#d\overline{S}_{v}+R_X(u,t,\overline{X}_u)-\int_u^tdv\Delta_v(R_X(v,t,\overline{X}_v)).$$

\item 
Let us write $Q_X(u,t)=E_u(\overline{X}_{t}),$ then 
$$Q_X(u,t)=R_X(u,t,\overline{X}_u)-\int_u^tdvE_u[\Delta_v(R_X(v,t,\overline{X}_v))]).$$
For any $U\in D(\delta^*_{[0,T]})$, $a,b\in A$, we have for $u\leq t\leq T$:
 
\begin{align*} \tau&([\overline{X}_t-Q_X(u,t)]\overline{a}_u\int_u^TU_s\#d\overline{S}_{s}\overline{b}_u)
= \int_u^tdv\tau(Q_X(v,t)\hat{\delta}_v^*(\overline{a}_uU_v\overline{b}_u)).\end{align*}

As a consequence, for any $t\in [u,T]$, $v\mapsto 1_{[u,t)}(v) Q_X(v,t)$ and for any $v\in[u, t]$, $s\mapsto 1_{[u,v)}(s) (E_s\Delta_v(R_X(v,t,\overline{X}_v))$ are in $D(\mathcal{E}),$
and
for any $Z\in D(\mathcal{E})$, we have :
 \begin{align*} \tau&((\overline{X}_t-Q_X(u,t)-\int_u^t\overline{\delta}_s(Q_X(s,t))\#d\overline{S}_{s})\overline{a}_u\int_u^T\overline{\delta}_s(Z_s)\#d\overline{S}_{s}\overline{b}_u)=0.\\ \tau&([(1-E_u)(\Delta_v(R_X(v,t,\overline{X}_v)))-\int_u^v\overline{\delta}_s(E_s\Delta_v(R_X(v,t,\overline{X}_v))\#d\overline{S}_{s})]\overline{a}_u\int_u^T\overline{\delta}_s(Z_s)\#d\overline{S}_{s})\overline{b}_u)=0.\end{align*}

More generally, the result $v\mapsto 1_{[u,t)}(v) Q_X(v,t)\in D(\mathcal{E})$ extend to $X\in L^2(A,\tau_{T-t}), \tau_t=\tau\circ\alpha_t$ and for any  $U\in \mathcal{B}_2^a(\overline{\mathcal{F}})$, 
 
  \begin{align*} \tau&((\overline{X}_t-Q_X(u,t)-\int_u^t\overline{\delta}_s(Q_X(s,t))\#d\overline{S}_{s})\int_u^TU_s\#d\overline{S}_{s})=0.\end{align*}
 
 \item For $X\in L^\infty(A,\tau_{T-v})$, $[u,v]\ni t\mapsto M_t^{[u,v]}(X)=(Q_X(t,v)-Q_X(u,v)-\int_u^t\overline{\delta}_s(Q_X(s,v))\#d\overline{S}_{s})$ and $t\mapsto \mathcal{N}_t^{[u,v]}(X)=\int_u^t\overline{\delta}_s(Q_X(s,v))\#d\overline{S}_{s})$ are martingales in $L^p$ for all $p\in  [1,\infty[$ with null covariation. Actually, $s\mapsto 1_{[u,v)}(s) \overline{\delta}_s(Q_X(s,v))\in \mathcal{B}_p^a(\overline{\mathcal{F}}).$ Moreover, $M_t^{[u,v]}(X)$ has null covariation with any  ${N}_t^{[u,v]}=\int_u^vU_s\#d\overline{S}_{s}$  for all $U\in \mathcal{B}_2^a(\overline{\mathcal{F}})$. Finally, the covariation converges to zero in any $L^r$ with $r<p$ if $U\in \mathcal{B}_p^a(\overline{\mathcal{F}}),p\geq 2.$
\end{enumerate}
\end{proposition}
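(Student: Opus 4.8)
The plan is to derive all three items from the backward SDE of Proposition \ref{RevSDE}, the time-dependent Ito formula (Proposition \ref{TimeIto}) applied in the reversed direction of time, and the martingale/covariation machinery of Lemma \ref{RevStoInt} together with the Dirichlet-form characterization $D(\mathcal{E})=D(\Delta^{1/2})=\{f\in L^2_{biad}:\exists C,\ |\langle f,\Delta v\rangle|\le C\,\mathcal{E}(v)^{1/2}\ \forall v\in D(\Delta)\}$. For item (1), I would note that $L^t_\cdot(X)$ solves the PDE $\partial_t L_t^t(P)-\Delta_{Q,T-t}L_t^t(P)=0$ from (C.6); translating to the reversed time variable this says precisely that $t\mapsto \overline{\alpha}_t(L_t^{\,\cdot}(X))$ has no ``drift correction'' coming from $\Delta_{Q}$, so applying the reversed Ito formula (the reversed analogue of Proposition \ref{TimeIto}, valid since $\overline{\alpha}$ satisfies assumption (A) by Proposition \ref{RevSDE}) to $P_v=L_v^t(X)$ leaves exactly $\int_u^t \delta_v(R_X(v,t,\overline{X}_v))\#d\overline{S}_v - \int_u^t \Delta_v(R_X(v,t,\overline{X}_v))\,dv$ plus the initial value $R_X(u,t,\overline{X}_u)$, with the terminal identity $R_X(t,t,\overline{X}_t)=\overline{X}_t$ since $L_t^t(X)=X$.

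For item (2), the formula for $Q_X(u,t)=E_u(\overline{X}_t)$ is obtained by applying $E_u=E_{W^*(B,\alpha_{T-u}(A))}$ to the identity in (1): the stochastic integral $\int_u^t\delta_v(R_X(v,t,\overline X_v))\#d\overline S_v$ is an $\overline{\mathcal F}$-martingale started at time $u$, hence has null $E_u$-expectation, $R_X(u,t,\overline X_u)\in W^*(\alpha_{T-u}(A))$ is fixed by $E_u$, and Fubini moves $E_u$ inside the time integral. For the trace identity, I would mimic the computation in the (commented-out) corollary after Proposition \ref{RevSDE}: write $\overline X_t-Q_X(u,t)=-\int_u^t(1-E_u)(\Delta_v(R_X(v,t,\overline X_v)))\,dv$ up to the martingale part, use that $\int_u^T U_s\#d\overline S_s$ is an $\overline{\mathcal F}$-martingale so only its restriction to $[u,t]$ contributes, then invoke Lemma \ref{RevStoInt}(3) to replace $\int_u^v\overline\delta_s(\cdot)\#d\overline S_s + \int_{T-v}^{T-u}\overline\delta_{T-s}(\cdot)\#dS_s$ by $\int_u^v\Delta_s(\cdot)\,ds$, using that the $S$-integral part lies in $\mathcal F_v$ and is orthogonal to the relevant reversed martingale increments. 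Collecting terms yields $\int_u^t dv\,\tau(Q_X(v,t)\hat\delta_v^*(\overline a_u U_v\overline b_u))$ via Voiculescu's formula (Lemma \ref{VoiculescuFormula}) and $E_v(\overline\xi_{\cdot})$-consistency. The $D(\mathcal E)$ membership of $v\mapsto 1_{[u,t)}(v)Q_X(v,t)$ then follows by bounding $|\int_u^t dv\,\tau(Q_X(v,t)\,\Delta_v(w_v))|$ for $w\in D(\Delta)$ by $\mathcal E(w)^{1/2}$ times a constant (using $\|\overline X_t-Q_X(u,t)\|_2$-type estimates and the above trace identity with $U=\overline\delta(w)$), hence the $D(\Delta^{1/2})$ criterion applies; the two vanishing trace identities are then the statement that the $D(\mathcal{E})$-element's ``$\delta_{[0,T]}$-image'' is the claimed stochastic integral, which is Lemma \ref{RevStoInt}(3) / Lemma \ref{Unbounded}(4) once membership is known. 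The extension to $X\in L^2(A,\tau_{T-t})$ is by density of $A$ and $L^2$-continuity of all the maps involved, and to $U\in\mathcal{B}_2^a(\overline{\mathcal F})$ by density of simple processes (and since the first factor is now in $L^2$, no $\overline a_u,\overline b_u$ are needed).

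For item (3), I would observe that $M^{[u,v]}_t(X)$ and $\mathcal{N}^{[u,v]}_t(X)$ are, respectively, the ``drift part with its compensator removed'' and the stochastic-integral part appearing in item (2), so each is an $\overline{\mathcal F}$-martingale; the $L^p$ claim for $\mathcal N$ follows from $s\mapsto 1_{[u,v)}(s)\overline\delta_s(Q_X(s,v))\in\mathcal B_p^a(\overline{\mathcal F})$, which I would get from Lemma \ref{C7}-type bounds ($\|\delta_s E_s(\cdot)\|\le C\|\hat\delta(\cdot)\|$, Lemma \ref{Unbounded}(1)) controlling $\|\overline\delta_s Q_X(s,v)\|$ by $\|\hat\delta_s \overline X_v\|$ pointwise in $s$, together with $\overline X_v\in D(\hat\delta_v)\subset D(\hat\delta_s)$ for $s\ge$ the relevant value by (C.5) and the uniform constant there — and since $X\in L^\infty(A,\tau_{T-v})$ gives boundedness, one gets $L^\infty$-in-$s$ control, hence $\mathcal B_p^a$ for every $p$; then Lemma \ref{Bpa} gives $\mathcal N_t\in L^p$. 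The null covariation of $M$ with $\mathcal N$ and with a general $N_t=\int_u^v U_s\#d\overline S_s$, $U\in\mathcal B_2^a$, is exactly the second (and, with $a_u=b_u=1$, first) vanishing trace identity of item (2), read as a covariation statement via Proposition \ref{ItoLp} — the point being that $M$ itself has no stochastic-integral component, so its bracket against any stochastic integral vanishes; the $L^r$-convergence of the approximating (Riemann-sum) covariations for $U\in\mathcal B_p^a$, $p\ge 2$, $r<p$, follows from Proposition \ref{ItoLp}'s quantitative bounds and Hölder, letting the mesh go to zero.

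The main obstacle I expect is item (2): making rigorous the manipulation $\overline X_t - Q_X(u,t) = -\int_u^t (1-E_u)\Delta_v(R_X(v,t,\overline X_v))\,dv + (\text{martingale})$ and then justifying the interchange of $\int U\#d\overline S$ against this drift with Lemma \ref{RevStoInt}(3), because $\Delta_v(R_X(v,t,\overline X_v))$ is only an $L^1$-valued (not $L^2$) object near $v=t$ (the conjugate variable blows up like $(T-v)^{-1/2}$ by (C.2), and here the relevant singularity is at $v\to t$ only through regularity of $R_X$, but near the original time $0$, i.e. $v\to T$, it is genuinely $L^1$), so one must work with the $L^1$-extension $\Delta^1$ and the $L^1$-version of Lemma \ref{RevStoInt}(3), carefully checking all pairings against $\int U\#d\overline S$ with $U$ bounded enough, and only then passing to the Dirichlet-form statement where the $(t-v)$ weight (or the $\overline a_u,\overline b_u$ truncation) restores $L^2$-integrability. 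Establishing the $D(\mathcal{E})$ membership cleanly — i.e. verifying the $D(\Delta^{1/2})$ inequality with the correct constant uniformly — is the technical heart.
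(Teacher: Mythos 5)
Your item (1) is correct and matches the paper: once Proposition~\ref{RevSDE} puts the reversed process inside assumption~(A), you apply Proposition~\ref{TimeIto} to $P_v=L_v^t(X)$ and the PDE for $L$ in (C.6) cancels exactly the $\Delta_Q$ part of the reversed drift, leaving only $-\Delta_v$. Your plan for item (2) also names the right ingredients (Lemma~\ref{RevStoInt}(3), Voiculescu's formula, the $D(\Delta^{1/2})$ characterization of $D(\mathcal E)$), but you misidentify the technical heart: the $L^1$ issue you flag is in fact handled already, since by the remark after Proposition~\ref{TimeIto} combined with (C.2), $\Delta_v(R_X(v,t,\overline X_v))$ is integrable in $L^q$ for every $q<\infty$. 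What actually carries the proof is the handling of the side factors $\overline a_u,\overline b_u$ (which sit in $\overline{\mathcal F}_u$ but are not adapted at later reversed times): the paper writes the reversed stochastic integral through Lemma~\ref{RevStoInt}(3) as $-Z_{v,u}+\int_u^v\Delta_s(Y_s)\,ds$ with $Z_{v,u}=\int_{T-v}^{T-u}U_{T-s}\#dS_s$, then applies the forward Ito formula of Proposition~\ref{ItoSimple} to $\overline a_u Z_{v,u}\overline b_u$, tracks the quadratic-covariation corrections, and reassembles them via the adjoint identity for $\hat\delta_s^*$ before Voiculescu's formula closes the loop. Your sketch acknowledges that this ``mimics the commented-out corollary'' but that corollary treats only the trivial $a_u=b_u$ case, so the forward-Ito step is the part you would actually have to supply.

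Item (3) is where your plan has a genuine gap. You want $s\mapsto\overline\delta_s(Q_X(s,v))\in\mathcal B_p^a$ to follow from a pointwise bound $\|\overline\delta_s Q_X(s,v)\|\le C\,\|\hat\delta_s\overline X_v\|$ together with ``$L^\infty$-in-$s$ control from $X\in L^\infty(A,\tau_{T-v})$''. This fails twice over: Lemma~\ref{Unbounded}(1) gives only an $L^2$ estimate $\|\overline\delta_s Q_X(s,v)\|_2\le\|\hat\delta_s\overline X_v\|_2$, not a projective-tensor-product bound, so even when it applies it does not yield $\mathcal B_p^a$ for $p>2$; and for a general $X\in L^\infty(A,\tau_{T-v})$ the element $\overline X_v$ need not lie in $D(\hat\delta_v)$ at all in any usable sense. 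The paper's actual route is substantially different: it first establishes that the bracket $[M,N]_\sigma$ tends to zero weakly (then normwise) in $L^1$ by decomposing each increment $\mathcal N_t-\mathcal N_{t^-}=\mathcal O_{t,t^-}+\mathcal P_{t,t^-}$ with $\mathcal O$ obtained by projecting the integrand onto $\mathcal H(\overline{\mathcal F}_{t^-},\eta E_B)$, controlling $\mathcal P$ via left-continuity of $\overline{\mathcal F}_s$ and dominated convergence, and bounding $\sum\|\mathcal O_{t,t^-}\|_4^4$ through the estimate $(III)$; the $L^p$ statement and the $\mathcal B_p^a$ membership are then deduced through the ultrafilter/Burkholder--Gundy machinery of~\cite{JungePerrin} applied to $[\mathcal N,\mathcal N]_\sigma$, not by any pointwise integrand bound. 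Likewise, your claim that the null covariation of $M$ ``is exactly'' the vanishing trace identity of item (2) is too quick: that identity only yields orthogonality against $\overline{\mathcal F}_{t^-}$-measurable test elements, so what vanishes is $\tau\bigl((M_t-M_{t^-})^*(N_t-N_{t^-})E_{t^-}(a)\bigr)$; to conclude $[M,N]_\sigma\to 0$ you must still control $\sum_t\tau\bigl((M_t-M_{t^-})^*(N_t-N_{t^-})(E_t-E_{t^-})(a)\bigr)$, which is exactly the content of the $(I)$, $(II)$, $(III)$ estimates in the paper's proof and cannot be obtained from Proposition~\ref{ItoLp} alone since $M$ is not a stochastic integral.
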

The proof of (2) is much easier when $a_u,b_u\in B$, the more general case treated here will be important in the second part of this series of papers, and this is already the case with at least one of them not in $B$ since this case is crucial for the regularity part of (3). This is also of course the basis of the improved orthogonality for any stochastic integral. The zero covariation part of (3) can be seen as still an improvement of the orthogonality in (2).

\begin{proof} 
(1) This is an application of proposition \ref{TimeIto} to the reversed process using assumption (C.6) and the fact proved in proposition \ref{RevSDE} that the reversed process satisfy assumption (A).

\medskip
(2) The relation for $Q$ comes from (1). 

From (1) and this formula for $Q$, on deduces :

$$\overline{X}_{t}-Q_X(u,t)=\sum_i\int_u^t\delta_{i,v}(R_X(v,t,\overline{X}_v))\#d\overline{S}_{i,v}-\int_u^tdv(1-E_u)[\Delta_v(R_X(v,t,\overline{X}_v))].$$

We have thus mainly to compute $$\tau((1-E_u)[\Delta_v(R_P(v,t,\overline{X}_v))]\overline{a}_u\int_u^TU_s\#d\overline{S}_{s})\overline{b}_u)=
\tau([\Delta_v(R_P(v,t,\overline{X}_v))]\overline{a}_u\int_u^vU_s\#d\overline{S}_{s})\overline{b}_u)$$
since the stochastic integral is an $\overline{\mathcal{F}}$ martingale. (Note we know from lemma \ref{TimeIto} that $\Delta_v(R_P(v,t,\overline{X}_v))$ is integrable in $L^q$ for any $q<\infty$ since $q$ is arbitrary in assumption (A) for the reversed process.)

Arguing as in our proof of equation \eqref{ItoPDE}, we know that for $v\geq u$, $E_v(\overline{a}_u)=\alpha_{T-v}(K_{T-v}^{T-u}(a))$ and
 \begin{equation}\label{forwardA}\overline{a}_u=E_v(\overline{a}_u)+\int_{T-v}^{T-u}\overline{\delta}_{T-s}E_{T-s}(\overline{a}_u)\#dS_s\end{equation} and note for $v\geq w\geq u$, $E_{w}(\overline{a}_u)=E_v(\overline{a}_u)+\int_{T-v}^{T-w}\overline{\delta}_{T-s}E_{T-s}(\overline{a}_u)\#dS_s$ and similarly for $b$. Also note that from the assumption on $K$, the integrand in the stochastic integral is in $\mathcal{B}_\infty^a.$

From Ito formula in the form of proposition \ref{ItoSimple}, if we write $Z_{v,w}=\int_{T-v}^{T-w}U_{T-s}\#d{S}_s$, one deduces : 

\begin{align*}&\overline{a}_uZ_{v,u}\overline{b}_u=\int_{T-v}^{T-u}[\overline{\delta}_{T-s}E_{T-s}(\overline{a}_u)Z_{v,T-s}E_{T-s}(\overline{b}_u)+E_{T-s}(\overline{a}_u)Z_{v,T-s}\overline{\delta}_{T-s}E_{T-s}(\overline{b}_u))]\#d{S}_s\\&+\int_{T-v}^{T-u}[E_{T-s}(\overline{a}_u)U_{T-s}E_{T-s}(\overline{b}_u)]\#d{S}_s\\ &+\int_{T-v}^{T-u}dsm\circ(1\otimes \eta E_B\circ m\otimes 1)[\delta_{T-s}(E_{T-s}(\overline{a}_u))U_{T-s}E_{T-s}(\overline{b}_u)]\\ &+\int_{T-v}^{T-u}dsm\circ(1\otimes \eta E_B\circ m\otimes 1)[E_{T-s}(\overline{a}_u)U_{T-s}\delta_{T-s}(E_{T-s}(\overline{b}_u))]
\\ &+\int_{T-v}^{T-u}dsm\circ(1\otimes \eta E_B\circ m\otimes 1)[\delta_{T-s}(E_{T-s}(\overline{a}_u))Z_{v,T-s}\overline{\delta}_{T-s}(E_{T-s}(\overline{b}_u))]
\end{align*}



Since the stochastic integrals are  ${\mathcal{F}}$ martingales and $Z_{v,u}$ is orthogonal to ${\mathcal{F}}_v$, one deduces similarly using  also lemma \ref{RevStoInt}(3) $Z_{v,u}=\int_u^v\Delta_s(Y_s)ds-\int_u^v U_s\#d\overline{S}_s .$ (and free markovianity for making appear conditional expectations) and our previous computations:
\begin{align*}\tau&((1-E_u)[\Delta_v(R_P(v,t,\overline{X}_v))]\overline{a}_u(\int_u^TU_s\#d\overline{S}_{s})\overline{b}_u)\\&=
\int_u^vds\tau(E_s[\Delta_v(R_P(v,t,\overline{X}_v))]\overline{a}_u(\Delta_s(Y_s))\overline{b}_u)
\\& -\tau(\int_{u}^vdsE_s[\Delta_v(R_P(v,t,\overline{X}_v))]m\circ(1\otimes \eta E_B\otimes 1)[\delta_{s}(E_{s}(\overline{a}_u))U_sE_{s}(\overline{b}_u)+E_{s}(\overline{a}_u)U_s\delta_{s}(E_{s}(\overline{b}_u))])
\\ &-\tau(\int_{u}^{v}dw\int_w^vdsE_s[\Delta_v(R_P(v,t,\overline{X}_v))]m\circ(1\otimes \eta E_B\otimes 1)[\delta_{w}(E_{w}(\overline{a}_u))\Delta_s(Y_s)\overline{\delta}_{w}(E_{w}(\overline{b}_u))]).\end{align*}
Note we used in the last line $E_s$ where we could have used $E_w$ since $w\leq s\leq v$ so that by free markovianity $E_w|_{\alpha_{T-v}(A)}=E_wE_s|_{\alpha_{T-v}(A)}.$
Let us apply $\hat\delta$ to equation (\ref{forwardA}) for $(s\geq u)$ : $\hat{\delta}_{s}(\overline{a}_u)=\delta_{s}(E_s(\overline{a}_u))+\int_{T-s}^{T-u}\hat{\delta}_{s}
\otimes 1\overline{\delta}_{T-w}E_{T-w}(\overline{a}_u)\#_2dS_w+\int_{T-s}^{T-u}1\otimes\hat{\delta}_{s}
\overline{\delta}_{T-w}E_{T-w}(\overline{a}_u)\#_1dS_w$, as is easily checked by an extension of derivation properties to stochastic integral, knowing the domain properties of $E_{T-w}(\overline{a}_u)$ from its PDE interpretation and assumption (C.6).

Then we apply Ito formula again (after insensitive ``multiplication" by $U_s$) : \begin{align*}&E_s(m\circ(1\otimes \eta E_B\otimes 1)[\hat{\delta}_{s}(\overline{a}_u)U_s]\overline{b}_u)=m\circ(1\otimes \eta E_B\otimes 1)[\delta_{s}(E_s(\overline{a}_u))U_sE_s(\overline{b}_u))\\&+E_s(\int_{u}^{s}dwm\circ(1\otimes\eta E_B\circ m\otimes1)\circ(1^{\otimes 2}\otimes\eta E_B\circ m\otimes m\otimes1)[1\o\hat{\delta}_{s}\overline{\delta}_{w}E_{w}(\overline{a}_u)\otimes U_s\otimes\delta_{w}(E_{w}(\overline{b}_u))]\end{align*}

Thus, using the definition by adjointness of $\Delta_sY_s=\hat{\delta}_s^*\delta_sY_s$ (extension property required in (C.5)) to make appear the second type of integral of our previous formula using  for $b\in  B$  \begin{align*}\tau(bc_s\hat{\delta}_s^*(d_s\o e_s)f_s)&=\langle \hat{\delta}_s((f_sbc_s)^*),d_s\o e_s\rangle\\&=\langle \hat{\delta}_s(c_s^*)b^*,d_s\o e_sf_s\rangle+\langle b^*\hat{\delta}_s(f_s^*),c_sd_s\o e_s\rangle\\&=\tau(b[m(1\o \eta E_B)(\hat{\delta}_s(c_s)d_s)]e_sf_s)))+\tau (b c_sd_s[m( \eta E_B\o 1)(e_s\hat{\delta}_s(f_s))])\end{align*}
which implies
$$E_B(c_s\hat{\delta}_s^*(d_s\o e_s)f_s)=E_B([m(1\o \eta E_B)(\hat{\delta}_s(c_s)d_s)]e_sf_s)))+ c_sd_s[m( \eta E_B\o 1)(e_s\hat{\delta}_s(f_s))]),$$ one gets using the equation obtained from Ito formula and a symmetric variant :
\begin{align*}\tau&(\int_{u}^vdsE_s[\Delta_v(R_P(v,t,\overline{X}_v))]m\circ(1\otimes \eta E_B\otimes 1)[\delta_{s}(E_{s}(\overline{a}_u))U_sE_{s}(\overline{b}_u)+E_{s}(\overline{a}_u)U_s\delta_{s}(E_{s}(\overline{b}_u))])
\\&+\tau(\int_{u}^{v}dw\int_w^vdsE_s[\Delta_v(R_P(v,t,\overline{X}_v))]m\circ(1\otimes \eta E_B\otimes 1)[\delta_{w}(E_{w}(\overline{a}_u))\Delta_s(Y_s)\overline{\delta}_{w}(E_{w}(\overline{b}_u))])\\&=\int_{u}^vds\tau(E_s[\Delta_v(R_P(v,t,\overline{X}_v))]m\circ(1\otimes \eta E_B\otimes 1)[\hat{\delta}_{s}(\overline{a}_u)U_s\overline{b}_u)+\overline{a}_uU_s\hat{\delta}_{s}(\overline{b}_u)])\end{align*}

Thus finally we got using again Voiculescu's formula in lemma \ref{VoiculescuFormula} for $\hat\delta_s^*$ with $1\o 1$  replaced by $U_s$:
\begin{align*}\tau&((1-E_u)[\Delta_v(R_P(v,t,\overline{X}_v))]\overline{a}_u(\int_u^TU_s\#d\overline{S}_{s})\overline{b}_u)\\&=
\int_u^vds\tau(E_s[\Delta_v(R_P(v,t,\overline{X}_v))]\hat\delta_s^*(\overline{a}_u(\delta_s(Y_s))\overline{b}_u)).\end{align*}




Finally, we got using the isometric property of stochastic integral, Fubini Theorem and the definition of $Q$ :

\begin{align*} \tau&([\overline{X}_t-Q_X(u,t)]\overline{a}_u\int_u^TU_s\#d\overline{S}_{s}\overline{b}_u)
\\ &=\int_u^tds\tau(R_P(s,t,\overline{X}_s)\hat{\delta}_s^*(\overline{a}_u(U_s)\overline{b}_u))ds-\int_u^tdv\int_u^v\tau(E_s(\Delta_v(R_P(v,t,\overline{X}_v)))\hat{\delta}_s^*(\overline{a}_u(U_s)\overline{b}_u)))ds\\&= \int_u^tds\tau(Q_P(s,t)\hat{\delta}_s^*(\overline{a}_u(U_s)\overline{b}_u)).\end{align*}

The regularity statements for $Q_P$ and $E_s\Delta_v(R_P(v,t,\overline{X}_v))$ follow from the following characterization, well-known from unbounded operator theory :  $$D(\mathcal{E})=D(\Delta^{1/2})=\{f\in L^2_{biad}, \exists C>0\forall v\in D(\Delta)\ |\langle f,\Delta(v)\rangle|\leq \mathcal{E}(v)^{1/2}C\}.$$

The two orthogonalities follow from the fact $D(\Delta)$ is a core for $\mathcal{E}$ and the equations we already established. The extension to $X\in L^2(A,\tau_{T-t})$ is obvious from the closability of the form $\mathcal{E}$ and from the inequality following from the above orthogonality :
$$||Q_X(u,t)||_2^2+\int_u^t||\delta_sQ_X(s,t)||^2ds\leq ||\overline{X}_t||_2^2$$

The case of orthogonality with any backward stochastic integral is proved first on elementary ones coming from simple biprocesses, which reduces to the previous orthogonality by free markovianity  and taking $U_s=1\o 1$ before in the proof. The general case is obvious by continuity. The reader should note though that the biadapted case was crucial to obtain before to get the domain property of $Q$ by quadratic form techniques.

\medskip
(3)
 The martingale property in $L^2$ is obvious. We start by proving that the covariation converges weakly in $L^1$ to zero when the mesh of the partition goes to zero.

Consider first $\sigma$ a finite partition of $[u,v]$, a fixed interval. We have to estimate the (column) martingale bracket of $M=M^{[u,v]}(X)$ and $\mathcal{N}=\mathcal{N}^{[u,v]}(X)$, $X\in L^\infty(A,\tau_{T-v})$, ${N}=N^{[u,v]}$ the generic stochastic integral as in the second case of quadratic variation statement of the proposition (including the first one as special case when $N=\mathcal{N}$) : $$[M,N]_{\sigma}=\sum_{t\in\sigma, t\neq u}(M_t-M_{t^{-}})^*(N_t-N_{t^{-}})$$ with as usual $t^{-}$ the time just before $t$ in the partition. Taking $a\in M$ and using in the second line the orthogonality in (2) (with $a_u$ replaced by any $E_u(a)b_v,v\leq u$ by free markovianity), one gets :
\begin{align*}\tau(a^*[M,N]_{\sigma})&=\sum_{t\in\sigma, t\neq u}\tau((M_t-M_{t^{-}})^*(N_t-N_{t^{-}})E_t(a^*)))\\&=\sum_{t\in\sigma, t\neq u}\tau((M_t-M_{t^{-}})^*(N_t-N_{t^{-}})(E_t(a^*)-E_{t^{-}}(a^*)))
\\&=\sum_{t\in\sigma, t\neq u}\tau((Q_t-Q_{t^{-}})^*(N_t-N_{t^{-}})(E_t(a^*)-E_{t^{-}}(a^*)))\\&-\sum_{t\in\sigma, t\neq u}\tau((\mathcal{N}_t-\mathcal{N}_{t^{-}})^*(N_t-N_{t^{-}})(E_t(a^*)-E_{t^{-}}(a^*)))=(I)-(II)
\end{align*}
with $(M_t-M_{t^{-}}):=(Q_t-Q_{t^{-}})-(\mathcal{N}_t-\mathcal{N}_{t^{-}})$, i.e. $Q_t=(Q_X(t,v)-Q_X(u,v)$

Let us define $\mathcal{O}_{t,t^{-}}=\int_{t^{-}}^tP_{t^{-}}(\overline{\delta}_s(Q_X(s,v)))\#d\overline{S}_{s}$ where $P_{u}$ is the projection on $\mathcal{H}(\overline{\mathcal{F}}_u,\eta E_B).$

Note that (computing first by density with simple stochastic integrals, on gets : \begin{align}\begin{split}E_{t-}(\mathcal{O}_{t,t^{-}}^*\mathcal{O}_{t,t^{-}})&=E_{t^{-}}(\int_{t^{-}}^tds\langle P_{t^{-}}(\overline{\delta}_s(Q_X(s,v))),P_{t^{-}}(\overline{\delta}_s(Q_X(s,v)))\rangle)\\&\leq E_{t^{-}}(\int_{t^{-}}^tds\langle (\overline{\delta}_s(Q_X(s,v))),(\overline{\delta}_s(Q_X(s,v)))\rangle)
\\&=E_{t-}((\mathcal{N}_t-\mathcal{N}_{t^{-}})^*(\mathcal{N}_t-\mathcal{N}_{t^{-}}))
\leq E_{t-}((Q_t-Q_{t^{-}})^*(Q_t-Q_{t^{-}}))\in M.\end{split}\label{boundO}\end{align}
where the last equality comes from Ito isometry, and in the last inequality we use again the orthogonality from part (2) to add $M$ and replace $N$ by $M+\mathcal{N}=Q.$ As a consequence; all the terms are in $M$ and not only in $L^1(M).$

Especially $V_s=P_{t^{-}}(\overline{\delta}_s(Q_X(s,v)))1_{[t^{-},t[}$ is in $\mathcal{B}_p^a$ for all $p<\infty$ and thus from proposition \ref{Bpa}, 
$\mathcal{O}_{t,t^{-}}\in L^p(M)$ for $p<\infty.$
We will use this to get extra boundedness in writing $(\mathcal{N}_t-\mathcal{N}_{t^{-}})=\mathcal{O}_{t,t^{-}}+\mathcal{P}_{t,t^{-}}$ with $\mathcal{P}_{t,t^{-}}=\int_{t^{-}}^t(1-P_{t^{-}})(\overline{\delta}_s(Q_X(s,v)))\#d\overline{S}_{s},N_{t,t^{-}}=N_t-N_{t-} $ and let us also write $N_{t,t^{-}}=\lim_n N_{t,t^{-}}^n$ for elementary stochastic integrals, especially in $L^p(M).$

Using Cauchy-Schwarz inequality, on gets :
\begin{align*}|(II)|&\leq 2||a||\left((\sum_{t\in\sigma, t\neq u}||P_{t,t^{-}}||_2^2)^{1/2}(\sum_{t\in\sigma, t\neq u}||N_{t,t^{-}}||_2^2)^{1/2}\right)\\&+\left |\sum_{t\in\sigma, t\neq u}\tau((O_{t,t^{-}})^*(N_{t,t^{-}})(E_t(a^*)-E_{t^{-}}(a^*))) \right|=(II_1)+(II_2).\end{align*}

Note that by Ito isometry  $\sum_{t\in\sigma, t\neq u}||P_{t,t^{-}}||_2^2=\int_{u}^v||(1-P_{m(\sigma,s)})(\overline{\delta}_s(Q_X(s,v)))||_2^2$, with $m(\sigma,s)=\max\{t\in \sigma, t\leq s\}$. Let us see this goes to zero along a sequence (or net) of refining partitions of mesh going to zero. Indeed, along refining partitions, $m(\sigma,s)$ is increasing for each $s$, thus $\sigma\mapsto ||(1-P_{m(\sigma,s)})(\overline{\delta}_s(Q_X(s,v)))||_2^2$ is decreasing, and since if the mesh tends to zero $m(\sigma,s)\to s$ thus 
$P_{m(\sigma,s)}\to P_s$ by left continuity of the reversed filtration, so that monotone convergence theorem concludes. Thus $(II_1)$ goes to zero under the same condition. If the net of partitions is not refining, it suffices to apply dominated convergence theorem, as soon as the mesh goes to zero, the conclusion is the same.

Using Cauchy-Schwarz again and the a priori knowledge that $O_{t,t^{-}}\in L^4$ one gets :

\begin{align*}(II_2)&\leq \left|\sum_{t\in\sigma, t\neq u}\tau((E_t(a)-E_{t^{-}}(a))(E_t(a^*)-E_{t^{-}}(a^*))(O_{t,t^{-}})^*(O_{t,t^{-}})) \right|^{1/2}\left|\sum_{t\in\sigma, t\neq u}||N_{t,t^{-}}||_2^2 \right|^{1/2}\\&
\leq (0)^{1/4}\left|\sum_{t\in\sigma, t\neq u}\tau((O_{t,t^{-}})^*(O_{t,t^{-}})(O_{t,t^{-}})^*(O_{t,t^{-}}))) \right|^{1/4} ||N_v-N_u||_2\end{align*}
so that it will be convenient to introduce $$(III)=\left|\sum_{t\in\sigma, t\neq u}\tau((O_{t,t^{-}})^*(O_{t,t^{-}})(O_{t,t^{-}})^*(O_{t,t^{-}}))) \right|,$$
$$(III_n)=\left|\sum_{t\in\sigma, t\neq u}\tau((N^n_{t,t^{-}})^*(N^n_{t,t^{-}})(N^n_{t,t^{-}})^*(N^n_{t,t^{-}}))) \right|,$$
 $$(0)=\left|\sum_{t\in\sigma, t\neq u}\tau([(E_t(a)-E_{t^{-}}(a))(E_t(a^*)-E_{t^{-}}(a^*))]^2) \right|.$$
Likewise, we get : 
\begin{align*}|(I)|&\leq \left((0)^{1/4}||N_{v,u}-N_{v,u}^n||_2+||a||_2(III_n)^{1/4}\right)\\&\times\left(\sum_{t\in\sigma, t\neq u}\tau((Q_t-Q_{t^{-}})^*(Q_t-Q_{t^{-}})(Q_t-Q_{t^{-}})^*(Q_t-Q_{t^{-}}))) \right)^{1/4}.\end{align*}
Since $\left(\sum_{t\in\sigma, t\neq u}||Q_t-Q_{t^{-}})||_4^4 \right)\leq 2||X_v||^2\left(\sum_{t\in\sigma, t\neq u}\tau((Q_t-Q_{t^{-}})^*(Q_t-Q_{t^{-}}))) \right)=2||X_v||^2||X_v||_2^2$ and likewise $(0)\leq 2||a||^2||a||_2^2$, it suffices to bound $(III)$, $(III_n)$.
They will tend to zero with the mesh of the partition since they come from  regularized continuous martingales. Then we will make tend $n\to \infty,||N_{v,u}-N_{v,u}^n||_2\to 0$. Let us explain the bound for (III).

By Ito formula in the form of proposition \ref{ItoLp}, we can write \begin{align*}(O_{t,t^{-}})^*(O_{t,t^{-}})&=\int_{t^{-}}^t[(O_{s,t^{-}})^*P_{t^{-}}(\overline{\delta}_s(Q_X(s,v)))]\#d\overline{S}_{s}+\int_{t^{-}}^t[[P_{t^{-}}(\overline{\delta}_s(Q_X(s,v)))]^*(O_{s,t^{-}})]\#d\overline{S}_{s}
\\&+\int_{t^{-}}^t\langle P_{t^{-}}(\overline{\delta}_s(Q_X(s,v))),P_{t^{-}}(\overline{\delta}_s(Q_X(s,v)))\rangle ds=A+B+C\end{align*}
The three terms are know to be in $L^2$  and $C\in \overline{\mathcal{F}}_{t-}$ is thus orthogonal to $A$ and $B$.

But note also $B=A^*$ and more is true using Ito isometry, we have $$\tau(A^*B)=\tau(B^2)=\int_{t^{-}}^t\tau[\langle (O_{s,t^{-}})^*P_{t^{-}}(\overline{\delta}_s(Q_X(s,v)))],[P_{t^{-}}(\overline{\delta}_s(Q_X(s,v)))]^*(O_{s,t^{-}})]\rangle] ds=0,$$
using again  that the stochastic integrals $O_{s,t^{-}}$ are orthogonal to $\overline{\mathcal{F}}_{t-}$ so that $A$ and $B$ are also orthogonal. 

Thus let us bound using first Ito isometry and \eqref{boudO} :
\begin{align*}&||A||_2^2=||B||_2^2=\int_{t^{-}}^tds\tau(\langle E_{t^{-}}[(O_{s,t^{-}})(O_{s,t^{-}})^*]P_{t^{-}}(\overline{\delta}_s(Q_X(s,v))),P_{t^{-}}(\overline{\delta}_s(Q_X(s,v)))\rangle) ds
\\&\leq \int_{t^{-}}^tds\tau( \int_{t^{-}}^sduE_{t^{-}}[\langle \overline{\delta}_u(Q_X(u,v))),\overline{\delta}_u(Q_X(u,v))\rangle]\langle\overline{\delta}_s(Q_X(s,v))),\overline{\delta}_s(Q_X(s,v))\rangle),\end{align*}

A similar easy computation gives twice this bound for $||C||_2^2$ so that we get :

\begin{align*}(III)&=4\int_{u}^vds\tau( \int_{m(\sigma,s)}^sduE_{m(\sigma,s)}[\langle \overline{\delta}_u(Q_X(u,v))),\overline{\delta}_u(Q_X(u,v))\rangle]\langle\overline{\delta}_s(Q_X(s,v))),\overline{\delta}_s(Q_X(s,v))\rangle)\end{align*}

Since $||\int_{m(\sigma,s)}^sduE_{m(\sigma,s)}[\langle \overline{\delta}_u(Q_X(u,v))),\overline{\delta}_u(Q_X(u,v))\rangle]||\leq (2||X_u||)^2$ again by \eqref{boundO}, we get a domination, and since $m(\sigma,s)\to s$ when the mesh of $\sigma$ tends to zero, the integral $\int_{m(\sigma,s)}^sduE_{m(\sigma,s)}[\langle \overline{\delta}_u(Q_X(u,v))),\overline{\delta}_u(Q_X(u,v))\rangle]$ converges in $L^1$ thus weak-* in $M$  to zero. Thus by dominated convergence theorem, (III) also tends to zero.

To bound $(III)_n$ if we look at refining partitions, we can look at partitions refining the one of $N_n=\int_u^vU_s^n\#d\overline{S}_s$.
We have thus the same computation as for (III) :

\begin{align*}(III_n)&=4\int_{u}^vds\tau( \int_{m(\sigma,s)}^sduE_{m(\sigma,s)}[\langle U^n_u,U^n_u\rangle]\langle U^n_s,U^n_s)\rangle)\to_{\sigma} 0\end{align*}
This concludes the proof that the covariation converges to zero weakly in $L^1$, and even normwise in $L^1$, since our bound above is uniform in $||a||$. Let us now see that $\mathcal{N}$ is a martingale in $L^p$.

We bound using orthogonality and \eqref{boundO} :

\begin{align*}||&\sum_{t\in\sigma, t\neq u}(O_{t,t^{-}})^*(O_{t,t^{-}})||_2^2=2\tau[\sum_{t\in\sigma, t\neq u}(O_{t,t^{-}})^*(O_{t,t^{-}})E_t(\sum_{s\in \sigma,s>t}(O_{s,s^{-}})^*(O_{s,s^{-}}))]+(III)^2
\\& =2\tau[\sum_{t\in\sigma, t\neq u}(O_{t,t^{-}})^*(O_{t,t^{-}})E_t(\sum_{s\in \sigma,s>t}E_{s^{-}}[(O_{s,s^{-}})^*(O_{s,s^{-}})])]+(III)^2
\\& \leq 2\tau[\sum_{t\in\sigma, t\neq u}(O_{t,t^{-}})^*(O_{t,t^{-}})E_t(\sum_{s\in \sigma,s>t}(Q_s-Q_{s^{-}})^*(Q_s-Q_{s^{-}})]+(III)^2
\\& = 2\tau[\sum_{t\in\sigma, t\neq u}(O_{t,t^{-}})^*(O_{t,t^{-}})E_t((Q_v-Q_{t})^*(Q_v-Q_{t})]+(III)^2
\\&\leq 8||X_v||^2||X_v||_2^2+(III)^2
\end{align*}
Taking an ultrafilter $\mathcal{U}$ on partitions as in \cite{JungePerrin} there is an $L^2$ weak limit $w-L^2 \Lim_{\sigma,\mathcal{U}} \sum_{t\in\sigma, t\neq u}(O_{t,t^{-}})^*(O_{t,t^{-}}).$

Let us deduce there is an $L^1$ weak limit to $[\mathcal{N},\mathcal{N}]_{\sigma}$ along $\mathcal{U}$. Indeed we have : $$[\mathcal{N},\mathcal{N}]_{\sigma}=\sum_{t\in\sigma, t\neq u}(O_{t,t^{-}})^*O_{t,t^{-}}+\sum_{t\in\sigma, t\neq u}(P_{t,t^{-}})^*O_{t,t^{-}}+\sum_{t\in\sigma, t\neq u}((O_{t,t^{-}})^*P_{t,t^{-}}\sum_{t\in\sigma, t\neq u}((P_{t,t^{-}})^*P_{t,t^{-}}$$ and the last three terms tend to zero normwise in $L^1$ by our previous bound so that $w-L^1\Lim_{\sigma,\mathcal{U}}[\mathcal{N},\mathcal{N}]_{\sigma}=w-L^2 \Lim_{\sigma,\mathcal{U}} \sum_{t\in\sigma, t\neq u}(O_{t,t^{-}})^*(O_{t,t^{-}})\in L^2,$ and we have:
$$||w-L^1\Lim_{\sigma,\mathcal{U}}[\mathcal{N},\mathcal{N}]_{\sigma}||_2^2\leq 8||X_v||^2||X_v||_2^2$$

Actually, we are now ready to conclude more since $[\mathcal{N},\mathcal{N}]_{\sigma}+[M,M]_\sigma=[M+\mathcal{N},M+\mathcal{N}]_{\sigma}-[M,\mathcal{N}]_{\sigma}-[\mathcal{N},M]_{\sigma}.$ and we saw the last two terms tend weakly in $L^1$ to zero, since moreover $M+\mathcal{N}$ is bounded in $M$, thus the covariation as a weak $L^p$ limit for $p>1$ as in \cite{JungePerrin} using mainly Pisier-Xu noncommutative Burkholder-Gundy inequalities (cf their equation (2.1) we use bellow). As a consequence, we know the following limits exists and with a bound for $p\in [1,\infty[$ :
\begin{align*}||w-L^1\Lim_{\sigma,\mathcal{U}}[\mathcal{N},\mathcal{N}]_{\sigma}||_p&\leq ||w-L^1\Lim_{\sigma,\mathcal{U}}[\mathcal{N},\mathcal{N}]_{\sigma}+w-L^1\Lim_{\sigma,\mathcal{U}}[M,M]_{\sigma}||_p\\&\leq ||w-L^p\Lim_{\sigma,\mathcal{U}}[M+\mathcal{N},M+\mathcal{N}]_{\sigma}||_p\\&\leq 4\alpha_p^2 ||X_v||_p^2.\end{align*}

It mostly remains to compute $w-L^1\Lim_{\sigma,\mathcal{U}}[\mathcal{N},\mathcal{N}]_{\sigma}$ and get the expected value $$\int_{u}^vds\langle (\overline{\delta}_s(Q_X(s,v))),(\overline{\delta}_s(Q_X(s,v)))\rangle.$$ This will conclude the proof of our statement in showing $\mathcal{N}$ is a stochastic integral from an element in $\mathcal{B}_p^a.$
Thus take $a\in M$ and decompose by orthogonality to bound : \begin{align*}&\left|\tau([\mathcal{N},\mathcal{N}]_{\sigma}a)-\tau(\int_{u}^vds\langle (\overline{\delta}_s(Q_X(s,v))),(\overline{\delta}_s(Q_X(s,v)))\rangle a)\right|\\&\leq \left|\sum_{t\in\sigma}\tau((\mathcal{N}_t-\mathcal{N}_{t^{-}})^*(\mathcal{N}_t-\mathcal{N}_{t^{-}})E_{t^{-}}(a)))-\tau(\int_{u}^vds\langle (\overline{\delta}_s(Q_X(s,v))),(\overline{\delta}_s(Q_X(s,v)))\rangle a)\right|\\&+\left|\sum_{t\in\sigma}\tau((\mathcal{N}_t-\mathcal{N}_{t^{-}})^*(\mathcal{N}_t-\mathcal{N}_{t^{-}})(E_t-E_{t^{-}})(a)))\right|=(IV)+(II).\end{align*}

Since we already bounded (II) is remains to bound (IV), but using an equality proved in \eqref{boundO}, we have :
$$|(IV)|= \left|\tau(\int_{u}^vds\langle (\overline{\delta}_s(Q_X(s,v))),(\overline{\delta}_s(Q_X(s,v)))\rangle (a-E_{m(s,\sigma)}(a)))\right|$$
and the convergence to zero again follows from dominated convergence theorem using the left continuity of the filtration. We thus also actually see the weak limit in $L^1$ of $[N,N]_{\sigma}$ without using an ultrafilter.

Now that we know $M$ is a martingale in $L^p$ for all $p>1$ we can improve the convergence of the quadratic variation with $N$ coming from $U\in B_p^a$. From Hölder inequality for covariations, the covariation is bounded in any  $L^r$, $r< p$, thus by Hölder again, and the normic convergence in $L^1$ above, this concludes.

\end{proof}
\medskip

\subsection{Consequences for regularity of conjugate variables and liberation gradient}

\begin{corollary}\label{BrownianConj}
Let $X_t=(X_1+S_t^1...,X_n+S_t^n)$ be a B-free Brownian motion of covariance $\eta$  starting à $(X_1,...,X_n)$ as in proposition \ref{BrownianC}. Then, for almost every any $s>0$, the conjugate variable $\xi_{i,s}=\xi_i(X_1+S_t^1...,X_n+S_t^n:B,\eta)$ is in the domain of the $L^2$ closure of the corresponding free difference quotient  and for any $s\geq t$:

$$\Phi^*(X_t:B,\eta)\geq \Phi^*(X_s:B,\eta)+\int_s^tdu\sum_{i}||\delta(\xi_{i,u})||_{\mathcal{H}(W^*(X_u);\eta\circ E_B)^n}^2.$$

Moreover $\int_s^t m\circ( 1\o \eta E_B m\o 1)[\delta(\xi_{i,u})\o \delta(\xi_{i,u})]\in L^p(M)$ for any $p<\infty.$
\end{corollary}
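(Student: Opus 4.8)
The plan is to specialize the abstract machinery of Section 2 to the setting of Proposition \ref{BrownianC}, where $X_t = X + S_t$, $\delta = (\partial_1,\dots,\partial_n)$ is the free difference quotient, $Q = 0$, and the reversed drift identity in Proposition \ref{RevSDE} reads $\overline{\alpha}_s(\overline{Q}_{i,s}) = -\overline{\xi}_{i,s}$, with $\overline{\xi}_{i,s} = \xi_i(X_{T-s}:B,\eta)$. The key input is Proposition \ref{AltForm}(3): for $X\in L^\infty(A,\tau_{T-v})$ the process $\mathcal{N}_t^{[u,v]}(X) = \int_u^t \overline{\delta}_s(Q_X(s,v))\#d\overline{S}_s$ is an $L^p$-martingale for all $p\in[1,\infty)$, i.e. $s\mapsto 1_{[u,v)}(s)\overline{\delta}_s(Q_X(s,v)) \in \mathcal{B}_p^a(\overline{\mathcal{F}})$, and Proposition \ref{AltForm}(2), which gives $v\mapsto 1_{[u,t)}(v)Q_X(v,t)\in D(\mathcal{E})$, hence a.e.\ $Q_X(v,t)\in D(\overline{\delta_v})$. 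First I would identify the martingale $\overline{\xi}_{i,s}$ inside this framework: since $\overline{\xi}_{i,t} = E_{\overline{\mathcal{F}}_s}(\overline{\xi}_{i,t})$ is exactly $E_s$ applied (this is the Voiculescu/Shlyakhtenko martingale property recalled in Section \ref{Entropy}), we have $\overline{\xi}_{i,v} = Q_{?}(v,t)$ for an appropriate choice; more precisely one applies Proposition \ref{AltForm}(2) in its extension to $X\in L^2(A,\tau_{T-t})$, taking $X = \overline{\xi}_{i,t}/$(suitable normalization) — but cleanly, one uses that $T\overline{\xi}_{i,t}^{(\text{rescaled})}$ relates to $E_{L^2(B\langle X_{T-t}\rangle)}[S^{(i)}_{T-t}]$, so that $\overline{\xi}_{i,s}$ itself is of the form $Q_X(s,t)$ up to the explicit algebra of Section \ref{Entropy}. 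Then Proposition \ref{AltForm}(2) (extended version) directly gives $s\mapsto 1_{[u,t)}(s)\overline{\xi}_{i,s}\in D(\mathcal{E})$, i.e. for a.e.\ $s$, $\overline{\xi}_{i,s}\in D(\overline{\delta_s})$, which after the time change $s\mapsto T-s$ is precisely the statement that $\xi_{i,s}\in D(\overline{\partial_s})$ for a.e.\ $s>0$.

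Next I would establish the Fisher information inequality. The point is that $\overline{\xi}_{i,s}$ is an $\overline{\mathcal{F}}$-martingale, so $\Phi^*(X_{T-s}:B,\eta) = \sum_i \|\overline{\xi}_{i,s}\|_2^2$ is increasing in $s$ on $[0,T)$, and the increments are controlled by its "stochastic integral part." Concretely, using Proposition \ref{AltForm}(2,3) the martingale $\overline{\xi}_{i,s}$ decomposes (on an interval) as $\overline{\xi}_{i,u} + \int_u^s \overline{\delta}_v(\overline{\xi}_{i,v})\#d\overline{S}_v + (\text{a drift-type remainder orthogonal to stochastic integrals})$, where the remainder comes from the $(1-E_u)\Delta_v(\cdots)$ term; the orthogonality statements in Proposition \ref{AltForm}(2) show the cross term vanishes, so by Ito isometry
$$\|\overline{\xi}_{i,s}\|_2^2 \geq \|\overline{\xi}_{i,u}\|_2^2 + \int_u^s \|\overline{\delta}_v(\overline{\xi}_{i,v})\|_{\mathcal{H}(W^*(\overline{X}_v),\eta E_B)^n}^2\, dv.$$
Summing over $i$, rewriting via $s\mapsto T-s$, $u\mapsto T-t$ (with $s\le t$ in the original time), and using $\Phi^*(X_u:B,\eta) = \sum_i\|\xi_{i,u}\|_2^2$ gives the displayed inequality. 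Some care is needed at the endpoint near time $0$ in the original sense (time $T$ in the reversed sense), where $\overline{\xi}$ becomes singular; here one works on $[T-t,T-s]$ with $s>0$ strictly, and the bound $\|\overline{\xi}_{i,v}\|\le C/\sqrt{T-v}$ from (C.2) keeps everything finite, exactly as in the proof of the reversed Brownian motion.

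Finally, the $L^p$-membership of $\int_s^t m\circ(1\otimes \eta E_B m\otimes 1)[\delta(\xi_{i,u})\otimes\delta(\xi_{i,u})]\,du$: this is the "quadratic variation" of the martingale $\mathcal{N}^{[u,v]}(\overline{\xi}_i)$, which by the last assertion of Proposition \ref{AltForm}(3) equals $\int_u^v ds\,\langle \overline{\delta}_s(Q_{\overline{\xi}_i}(s,v)), \overline{\delta}_s(Q_{\overline{\xi}_i}(s,v))\rangle$ — but $\overline{\delta}_s(Q_{\overline{\xi}_i}(s,v)) = \overline{\delta}_s(\overline{\xi}_{i,s})$ by the martingale identity above, and $\langle U,U\rangle = m\circ(1\otimes \eta E_B m\otimes 1)(U^*\otimes U)$ by definition — and Proposition \ref{AltForm}(3) asserts this covariation lies in $L^p(M)$ for every $p<\infty$ (it is shown there that $\mathcal{N}$ is a stochastic integral of an element of $\mathcal{B}_p^a$, whose quadratic variation is in $L^{p/2}$ by the Burkholder-Gundy bounds used in Lemma \ref{Bpa}). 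After the change of variables to forward time this is exactly the claimed statement. The main obstacle is the careful identification of $\overline{\xi}_{i,s}$ with a $Q_X(s,t)$-type object and checking the orthogonality of the remainder term so that Ito's isometry applies to give an inequality (rather than merely an identity modulo an uncontrolled cross term) — this is where the full strength of the orthogonality-with-any-backward-stochastic-integral in Proposition \ref{AltForm}(2) and the zero-covariation in Proposition \ref{AltForm}(3) is used; everything else is bookkeeping plus the time reversal $s\leftrightarrow T-s$.
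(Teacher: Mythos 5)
Your proposal follows essentially the same route as the paper: identify $\overline{\xi}_{i,s}$, via Voiculescu's martingale property of conjugate variables, with $Q_X(s,\cdot)$ for the (abstract) element $X=\xi_{i,u}\in L^\infty(A,\tau_u)$, invoke Proposition~\ref{AltForm}(2) to place $s\mapsto 1_{[\cdot)}(s)\,\overline{\xi}_{i,s}$ in $D(\mathcal{E})$ (hence $\xi_{i,s}\in D(\overline{\partial_s})$ a.e.), then use the decomposition and null-covariation statements of Proposition~\ref{AltForm}(3) together with It\^o isometry to deduce the entropy inequality, and the $\mathcal{B}_p^a$-membership to get the $L^p$ claim. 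This is exactly the paper's argument, modulo minor bookkeeping in the time reversal.
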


\begin{remark} In \cite[section 3.8]{VoS}, the unknown continuity of $t\mapsto \Phi^*(X_t:B,\eta)$ is mentioned as a technical problem. It is still not known to be true (or false), but the inequality above improves the decreasingness of this function, and the inequality above is expected to be an equality, as in the classical case. Especially, as the proof bellow will show, this equality would be true if one could prove that the reversed process is a strong solution, i.e. the reversed filtration $\overline{\mathcal{F}}$ is generated by the reversed free Brownian motion. It would be also enough to prove $M_{X_i}^{[0,T]}=0$ This is the aim of the investigation of further regularity of the reversed process in the following parts of this sequence of papers.
\end{remark}

\begin{proof}
We already recalled $\xi_{i,s}=E_{B\langle W^*(X_s)\rangle}(\xi_{i,v})$ for $s>v$, thus applying Proposition \ref{AltForm} to $X=\xi_{i,v}\in L^\infty(A,\tau_{v})$ one gets 
an improved reversed martingale property of $\xi_{T-s}$ on $[0,T-u[$ as :
\begin{equation}\xi_{i,s}=\xi_{i,T}+\int_0^{T-s}\overline{\delta}(\xi_{T-w}^i)\#d\overline{S}_{w}+dM_{T-s}^{[0,T-u]}(\xi_{i,u})\end{equation}
Given the quadratic variation computed in Proposition \ref{AltForm} (3) our three terms are orthogonal and one deduces the expected  inequality :
$||\xi_{i,s}||_2^2\geq ||\xi_{i,T}||_2^2+\int_T^{s}||\overline{\delta}(\xi_{w}^i)||^2dw.$

\end{proof}

\begin{corollary}\label{LiberationConj}
Let $\alpha_t(A_i)$ be a liberation process starting at $(A_1,...,A_n)$ in presence of $B$ as in proposition \ref{liberationC}. Then, for almost every any $t>0$, the liberation gradient $j_{i,t}=j(U_t^{(i)}A_iU_t^{(i)*};U_t^{(1)}A_1U_t^{(1)*}\vee ... \widehat{U_t^{(i)}A_iU_t^{(i)*}}...\vee U_t^{(n)}A_nU_t^{(n)*}: B)$ is in the domain of the $L^2$ closure of the corresponding liberation derivations $\delta_i$ and for any $s\geq t$ if $\delta_B=\sum_{i}\delta_i$, then :

\begin{align*}\varphi^*&(\alpha_t(A_1);...,\alpha_t(A_n):B)\\&\geq \varphi^*(\alpha_s(A_1);...,\alpha_s(A_n):B)+\int_s^tdu\sum_{i}\left(||(\delta_i-\delta_B)(j_{i,u})||_{2}^2+\sum_{j\neq i}||\delta_j(j_{i,u})||_{2}^2\right).\end{align*}
\end{corollary}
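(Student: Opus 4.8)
The plan is to transcribe, almost verbatim, the proof of Corollary~\ref{BrownianConj}: the only structural change is that the straightforward reversed martingale identity $\xi_{i,s}=E_{B\langle W^*(X_s)\rangle}(\xi_{i,v})$ used there is replaced by its liberation counterpart, the projection formula \eqref{projLiberation}, which expresses $j_{i,r}$ for $r>v$ as the $\overline{\mathcal F}$-conditional expectation of the liberation gradient $j_{i,v}$ \emph{twisted} by the unitary Brownian increments $U^{(i)}_rU^{(i)*}_v$. Having already verified Assumption~(C) for this process in Proposition~\ref{liberationC}, I would fix $0<v<s<T$, regard $j_{i,v}\in L^2(W^*(\alpha_v(A)))\cong L^2(A,\tau_v)$, and invoke the $L^2$-extension in Proposition~\ref{AltForm}(2), which is designed precisely to accommodate a possibly unbounded input: $s'\mapsto Q_{j_{i,v}}(s',T-v)=E_{s'}(j_{i,v})\in D(\mathcal E)$. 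By the description of $D(\mathcal E)$ this means $E_{s'}(j_{i,v})\in D(\overline{\delta_{s'}})$ for a.e.\ $s'$; undoing the twist by means of \eqref{projLiberation} and \eqref{hatdeltaLiberation} (conjugation by the $U^{(k)}_\cdot$ preserves domains of the closed liberation derivations), this gives the first assertion, that $j_{i,r}$ lies in the $L^2$-domain of the liberation derivations computed at $W^*(\alpha_r(A))$ for a.e.\ $r>0$.

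For the inequality I would use the martingale decomposition of Proposition~\ref{AltForm}(2)--(3) applied to $X=j_{i,v}$, with base time $u=0$:
$$Q_{j_{i,v}}(T-s,T-v)=Q_{j_{i,v}}(0,T-v)+\int_0^{T-s}\overline{\delta}_w\big(Q_{j_{i,v}}(w,T-v)\big)\#d\overline S_w+M_{T-s}^{[0,T-v]}(j_{i,v}).$$
By Proposition~\ref{AltForm}(2) the martingale $M^{[0,T-v]}(j_{i,v})$ is orthogonal to every backward stochastic integral, in particular to the middle term, so Pythagoras together with Ito's isometry and the substitution $w\leftrightarrow T-w$ give
$$\big\|Q_{j_{i,v}}(T-s,T-v)\big\|_2^2\ \ge\ \big\|Q_{j_{i,v}}(0,T-v)\big\|_2^2+\int_s^T\big\|\overline{\delta}_{T-r}\big(Q_{j_{i,v}}(T-r,T-v)\big)\big\|^2\,dr.$$
Using \eqref{projLiberation} the left-hand side equals $\|j_{i,s}\|_2^2$ and the first term on the right equals $\|j_{i,T}\|_2^2$ (the twisting unitaries, being $L^2$-isometries, disappear under $\|\cdot\|_2$), while the integrand, obtained by differentiating the twisted object through \eqref{hatdeltaLiberation} and using the freeness with amalgamation of the unitary Brownian motions together with Voiculescu's formula (Lemma~\ref{VoiculescuFormula}) in $W^*(\alpha_r(A))$, comes out as $\|(\delta_i-\delta_B)(j_{i,r})\|_2^2+\sum_{k\ne i}\|\delta_k(j_{i,r})\|_2^2$; here the correction $-\delta_B$ on the $i$-th slot records precisely the contribution of the $i$-th unitary increment carried along by the twist, which is absent in the Brownian setting. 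Thus $\|j_{i,s}\|_2^2\ge\|j_{i,T}\|_2^2+\int_s^T\big(\|(\delta_i-\delta_B)(j_{i,r})\|_2^2+\sum_{k\ne i}\|\delta_k(j_{i,r})\|_2^2\big)\,dr$.

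Finally I would sum over $i$, use $\varphi^*(\alpha_r(A_1),\dots,\alpha_r(A_n):B)=\sum_i\|j_{i,r}\|_2^2$, and subtract two instances of this inequality at the two times (using the right-continuity and at most countably many discontinuities of $r\mapsto j_{i,r}$ and of $\varphi^*$ established in the Proposition of section~\ref{Entropy}, and letting the regularization $v$ shrink so as to reach every $s\ge t$), thereby obtaining the stated inequality. The main obstacle, and the only genuine novelty compared with Corollary~\ref{BrownianConj}, is the twist bookkeeping: one must check that $Q_{j_{i,v}}(s',T-v)=E_{s'}(j_{i,v})$ is indeed the conditional expectation to which \eqref{projLiberation} applies, that the twist affects neither $L^2$-norms nor domains, and—most delicately—that applying $\overline{\delta}_{s'}$ across the twist produces the combination $(\delta_i-\delta_B)$ rather than plain $\delta_i$ on the $i$-th component; this is exactly where \eqref{hatdeltaLiberation} and the explicit form of the unitary Brownian SDE in Example~\ref{liberation} are essential.
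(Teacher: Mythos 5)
There is a genuine gap, and it is exactly the step the paper flags as the one that does not go through. You propose to feed $X=j_{i,v}$ directly into Proposition~\ref{AltForm}, obtaining $Q_{j_{i,v}}(s',T-v)=E_{s'}(j_{i,v})$ together with its martingale decomposition, and then to recover $j_{i,T-s'}$ by "undoing the twist" of \eqref{projLiberation}. But the twist in \eqref{projLiberation} sits \emph{inside} the conditional expectation:
\begin{align*}
j_{i,T-s'}=E_{s'}\bigl[\,U_{T-s'}^{(i)}U_{T-v}^{(i)*}\,j_{i,T-v}\,U_{T-v}^{(i)}U_{T-s'}^{(i)*}\,\bigr]\neq U_{T-s'}^{(i)}U_{T-v}^{(i)*}\,E_{s'}\bigl[j_{i,T-v}\bigr]\,U_{T-v}^{(i)}U_{T-s'}^{(i)*},
\end{align*}
because $\mathrm{Ad}\bigl(U_{T-s'}^{(i)}U_{T-v}^{(i)*}\bigr)$ does not carry $W^*(\alpha_{T-v}(A),B)$ onto $W^*(\alpha_{T-s'}(A),B)$ (it moves only the $i$-th algebra correctly, not the others), so it does not commute with $E_{s'}$. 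Hence $E_{s'}(j_{i,v})$ is not a conjugate of $j_{i,T-s'}$, and your post-hoc conjugation does not produce the liberation gradient. This is precisely what the paper means when it writes that the description of $j_{i,t}$ via \eqref{projLiberation}, "not being a projection of an element of $\alpha_s(A)$, is not well suited for application of proposition \ref{AltForm}. We would prefer to project first and then conjugate."

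The paper's fix, which your sketch omits, is to introduce for each fixed $i$ the auxiliary homomorphisms
$\alpha_t^{(i)}(X)=U_s^{(i)}(U_{t+s}^{(i)})^*\,\alpha_{s+t}(X)\,U_{t+s}^{(i)}(U_s^{(i)})^*$,
driven by the conjugated Brownian motions $S^{(i,j)}$, in which the $i$-th subalgebra is frozen: $\alpha_t^{(i)}(a_i)=\alpha_s(a_i)$. In that frame the liberation gradient is an honest conditional expectation (no internal twist), $E_{\alpha_t^{(i)}(A)}[\overline{\xi}_{i,T-s}]=U_s^{(i)}(U_{t+s}^{(i)})^*\,\overline{\xi}_{i,T-s-t}\,U_{t+s}^{(i)}(U_s^{(i)})^*$, with the twist now \emph{outside}, so Proposition~\ref{AltForm}(3) applies verbatim. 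This is also the only place where the combination $\delta_i^{(i)}=\delta_i-\delta_B$ enters: the modified SDE for $W^{s,i}_{t,j}=U_s^{(i)}(U_{t+s}^{(i)})^*U_{t+s}^{(j)}$ is driven by $dS^{(i,j)}-dS^{(i,i)}$, which places $-\sum_{j\ne i}\delta_j$ in the $i$-th slot of the derivation. Your attempt to produce $(\delta_i-\delta_B)$ by "differentiating the twisted object through \eqref{hatdeltaLiberation}" has no mechanism to deliver this sign pattern, since \eqref{hatdeltaLiberation} merely relates $\hat\delta_{t,i}$ and $\hat\delta_{s,i}$ by a bimodule action of unitaries and does not mix the coordinates $i$ and $j\ne i$. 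Without the auxiliary conjugated process, neither the first assertion (membership of $j_{i,r}$ in the closure domain) nor the integrand in the inequality has a valid derivation in your write-up.
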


\begin{proof}
From \eqref{projLiberation} and proposition \ref{liberationC} , 
$\overline{\xi}_{i,T-t}=j_{i,t}=j(U_t^{(i)}A_iU_t^{(i)*}:U_t^{(1)}A_1U_t^{(1)*}\vee ... \widehat{U_t^{(i)}A_iU_t^{(i)*}}...\vee U_t^{(n)}A_nU_t^{(n)*}; B)=E_t[U_t^{(i)}U_s^{(i)*}\overline{\xi}_{i,T-s}U_s^{(i)}U_t^{(i)*}]$ and this description, not being a projection of an element of $\alpha_s(A)$, is not well suited for application of proposition \ref{AltForm}. We would prefer to project first and then conjugate. For, consider the homomorphisms on the same algebra as in proposition \ref{AltForm}  and with the same notation (same $\alpha_t,$  same $\delta$ etc.), defined as for $t, s\geq 0$ $\alpha_{t}^{(i)}(X)=U_{s}^i(U_{t+s}^i)^* U_{t+s}^j\alpha_0(a_j)(U_{t+s}^j)^*U_{t+s}^i(U_{s}^i)^*$, if $a_j\in B_j.$ 

Note that $W_{t,j}^{s,i}=U_{s}^i(U_{t+s}^i)^* U_{t+s}^j$ satisfies 
\begin{align*}W_{t,j}^{s,i}&=U_{s}^j-\int _s^{t+s}1_{i\neq j}W_{v,j}^{s,i}dv+i\int_s^{t+s}U_{s}^i(U_{v+s}^i)^*dS_{s+v}^{(j)}U_{v+s}^j-i\int_s^{t+s}U_{s}^i(U_{v+s}^i)^*dS_{s+v}^{(i)}U_{v+s}^j
\\&=U_{s}^j-\int _s^{t+s}1_{i\neq j}W_{v,j}^{s,i}dv+i\int_0^{t}dS_v^{(i,j)}W_{v,j}^{s,i}-i\int_0^{t}dS_{v}^{(i,i)}W_{v,j}^{s,i},\end{align*}
where $S_t^{(i,j)}=\int_s^tU_{s}^i(U_{v+s}^i)^*dS_{s+v}^jU_{v+s}^i(U_{s}^i)^*$ is easily seen to be a free Brownian motion  by Theorem \ref{FreeLevy}.

In this way $\alpha_{0}^{(i)}=\alpha_s$ of proposition \ref{AltForm} and this equality holds for all times on $B_i.$ If we define $\delta^{(i)}=(\delta^{(i)}_1,...,\delta^{(i)}_n)$ by $\delta^{(i)}_j=\delta_j$ if $j\neq i$, $\delta^{(i)}_i=-\sum_{j\neq i}\delta_j=\delta_i-\delta_B$ ($\delta_i$ are the derivation computed on the variables $\alpha_t^i(B_j)$ as in our general setting, $\delta_B(c)=-i(c\o 1-1\o c)$ if $c\in A_i$, $\delta_B(B)=0$, i.e. $\delta_B=\sum_j\delta_j$), then Ito formula shows $\alpha_{t}^{(i)}$ satisfy assumption (B) with as Brownian motion $S_t^{(i,j)}$ above and $Q=0$. It is easy to check that assumption (C) is satisfied. We work on $[0,T-s]$ for the time reversal for consistency with the original time reversal on $[0,T].$

Note that as in \eqref{projLiberation}, we have for $t>s$ $\overline{\xi}_{j,T-s-t}^i=E_{\alpha_{t}^i(A)}[W_{t,j}^{s,i}(U_{s}^j)^*\overline{\xi}_{i,T-s}U_{s}^j(W_{t,j}^{s,i})^*], j\neq i$ and 
$\overline{\xi}_{i,T-s-t}^i=-\sum_{j\neq i}E_{\alpha_{t}^i(A)}[W_{t,j}^{s,i}(U_{s}^j)^*\overline{\xi}_{i,T-s}U_{s}^j(W_{t,j}^{s,i})^*].$
But note that even if it does not appear in the time reversal of this process, since the $i$-th algebra does not change after time $s$, $\overline{\xi}_{i,T-t-s}=U_{t+s}^i(U_{s}^i)^*E_{\alpha_{t}^i(A)}[\overline{\xi}_{i,T-s}](U_{t+s}^i)^*U_{s}^i
=$
since $\alpha_t^i(X)=U_{s}^i(U_{t+s}^i)^* \alpha_{s+t}(X)U_{t+s}^i(U_{s}^i)^*.$

This is convenient for us since we can apply proposition \ref{AltForm} (3) to $E_{\alpha_{t}^i(A)}[\overline{\xi}_{i,T-s}],$ calling $\overline{S_t}^{(i,j)}$ the reversed Brownian motion and $M^{[0,T-s],i}_t(X)$ the extra martingale for $s<s+t<T$:

$E_{\alpha_{t}^i(A)}[\overline{\xi}_{i,T-s}]=E_{\alpha_{T-s}^i(A)}[\overline{\xi}_{i,T-s}]+\sum_j\int_0^t\delta_j^{(i)}(E_{\alpha_{T-s-u}^i(A)}[\overline{\xi}_{i,T-s}])\#d\overline{S_u}^{(i,j)}+M^{[0,T-s],i}_t(\overline{\xi}_{i,T-s}).$

we thus obtain the concluding inequality :

$$||\overline{\xi}_{i,T-t-s}||_2^2\geq ||\overline{\xi}_{i,0}||_2^2+\int_0^{t}||(\delta_i-\delta_B)\overline{\xi}_{i,T-s-u}||_2^2du+\sum_{j\neq i}\int_0^{t}||\delta_j\overline{\xi}_{i,T-s-u}||_2^2du$$

\end{proof}
\subsection{A computational application to liberated projections}
In the case of proposition \ref{liberationC} when $n=2$,$A_1=\C P+\C(1-P)$,$B=\C Q+\C(1-Q)$,  for $P,Q$ two projections with $\tau(P),\tau(Q)\leq 1/2$. One usually writes $\alpha_t(P)=P_t$ and then $QP_tQ=\alpha_t(QPQ)$ is called the operator valued angle of the liberated pair of projections and appears crucially in understanding for instance orbital entropy of the pair of algebras $A_1,B$. Let $r_t=P_t\wedge Q$. In \cite[section 1.4]{CollinsKemp}, the question is raised how to compute the left derivative of $F_T(s)=\tau((r_TP_sr_T-r_T)^2)$ at $s=T$, the right derivative is computed in their section 2.2 with the help of Ito's calculus. The computation of the backward process on $[0,T]$ as a free stochastic integral exactly enables this kind of computation. Recall the forward SDE special case of example \ref{liberation} : 
$$P_t=p+\int_0^t(\tau(P)-P_s)ds+i\int_0^t[dS_s,p_s]$$
Thus let $\overline{P}_t=P_{T-t}$ and $\overline{j}_t=j^*(\C \overline{P}_t+\C(1-\overline{P}_t):\C Q+\C(1-Q))$ so that using Proposition \ref{RevSDE} thanks to proposition \ref{liberationC}, our result states that :
$$\overline{P}_t=\overline{P}_0+\int_0^t(\tau(P)-\overline{P}_s-[\overline{P}_s,\overline{j}_{s} ])ds+i\int_0^t[d\overline{S}_s,\overline{P}_s].$$

(The reader should remember $i\overline{j}_{s}=\overline{\xi}_s$ to make the sign computation right.)

Then one can compute as in \cite[section 2.2]{CollinsKemp}, the derivative of  $\overline{F}_T(s)=F_T(T-s)=\tau(r_T-2r_T\overline{P}_s+r_T\overline{P}_sr_T\overline{P}_s)$ so that at any $s\geq 0$ :

\begin{align*}\frac{d\overline{F}_T}{ds}(s)&= -2[-\tau(r_T\overline{P}_s)+\tau(P)\tau(r_T)+\tau(r_T[\overline{j}_{s} ,\overline{P}_s])]\\&+2[-\tau((r_T\overline{P}_s)^2)+\tau(r_T\overline{P}_s)\tau(P)+\tau((r_T\overline{P}_sr_T[\overline{j}_{s} ,\overline{P}_s])])]\\&+[-2(\tau(r_T\overline{P}_s))^2+2\tau(r_T)\tau(r_T\overline{P}_s)] \\
&= 2\left((1+\tau(P))\tau(r_T\overline{P}_s) + \tau(r_T)[\tau(r_T\overline{P}_s)-\tau(P)] - [\tau((r_T\overline{P}_s)^2)+ (\tau(r_T\overline{P}_s))^2]\right)
\\&+ 2[\tau(r_T[\overline{j}_{s} ,\overline{P}_s])+\tau((r_T\overline{P}_sr_T[\overline{j}_{s} ,\overline{P}_s])]\end{align*}

Apart from a $(1+\tau(P))$ instead of a $2$ in \cite{CollinsKemp} that comes from a typo in their formula (2.27), the new terms are gathered on the last line. But at $s=0$ we have : $$(r_T[\overline{j}_{s} ,\overline{P}_s]r_T)=(r_T(\overline{j}_{0}P_T-P_T\overline{j}_0)r_T)=r_T(\overline{j}_{0}r_T-r_T\overline{j}_0)r_T=0$$ since  $r_Tp_T=r_T=r_Tp_T$ and thus the supplementary term vanish altogether. Thus whatever the derivative is, it is the same as the one for the forward process and in our computations, we have :
\begin{align*}\frac{d\overline{F}_T}{ds}(0)&=  2\left((1+\tau(P))\tau(r_T) + \tau(r_T)[\tau(r_T)-\tau(P)] - [\tau(r_T)+ \tau(r_T)^2]\right)
=0 \end{align*}
Since $\tau(r_T)=0$ anyways by \cite[lemma 12.5]{Vo6}, the computational mistakes are not such important but we find  $\frac{dF_T}{ds}$ is differentiable at $T$ with vanishing derivative.

\end{document}